\documentclass[11pt]{amsart}
\usepackage{enumerate,amsmath,amsthm,amsfonts,amssymb,mathrsfs,paralist,lscape}
\usepackage[usenames, dvipsnames]{color}
\usepackage[margin=1in]{geometry} 
\usepackage[colorlinks=true, pdfstartview=FitV, linkcolor=black, citecolor=black, urlcolor=black]{hyperref}

\usepackage{stmaryrd}
\usepackage{tikz}
\usetikzlibrary{matrix,arrows}
\usepackage{blkarray}

\numberwithin{equation}{section}
\newtheorem{theorem}[equation]{Theorem}

\newtheorem{proposition}[equation]{Proposition}
\newtheorem{lemma}[equation]{Lemma}
\newtheorem{corollary}[equation]{Corollary}
\newtheorem{question}[equation]{Question}

\theoremstyle{definition}
\newtheorem{definition}[equation]{Definition}

\newtheorem{example}[equation]{Example}
\newtheorem{remark}[equation]{Remark}
\newtheorem{convention}[equation]{Convention}

\newcommand{\red}[1]{\textcolor{red}{#1}}

\newcommand{\ZZ}{\mathbb{Z}}
\newcommand{\CC}{\mathbb{C}}
\newcommand{\bd}{\mathbf{d}}
\newcommand{\kk}{\Bbbk}
\renewcommand{\tilde}{\widetilde}
\newcommand{\ol}[1]{\overline{#1}}

\makeatletter
\def\Ddots{\mathinner{\mkern1mu\raise\p@
\vbox{\kern7\p@\hbox{.}}\mkern2mu
\raise4\p@\hbox{.}\mkern2mu\raise7\p@\hbox{.}\mkern1mu}}
\makeatother

\DeclareMathOperator{\rank}{rank}
\DeclareMathOperator{\Mat}{Mat}
\DeclareMathOperator{\srep}{srep}

\DeclareMathOperator{\rep}{rep}
\DeclareMathOperator{\im}{im}
\DeclareMathOperator{\Tor}{Tor}
\DeclareMathOperator{\Spec}{Spec}

\title{Equivariant geometry of symmetric quiver orbit closures}
\author{Ryan Kinser}
\address{Ryan Kinser \\ Department of Mathematics\\ University of Iowa \\ Iowa City, IA, USA }
\email{ryan-kinser@uiowa.edu}
\thanks{This work was supported by a grant from the Simons Foundation (636534, RK). This material is based upon work supported by the National Science Foundation under Award No. DMS-2303334.}

\author{Martina Lanini}
\address{Martina Lanini\\ Department of Mathematics\\ University of Rome ``Tor Vergata"\\ Via della Rcerca Scientifica 1, 00133 Rome, Italy}
\email{lanini@mat.uniroma2.it}
\thanks{M.L. acknowledges the 
PRIN2022 CUP E53D23005550006, and the MUR Excellence Department Project 2023--2027 awarded to the Department of Mathematics, University of Rome Tor Vergata, CUP E83C18000100006.}

\author{Jenna Rajchgot}
\address{Jenna Rajchgot\\ Department of Mathematics and Statistics\\ McMaster University\\ Hamilton, ON, Canada}
\email{rajchgoj@mcmaster.ca}
\thanks{J.R. was partially supported by NSERC Discovery Grants 2017-05732 and 2023-04800.}

\subjclass[2020]{16G20, 14M12, 13C40, 14M27}

\keywords{Symmetric quiver, symmetric variety, singularities, degeneration order, Bruhat order, equivariant K-theory}

\begin{document}

\begin{abstract}
We unify problems about the equivariant geometry of symmetric quiver representation varieties, in the finite type setting, with the corresponding problems for symmetric varieties $GL(n)/K$ where $K$ is an orthogonal or symplectic group. In particular, we translate results about singularities of orbit closures; combinatorics of orbit closure containment; and torus equivariant cohomology and K-theory between these classes of varieties. We obtain these results by constructing explicit embeddings with nice properties of homogeneous fiber bundles over type $A$ symmetric quiver representation varieties into symmetric varieties. 
\end{abstract}

\maketitle

\setcounter{tocdepth}{1}
\tableofcontents

\section{Introduction}
\subsection{Context and motivation}
Throughout the paper we work over an algebraically closed field $\kk$ which is not of characteristic 2.

This paper unifies problems about the equivariant geometry of symmetric quiver representation varieties, in Dynkin type $A$, 
with the corresponding problems for the symmetric varieties $GL(n)/K$ where $K=O(n)$ or $Sp(n)$.
The specific problems we consider include:
\begin{enumerate}[(i)]
    \item characterizing singularities of orbit closures;
    \item combinatorial models for the poset of orbit closures with respect to containment;
    \item study of classes of orbit closures in equivariant cohomology and $K$-theory. 
\end{enumerate}

Our work extends a series of investigations connecting ordinary quivers of Dynkin type $A$ with flag varieties $GL(n)/B$ over $\kk$ \cite{MS83,Zel85,LM98,BZ01,BZ02,KMS,KR15,KKR,XZ}, and ordinary quivers of Dynkin type $D$  with symmetric varieties $GL(a+b)/\left(GL(a) \times GL(b)\right)$ over $\kk$ \cite{BZ02,KR21},
to the setting of symmetric quivers.
In the particular case of $\kk=\CC$, the present paper completes the story of connections between symmetric quotients of $GL(n)$ and quiver representation varieties, since these are the only such symmetric quotients.
\begin{center}
\begin{tabular}{|c|c|}
    \hline
    \text{Family of symmetric quotient varieties }&\text{Family of quiver representation varieties}\\
    \hline
    $GL(n)/B$ & Dynkin type $A$\\
    $GL(a+b)/(GL(a)\times GL(b))$ & Dynkin type $D$\\
    $GL(n)/O(n)$ & symmetric Dynkin type $A$ with $\epsilon = 1$\\
    $GL(n)/Sp(n)$ & symmetric Dynkin type $A$ with $\epsilon = -1$\\
    \hline
\end{tabular}
\end{center}
Lusztig has also initiated investigation of a similar connection between varieties of nilpotents representations of cyclically oriented type $\widetilde{A}$ quivers and affine flag varieties
\cite{Lusztig81,Lusztig90,Magyar}.
We note that while there are some results on orbit closures in Dynkin type $E$ quiver representation varieties \cite{Sutar15, Zwara05, Lor21}, there is no known analogue of our work for them yet.

Symmetric quivers and their representations were introduced by Derksen and Weyman in \cite[\S2]{DW02}.
Related notions and applications have appeared in \cite{Zub05,Shmel06,Enomoto09,Bock10,Sam12,Aragona12,Aragona13,BC21,BC21b,LSvdB,RRT,Ciliberti,BCIFF,Reineke}.
Roughly, a symmetric quiver is a quiver equipped with the additional data of an orientation \emph{reversing} involution, and a sign function on vertices and (unoriented) edges fixed by the involution.  
Representations of a symmetric quiver are representations of the underlying quiver which respect this additional data in an appropriate sense; in the setting of this paper, it will amount to restricting certain matrix spaces to (skew-)symmetric matrices, and certain base change groups to orthogonal or symplectic subgroups.
See Section \ref{sec:symquivandrep} for a detailed recollection.


We restrict our attention to symmetric quivers of Dynkin type $A$, 
since they are the only ones whose representation varieties all have finitely many orbits by \cite[Theorem 3.1]{DW02}.
In this setting, the involution is uniquely determined, so the additional data making a type $A$ quiver into a symmetric quiver is just a sign $\epsilon=\pm 1$ (see \S\ref{sec:symreps} for more detail).  Thus we will denote a type $A$ symmetric quiver by $Q^\epsilon$, and for a (symmetric) dimension vector $\bd$, we denote the associated symmetric representation space by $\srep_{Q}^\epsilon(\bd)$ (see \S\ref{sec:symrepvarieties}).

\subsection{Summary of results}
Our main results are summarized in the following theorem, retaining the notation from above.
We typically omit the underlying field $\kk$ from the notation.

\begin{theorem}\label{thm:mainTheorem}
Given a symmetric type $A$ quiver $Q^\epsilon$ with dimension vector $\bd$, there is an associated $G = GL(n)$, a symmetric subgroup 
\[
K_\epsilon\simeq\begin{cases}
O(n) & \text{if }\epsilon =1, \\
Sp(n) & \text{if }\epsilon =-1,
\end{cases}
\]
and parabolic subgroup $P\subseteq G$ such that all the following hold.
\begin{enumerate}[(i)]
    \item There is an injective, order preserving map of partially ordered sets
 \begin{equation}\label{eq:reptoKG}
\begin{split}
\left\{\begin{tabular}{c} $G(\bd)$-orbit closures\\ in $\srep_{Q}^\epsilon(\bd)$\end{tabular} \right\}
& \hookrightarrow
\left\{\begin{tabular}{c} $P$-orbit closures\\ in $G/K_\epsilon$ \end{tabular} \right\}
\end{split}
\end{equation}
which we denote by $\mathscr{O}\mapsto \mathscr{O}^\dagger$ below.
\item There is a smooth affine variety $X$ and a smooth morphism  
\begin{equation}
\varphi: X \times \srep_{Q}^\epsilon(\bd)\rightarrow \srep_{Q}^\epsilon(\bd)^\dagger\subset G/K_\epsilon    
\end{equation}
which restricts to a smooth morphism 
\begin{equation}
        X \times \mathscr{O} \rightarrow \mathscr{O}^\dagger
\end{equation}
for each $\mathscr{O}$ as above. So, any smooth equivalence class of singularity occurring in $\mathscr{O}$ also occurs in $\mathscr{O}^\dagger$. 
\item There is a homomorphism of equivariant Grothendieck groups
\[
K_T(G/K_\epsilon) \to {K}_{T(\bd)}(\srep_{Q}^\epsilon(\bd))
\]
sending the class of the structure sheaf $[\mathcal{O}_{\mathscr{O}^\dagger}]$ to $[\mathcal{O}_\mathscr{O}]$ whenever $\mathscr{O}^\dagger$ is Cohen-Macaulay.
Here, $T$ is the maximal torus of diagonal matrices in $G$ and $T(\bd)$ is the maximal torus of matrices which are diagonal in each factor of $G(\bd)$. 
\item There is a homomorphism of equivariant Chow groups
\[
 A_T(G/K_\epsilon) \to A_{T(\bd)}(\srep_{Q}^\epsilon(\bd))
\]
sending the class $[{\mathscr{O}^\dagger}]$ to $[\mathscr{O}]$ (regardless of whether or not $\mathscr{O}^\dagger$ is Cohen-Macaulay).
\end{enumerate}
\end{theorem}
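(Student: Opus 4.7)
The plan is to construct an explicit block-matrix map $\varphi$ in the spirit of the Zelevinsky-type constructions from \cite{Zel85,KR15,KR21}, adapted to the symmetric setting, and to deduce all four parts of the theorem from it in a uniform way. First I would fix $n = \sum_{i \in Q_0} \bd_i$ and let $P \subseteq GL(n) = G$ be the stabilizer of the standard partial flag in $\kk^n$ whose graded pieces have dimensions $\bd_i$ indexed along the linear type $A$ quiver. After identifying $G/K_\epsilon$ with the variety of invertible $\epsilon$-symmetric $n \times n$ matrices via $gK_\epsilon \mapsto gJ_\epsilon g^T$ for a suitable reference form $J_\epsilon$, the $P$-action on $G/K_\epsilon$ becomes $M \mapsto p M p^T$. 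I would define $\varphi$ by placing the representation matrices $f_\alpha$ of a symmetric representation into superdiagonal blocks of an $n \times n$ matrix, their appropriately signed transposes into subdiagonal blocks so that the symmetric quiver axioms translate directly into $\epsilon$-symmetry of the matrix, and using parameters from a smooth affine variety $X$ to fill the remaining block positions so that the result is invertible and the image saturates a $P$-orbit. The variety $\srep_Q^\epsilon(\bd)^\dagger$ is then defined as the Zariski closure of the image of $\varphi$.

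With $\varphi$ constructed, the next step is to verify that it is $G(\bd)$-equivariant along a natural embedding $G(\bd) \hookrightarrow P$ and that its differential is surjective everywhere; the latter should follow from a direct dimension count showing that the $X$-directions together with the $G(\bd)$-directions span the full $P$-orbit direction at the image of any point. This gives smoothness of $\varphi$ and, by restriction, smoothness of $X \times \mathscr{O} \to \mathscr{O}^\dagger$, yielding part (ii); preservation of smooth equivalence classes of singularities then follows automatically. For part (i), the scheme-theoretic identity $\varphi^{-1}(\mathscr{O}^\dagger) = X \times \mathscr{O}$, a consequence of smoothness and our choice of $X$, translates containment and closure relations between $G(\bd)$-orbit closures directly into the same relations between the corresponding $P$-orbit closures, giving injectivity and order preservation of $\mathscr{O} \mapsto \mathscr{O}^\dagger$.

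For parts (iii) and (iv), the composition of smooth pullback $\varphi^*$ with the projection $X \times \srep_Q^\epsilon(\bd) \to \srep_Q^\epsilon(\bd)$, which induces an isomorphism on equivariant Chow groups and equivariant $K$-theory because $X$ is a smooth affine $T(\bd)$-variety with a linear action, produces the desired maps. The Chow pullback carries fundamental classes to fundamental classes unconditionally, giving (iv). In $K$-theory, flatness of $\varphi$ ensures $\varphi^*[\mathcal{O}_{\mathscr{O}^\dagger}] = [\mathcal{O}_{\varphi^{-1}(\mathscr{O}^\dagger)}]$, and the Cohen--Macaulay hypothesis on $\mathscr{O}^\dagger$ is what allows this to be identified cleanly with $[\mathcal{O}_{X \times \mathscr{O}}]$ and then descended to $[\mathcal{O}_\mathscr{O}]$ via the projection isomorphism.

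The main obstacle will be the construction of $\varphi$ itself with all required properties in place: realizing the arrow-reversing involution of $Q^\epsilon$ as matrix transposition, handling self-dual vertices (where the factor of $G(\bd)$ is orthogonal or symplectic rather than general linear) so that the corresponding diagonal blocks inherit the correct internal symmetry, and choosing $X$ just large enough to make $\varphi$ smooth without inflating the parameter count. Once this combinatorial linear-algebraic core is set up, the orbit correspondence, singularity transfer, and equivariant Chow and $K$-theory statements all follow from standard principles on smooth morphisms and equivariant homotopy invariance.
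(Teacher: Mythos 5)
Your overall architecture (a Zelevinsky-style block-matrix embedding, a smooth morphism from $X\times\srep_Q^\epsilon(\bd)$, pullbacks in equivariant $K$-theory and Chow groups) is the same as the paper's, but two steps that you treat as automatic are in fact the technical heart, and as written they do not go through. First, the scheme-theoretic identity $\varphi^{-1}(\mathscr{O}^\dagger)=X\times\mathscr{O}$ is not ``a consequence of smoothness and the choice of $X$.'' Smoothness of the ambient map only tells you the preimage is a union of smooth-locally-trivial pieces; it does not prevent $\varphi^{-1}(\mathscr{O}^\dagger)$ from containing additional orbit closures or from being non-reduced, and without this identity neither injectivity of $\mathscr{O}\mapsto\mathscr{O}^\dagger$ nor smoothness of the restricted maps $X\times\mathscr{O}\to\mathscr{O}^\dagger$ follows. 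What is actually needed is: (a) that $P$-orbit closures in $\Mat_\epsilon^\circ(2d,2d)$ are cut out by block northwest rank conditions (Proposition \ref{prop:POrbitNWRanks}, whose proof is not formal -- the $\epsilon=+1$, $i=j$ case requires a non-permutation group element); (b) that these ranks match the quiver rank functions under the block-matrix map, so distinct $G(\bd)$-orbits land in distinct $P$-orbits (Lemma \ref{lem:NWandquiverranks} together with Theorem \ref{thm:DWorbits}); and (c) that the intersection of a $P$-orbit closure with the image slice is reduced \emph{and irreducible}. Reducedness comes from the Mars--Springer slice theorem (Lemma \ref{lem:MSLemma}), which your dimension count does not replace; irreducibility is genuinely delicate, since Mars--Springer slices for $K=O(2d)$ can be reducible, and the paper's Lemma \ref{lem:orbitcapSirred} must invoke the Boos--Cerulli Irelli comparison of symmetric and ordinary degeneration orders to exclude extra components when $\epsilon=+1$. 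Your proposal contains no substitute for any of (a)--(c). Also, for a non-bipartite orientation the superdiagonal placement does not directly make sense; the paper first reduces to the bipartite case through a homogeneous fiber bundle (Proposition \ref{prop:bipartitereduction}), and that reduction is also what produces the smooth affine factor $G(\widetilde{\bd})/G(\bd)$ inside $X$.

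Second, your argument for (iii) and (iv) inverts the pullback along the projection $X\times\srep_Q^\epsilon(\bd)\to\srep_Q^\epsilon(\bd)$, claiming it is an isomorphism on equivariant $K$-theory and Chow groups ``because $X$ is a smooth affine variety with a linear action.'' Homotopy invariance requires $X$ to be an affine space (or a torsor under an equivariant vector bundle), not merely smooth affine: the natural $X$ here is $B_-\times(G(\widetilde{\bd})/G(\bd))$, which contains a torus and quotients isomorphic to spaces of invertible (skew-)symmetric matrices, and for such $X$ the projection does not induce an isomorphism on $K_T$ or on equivariant Chow groups, so the inversion is unjustified. The paper instead descends by restricting along sections and regular embeddings ($Y_d^\epsilon\hookrightarrow B_-\times Y_d^\epsilon$, the vector-bundle section $\zeta^\epsilon$, and $\srep_Q(\bd)\hookrightarrow\mathscr{U}$); those restrictions are not flat, and that is precisely where the Cohen--Macaulay hypothesis is used, through the Tor-vanishing Lemma \ref{lem:CMthing}. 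In your sketch the CM hypothesis is attached to the flat pullback step, where it is not needed, and is absent where it is essential, so parts (iii) and (iv) do not follow as written.
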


Theorem \ref{thm:mainTheorem} is proved in Section \ref{sec:mainThmProof}, utilizing additional techniques beyond those in our previous work in the ordinary types $A$ and $D$ setting.

For ordinary representations of Dynkin quivers of types $A$ and $D$,
it is known that orbit closures are always normal and Cohen-Macaulay \cite{Brion01,BZ01,BZ02}. 
The case of type $E$ quivers is open, with some things known about orbit closure singularities \cite{Bon94,Zwara05a,Zwara05,Zwara11,RZ13,Sutar15,LW19,KL22,Lor21}. Not much is known concerning singularities of symmetric quiver orbit closures beyond the cases of $A_2$ and $A_3$. 
Type $A_2$ is classical and well-understood; these varieties are set-theoretically defined by fixed-size minors of a symmetric or skew-symmetric matrix. In type $A_3$ symmetric quiver orbit closures are also certain orthogonal and symplectic analogues of determinantal ideals. These varieties are studied in \cite{LOVETT2005416, MR2309889}, and more recently in \cite{Lorincz}. As noted in the introduction of \cite{Lorincz}, some special cases of $A_3$ symmetric quiver orbit closures have also been of interest in the commutative algebra community as nullcones of certain natural actions  \cite{KS14, HJPS23, PTW23}.

Combining our main theorem with
results of Brion in \cite{Brion01,Brion03}
we obtain the following corollary, which appears as Corollary \ref{cor:sing} in the main body of the paper:
\begin{corollary}
In the case $\epsilon=-1$ of Theorem \ref{thm:mainTheorem}, symmetric quiver orbit closures are normal and Cohen-Macaulay. Under the additional assumption that the characteristic of the field $\kk$ is $0$, symmetric quiver orbit closures have rational singularities.
\end{corollary}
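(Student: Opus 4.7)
The plan is to transfer normality, Cohen--Macaulayness, and, in characteristic zero, rational singularities from the $P$-orbit closures $\mathscr{O}^\dagger \subseteq G/K_\epsilon$ down to the symmetric quiver orbit closures $\mathscr{O} \subseteq \srep_Q^\epsilon(\bd)$, by applying the smooth morphism produced in part (ii) of Theorem \ref{thm:mainTheorem}.

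The first step would be to invoke Brion's theorems from \cite{Brion01,Brion03}: since $\epsilon = -1$ forces $K_\epsilon = Sp(n)$, all $P$-orbit closures in the symmetric variety $GL(n)/Sp(n)$ are normal and Cohen--Macaulay in arbitrary characteristic $\neq 2$, and have rational singularities when $\mathrm{char}(\kk) = 0$. This is the deep geometric input to the argument; the orthogonal case $\epsilon=1$ is genuinely worse-behaved on the symmetric variety side, which is why the corollary is restricted to $\epsilon=-1$.

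The remainder is a standard smooth-morphism transfer in two steps. Theorem \ref{thm:mainTheorem}(ii) supplies a smooth morphism $\varphi \colon X \times \mathscr{O} \to \mathscr{O}^\dagger$, which is flat with geometrically regular fibers; hence each of the three properties above pulls back from $\mathscr{O}^\dagger$ to $X \times \mathscr{O}$. I would then descend along the second projection $\pi_2 \colon X \times \mathscr{O} \to \mathscr{O}$: because $X$ is a nonempty smooth (in particular irreducible) affine variety, $\pi_2$ is smooth and surjective, hence faithfully flat with smooth fibers. Faithfully flat descent preserves normality and Cohen--Macaulayness, and smooth surjective morphisms descend rational singularities in characteristic zero, yielding all three conclusions for $\mathscr{O}$.

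There is no substantive obstacle at this stage of the paper: both inputs—Brion's theorems and the smooth morphism supplied by Theorem \ref{thm:mainTheorem}(ii)—are already in place, and the corollary is really a packaging statement recording the payoff of the main theorem. All of the genuine content has been concentrated in the construction of the smooth morphism $\varphi$.
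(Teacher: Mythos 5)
Your approach is correct and is essentially the paper's: the paper likewise feeds Brion's results into the smooth-transfer statement of Theorem \ref{thm:mainTheorem}(ii) (packaged there as $\mathrm{Sing}(\mathbb{A}_-)=\mathrm{Sing}(GL/Sp)$, Theorem \ref{thm:sing}), and your explicit two-step argument --- pull back normality, Cohen--Macaulayness, and rational singularities along the smooth map $\varphi\colon X\times\mathscr{O}\to\mathscr{O}^\dagger$, then descend along the smooth surjective projection $X\times\mathscr{O}\to\mathscr{O}$ --- is exactly what the paper's phrase ``smooth equivalence class of singularity'' encodes. The only point you gloss over is your first step: Brion's papers do not state a theorem about $B_-$- or $P$-orbit closures in $GL(2d)/Sp(2d)$ in arbitrary characteristic. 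The paper first notes that the singularities of $B_-$-orbit closures in $G/K$ coincide with those of $Sp(2d)$-orbit closures in the flag variety $GL(2d)/B_-$, then quotes \cite[Corollary 2]{Brion01} to see that the latter are multiplicity free --- inserting the remark that multiplicity-freeness depends only on the orbit poset and is therefore independent of the characteristic, since \cite{Brion01} works over $\CC$ --- and only then applies \cite[Theorem 2]{Brion03} for normality and Cohen--Macaulayness and \cite[Remark 3]{Brion03} (together with \cite[Corollary 2]{Brion01} again) for rational singularities in characteristic $0$. So the statement you attribute directly to Brion is true, but it requires this short bridging argument, which constitutes most of the paper's written proof and should appear in yours.
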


We also obtain a combinatorial model for the poset of orbit closures. 
For ordinary representations of Dynkin quivers of type $A$ of any orientation, a combinatorial model was given in \cite{KR15} using a subposet of the symmetric group $\mathfrak{S}_n$ with respect to Bruhat order.  This model generalized the one of Zelevinsky for equioriented type $A$ quivers \cite{Zel85}.
For Dynkin quivers of type $D$, a combinatorial model was given in \cite{KR21} using a subposet of clans \cite{MO88,Yama97,Wyser16}.
Clan posets can also be encoded as signed involutions, though the partial order is not the Bruhat order in that encoding.

In Section \ref{sec:zperms}, we introduce the symmetric Zelevinsky permutation $v^\epsilon(W)$ associated to a symmetric type $A$ quiver representation $W$ (Definition \ref{def:Zperm}).  Fixing $Q^\epsilon$ and $\bd$, we can think of $v^\epsilon(\cdot)$ as a map from the set of orbit closures in $\srep_{Q}^\epsilon(\bd)$ to the symmetric group $\mathfrak{S}_{2d}$, where $d$ is the total dimension of $\bd$.
We prove the following theorem, retaining the notation of Theorem \ref{thm:mainTheorem}. See Section \ref{sec:zperms} for the proof of this result. 

\begin{theorem}\label{thm:combin}
The map $v^\epsilon(\cdot)$ defines an order-reversing 
injective map from 
the poset of orbit closures in $\srep_{Q}^\epsilon(\bd)$
to the symmetric group $\mathfrak{S}_{2d}$ with Bruhat order.

The image of this map $\mathcal{A}_\epsilon \subset \mathfrak{S}_{2d}$ is explicitly given in \S\ref{sec:Zpermoverview} for $Q$ bipartite, and for general $Q$ the image will be an upper order ideal of the image for the associated bipartite quiver (as in \S\ref{sec:bipartitereduction}).
\end{theorem}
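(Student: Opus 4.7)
The plan is to prove Theorem \ref{thm:combin} by a reduction--then--combinatorics strategy: first reduce from a general symmetric type $A$ quiver $Q^\epsilon$ to the bipartite case, then establish the bipartite statement by a direct rank-based argument. The underlying input is that orbits of $G(\bd)$ on $\srep_Q^\epsilon(\bd)$ are classified by rank data on all admissible compositions of arrows together with the compatibilities imposed by the symmetric structure; this is the symmetric analogue of the Abeasis--del Fra classification and is established in \cite{DW02}.

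For the bipartite reduction, I would use the unfolding of $Q^\epsilon$ into an associated bipartite symmetric quiver $\tilde Q^\epsilon$ as in \S\ref{sec:bipartitereduction}, which identifies orbits in $\srep_Q^\epsilon(\bd)$ with a closed subset of orbits in $\srep_{\tilde Q}^\epsilon(\tilde\bd)$ for a suitable symmetric $\tilde\bd$. The ``extra'' arrows in $\tilde Q$ absent from $Q$ correspond to compositions in $Q$, and the image of $\srep_Q^\epsilon(\bd)$ consists precisely of those bipartite orbits on which the extra-arrow matrices have maximal rank (i.e., are identities after a suitable change of basis). Under the explicit description of $v^\epsilon$ in \S\ref{sec:Zpermoverview}, maximal-rank conditions translate into upper bounds in Bruhat order on the corresponding row--column blocks, so the image of $v^\epsilon(\cdot)$ for $Q$ is an upper order ideal inside the image for $\tilde Q$, as claimed.

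For the bipartite case itself, the symmetric Zelevinsky permutation $v^\epsilon(W)$ is defined so that its permutation matrix, placed inside a $2d\times 2d$ grid subdivided by the blocks of $\bd$ and its symmetric counterpart, has leading-submatrix ranks that agree (up to a fixed combinatorial shift) with the rank data of $W$. Injectivity of $v^\epsilon$ on orbits then follows from the rank-data classification of orbits. The order-reversing property is a consequence of Fulton's characterization of Bruhat order by rank inequalities on permutation matrices, combined with the fact that the construction reverses the direction of dominance: a deeper orbit in the degeneration order has larger representation ranks, which produces smaller rank-matrix entries $r_{v^\epsilon(W)}(i,j)=\#\{k\leq i : v^\epsilon(W)(k)\leq j\}$, and therefore a Bruhat-smaller permutation.

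The main obstacle will be verifying that the image $\mathcal{A}_\epsilon$ is exactly the subset of $\mathfrak{S}_{2d}$ prescribed in \S\ref{sec:Zpermoverview}, rather than merely contained in it. The sign $\epsilon=\pm 1$ enters through the (skew-)symmetric pairing used to double the dimension from $d$ to $2d$; this pairing imposes an involutive or twisted-involutive constraint on the permutation matrix that must be simultaneously compatible with the block-shape coming from $\bd$ and with the rank profiles that are actually realizable by symmetric representations. Matching the set of realizable rank profiles bijectively to the prescribed subset $\mathcal{A}_\epsilon$ is the most delicate step, and is handled by checking that each candidate permutation satisfying the symmetry and shape constraints can be explicitly realized by a symmetric representation built block-by-block from its rank profile.
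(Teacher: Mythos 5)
Your overall architecture (reduce to the bipartite case via the homogeneous fiber bundle construction of \S\ref{sec:bipartitereduction}, then argue combinatorially with northwest rank conditions) is reasonable, and the reduction step matches what the paper does in Remark \ref{rem:posetmapallorientations} (note only that the orbits of the original quiver correspond to the bipartite orbits meeting the \emph{open} locus where the contracted-arrow matrices are invertible, i.e.\ an upper order ideal, not a closed subset of orbits). Your bipartite argument, however, is genuinely different from the paper's: the paper does not deduce order-reversal and injectivity from Fulton's criterion applied directly to $v^\epsilon(W)$, but from the geometric identification $\mathscr{O}^\ddagger=\overline{B_-\cdot[v^\epsilon(\mathscr{O})]_\epsilon}$ (Theorem \ref{thm:mainZperm1}) together with the Richardson--Springer order-reversing bijection between $B_-$-orbit closures in $\Mat_\epsilon^\circ(2d,2d)$ and (fixed-point-free) involutions (Proposition \ref{prop:BOrbitsMatrixSpaceInvolutions}). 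For $\epsilon=+1$ your more elementary route essentially works, since $v^\epsilon(W)=v(W)$ and the theorem only asserts one implication, so semicontinuity of the interval ranks plus \cite{KR15} and Theorem \ref{thm:DWorbits} suffice.

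The genuine gap is the case $\epsilon=-1$. There $v^\epsilon(W)=\alpha(v(W))$ is a fixed-point-free involution whose northwest ranks do \emph{not} agree with those of $\zeta^\epsilon(W)$: they drop by $1$ at the diagonal positions affected by $\alpha$. Your phrase ``up to a fixed combinatorial shift'' hides exactly the point that needs proof: Fulton's rank criterion gives $v(\mathscr{O}_1)\geq v(\mathscr{O}_2)$ from containment of orbit closures, but it does not automatically give $\alpha(v(\mathscr{O}_1))\geq\alpha(v(\mathscr{O}_2))$, nor does it show that the rank conditions of $\alpha(v(W))$ cut out the correct locus. The paper supplies the missing mechanism by a parity argument: northwest corner ranks of skew-symmetric matrices are even, the diagonal ranks of $v(W)$ at the shifted spots are odd, and $v(W)$ lies in $\mathcal{J}$ (Lemma \ref{lem:onesPositions}), so the $\alpha$-shift changes nothing inside $\Mat_-^\circ(2d,2d)$ (Lemma \ref{lem:skewSymRankComparison}); without some such argument your order-reversal claim for $\epsilon=-1$ is unproved. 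Two further steps you assert but do not establish: that $v(W)$ is an involution at all (the paper proves this via the symmetry of block ranks, Proposition \ref{prop:symZperm123}), and the surjectivity onto $\mathcal{A}_\epsilon$, which requires the precise realizability criterion of Proposition \ref{prop:regtosym} -- a $\tau$-symmetric rank profile is realized by a symmetric representation, with the additional \emph{evenness} of $r_J$ on $\tau$-fixed intervals required exactly when $\epsilon=-1$; this evenness is what singles out $\mathcal{J}$ and hence $\mathcal{A}_-$, and ``building a representation block-by-block from its rank profile'' does not by itself identify or verify it.
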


We note that Boos and Cerulli Irelli studied orbit closure containment for symmetric type $A$ quivers in \cite{BC21, BC21b}.  They proved that the degeneration order on symmetric type $A$ quiver representations is induced by the degeneration order on standard type $A$ quiver representations.  In the $\epsilon=1$ case, we use one of their results in our proof, but all of our results in the $\epsilon=-1$ case are independent of their work.  In both cases, we provide a new explicit combinatorial model for the orbit closure poset in terms of Bruhat order on certain sets of involutions.

In \S\ref{sec:converse}, we also prove two theorems which are in some sense ``converse theorems'' to our main result, Theorem \ref{thm:mainTheorem}.
These are more straightforward, but we include them for completeness of the story.
If we interpret our main result as saying that solutions to problems (i), (ii), (iii) above for $G(\bd)$ acting on $\srep_{Q}^\epsilon(\bd)$ can be obtained by specializing solutions to the corresponding problems for an appropriate Borel subgroup $B$ acting on $GL(n)/K_\epsilon$, then we can summarize our converse theorems as follows:
Theorem \ref{thm:converse1} says that solutions to these problems for $K_\epsilon$ acting on $GL(n)/B$ can be obtained by specializing solutions for an appropriate $G(\bd)$ acting on $\srep_{Q}^\epsilon(\bd)$, while
Theorem \ref{thm:converse2} says that solutions to these problems for $B$ acting on spaces of (skew-)symmetric matrices can be obtained by specializing solutions for an appropriate $G(\bd)$ acting on $\srep_{Q}^\epsilon(\bd)$.  For $\epsilon=1$, on the symmetric variety side, work of Pin \cite{Pin01} gives a way of producing $B$-orbit closures which are neither normal nor Cohen-Macaulay, so we can apply the converse theorems to produce non normal and non Cohen-Macaulay symmetric quiver orbit closures. The first example of a non Cohen-Macaulay type $A_3$ symmetric quiver orbit closure appeared in work of Lovett \cite{LOVETT2005416}. In the recent work \cite{Lorincz}, Lorincz characterized when a type $A_3$ symmetric quiver orbit closure is normal and when it is Cohen-Macaulay. While we do not provide a characterization of which symmetric type $A$ quiver orbit closures are normal and which are Cohen-Macaulay, our constructions produces many examples beyond the $A_3$ setting.

\subsection*{Acknowledgements}
This project originated at the meeting \emph{Beyond Toric Geometry} which took place at Casa Mathem\'{a}tica Oaxaca (CMO) in 2017. We thank CMO for their hospitality.

We thank Jacopo Gandini for help with references on symmetric varieties, Zachary Hamaker for teaching us about symmetric and skew-symmetric matrix Schubert varieties, Patricia Klein for help with Lemma \ref{lem:CMthing}, and Jerzy Weyman for making us aware of \cite{LOVETT2005416}.

\section{Symmetric quivers and their representations}
\label{sec:symquivandrep}
Symmetric quivers and their representations were introduced by Derksen and Weyman in \cite{DW02}. In this section, we recall these concepts, mostly following \cite[\S2.2]{Sam12}.
All vector spaces, linear maps, and so forth are over $\kk$.

\subsection{Symmetric quivers}\label{sec:symquiv}
Fix a quiver $Q$ with vertex set $Q_0$ and arrow set $Q_1$. 
We write $ta$ and $ha$ for the \emph{tail} and \emph{head} of an arrow $ta \xrightarrow{a} ha$. 
An introduction to the theory of quiver representations can be found in the textbooks \cite{ASS06,Schiffler14,DW17}.

Let $\tau\colon Q \to Q$ be an orientation reversing involution. Formally, we have $\tau\colon Q_0 \to Q_0$ and $\tau\colon Q_1 \to Q_1$ satisfying $\tau(ha) = t \tau (a)$ and $\tau(ta) = h \tau (a)$ for all $a \in Q_1$.  
We denote the fixed vertices and arrows by $Q_0^\tau$ and $Q_1^\tau$ as usual, and choose a sign function $s\colon Q_0^\tau \cup Q_1^\tau \to \{\pm 1\}$.  
The triple $(Q, \tau, s)$ is called a \emph{symmetric quiver}, which we denote by just $Q$ since $\tau$ and $s$ are typically fixed.
We also choose a partition of each of the vertex and arrow sets
\begin{equation}\label{eq:partition}
Q_i = Q_i^+ \amalg Q_i^\tau \amalg Q_i^-, \qquad i=0,1
\end{equation}
which satisfies $\tau(Q_i^{\pm}) = Q_i^{\mp}$, and further use the sign function to partition the fixed point sets as
\begin{equation}
Q_i^{\tau \pm} = \{x \in Q_i^\tau \mid s(x)=\pm 1\}.
\end{equation}

\begin{convention}\label{con:A}
The quivers of interest in this paper are of Dynkin type $A$, thus their underlying graph each possesses a unique nontrivial involution.  When we reference a symmetric type $A$ quiver, it will always be understood that $\tau$ is this one without explicitly describing $\tau$.

Furthermore, we always assume that the partitions of vertices and arrows in \eqref{eq:partition} are chosen so \emph{positive arrows} (those in $Q_1^+$) have only positive, or fixed, vertices as their heads and tails.
This implies the similar statement where ``positive'' is replaced with ``negative'' is also true.
One may visualize the type $A$ quiver as being horizontally embedded on the page with the ``positive half'' on the left and ``negative half'' on the right. For example, if
\begin{equation}\label{eqn:expleQuiverA4}
Q =\vcenter{\hbox{ \begin{tikzpicture}[point/.style={shape=circle,fill=black,scale=.5pt,outer sep=3pt},>=latex]
   \node[outer sep=-2pt] (y0) at (-1,0) {$1$};
   \node[outer sep=-2pt] (x1) at (0,1) {$2$};
  \node[outer sep=-2pt] (y1) at (1,0) {$3$};
   \node[outer sep=-2pt] (x2) at (2,1) {$4$};
  \path[->]
  	(x1) edge (y0) 
	(x1) edge (y1)
  	(x2) edge (y1);
   \end{tikzpicture}}}
  \end{equation}
then we take $Q_0^+=\{1,2\}$, $Q_1^+=\{2\to 1\}$, $Q_0^-=\{3,4\}$, $Q_1^-=\{4\to 3\}$, and we have $Q_1^\tau=\{2\to 3\}$.
\end{convention}

With this convention, the only additional data that a Dynkin type $A$ symmetric quiver has beyond its underlying quiver is a sign $\epsilon \in \{\pm 1\}$, corresponding to the value of $s$ on the unique fixed arrow or fixed vertex of $\tau$. Thus we can denote a Dynkin type $A$ symmetric quiver by simply $Q^\epsilon$, even omitting the $\epsilon$ at times when its value is fixed.

\subsection{Symmetric quiver representations}\label{sec:symreps}
Let $Q=(Q,\tau,s)$ be a symmetric quiver.
A \emph{dimension vector} $\bd\colon Q_0 \to \mathbb{Z}_{\geq 0}$ is \emph{symmetric} if $\bd(z) = \bd(\tau (z))$ for all $z\in Q_0$.
A \emph{symmetric representation} $V$ of symmetric dimension vector $\bd$ is an assignment of a vector space $V_z$ of dimension $\bd(z)$ to each $z \in Q_0^+ \cup Q_0^\tau$.
Then the dual space $V_z^*$ is assigned to $\tau(z)$.
For this to make sense when $\tau(z)=z$, we fix a nondegenerate bilinear form on $V_z$, which gives an identification $J_z\colon V_z\xrightarrow{\sim} V_z^*$.
This bilinear form is required to be symmetric if $s(z)=+1$ and skew-symmetric if $s(z)=-1$.
Note that this makes $J_z^{-1} = J_z^* = \pm J_z$ where $s(z)=\pm 1$. 
For each $a \in Q_1$, we assign a linear map $V_a \colon V_{ta} \to V_{ha}$ satisfying:
\begin{enumerate}[(i)]
    \item $V_a = s(a)V_a^*$ for $a \in Q_1^\tau$ with $V_{ta} \cong V_{ta}^{**}$ the canonical isomorphism;
    \item $V_a=V_{\tau(a)}^*$ when both $ta, ha \notin Q_0^\tau$;
    \item for $a \in Q_1^+$ with $ha \in Q_0^\tau$, we have $V_{\tau(a)}=V_a^*J_{ha}$, corresponding to the identifications
    \begin{equation}
    V_{t\tau(a)}=V_{\tau(ha)}  =V_{ha} \xrightarrow{J_{ha}} V_{ha}^* \xrightarrow{V_a^*} V_{ta}^* = V_{\tau(ta)}=V_{h\tau(a)};
    \end{equation}
    \item for $a \in Q_1^+$ with $ta \in Q_0^\tau$, we have $V_{\tau(a)}=J_{ta}V_a^*$, corresponding to the identifications
    \begin{equation}
    V_{t\tau(a)}=V_{\tau(ha)}=V_{ha}^* \xrightarrow{V_a^*} V_{ta}^* \xrightarrow{J_{ta}} V_{ta} = V_{\tau(ta)}=V_{h\tau(a)}.
    \end{equation}
\end{enumerate}

\subsection{Symmetric representation varieties}\label{sec:symrepvarieties}
For an ordinary quiver $Q$ and dimension vector $\bd$ we have the \emph{representation space}
\begin{equation}
\rep_Q(\bd) := \prod_{a \in Q_1} \Mat(\bd(ha), \bd(ta)),
\end{equation}
where $\Mat(m, n)$ denotes the algebraic variety of matrices with $m$ rows, $n$ columns, and entries in $\kk$.  
Each $V=(V_a)_{a \in Q_1}$ in $\rep_Q(\bd)$ is a \emph{representation} of $Q$ where we have assigned the vector space $\kk^{\bd(z)}$ at each vertex (in particular with a standard basis), and each matrix $V_a$ maps column vectors in $\kk^{\bd(ta)}$ to column vectors in $\kk^{\bd(ha)}$ by left multiplication.  
 
There is a \emph{base change group} $GL(\bd) := \prod_{z \in Q_0} GL(\bd(z))$,
which acts on $\rep_Q(\bd)$. 
Here $GL(\bd(z))$ denotes the general linear group of invertible $\bd(z)\times \bd(z)$ matrices with entries in $\kk$.
Explicitly, if $g = (g_z)_{z\in Q_0}$ is an element of $GL(\bd)$, and $V = (V_a)_{a\in Q_1} \in \rep_Q(\bd)$, then the (left) action of $GL(\bd)$ on $\rep_Q(\bd)$ is given by
\begin{equation}\label{eq:gdotv}
g\cdot V = (g_{ha}V_a g_{ta}^{-1})_{a\in Q_1}.
\end{equation}
Two points $V, W \in \rep_Q(\bd)$ lie in the same $GL(\bd)$-orbit if and only if $V$ and $W$ are isomorphic as representations of $Q$.  

For a symmetric quiver $Q$ and a symmetric dimension vector $\bd$, we get the \emph{symmetric representation space} $\srep_Q(\bd)$ as follows.
Let $\Mat_{+}(n,n)$ be the space of $n\times n$ symmetric matrices and $\Mat_{-}(n,n)$ the space of $n\times n$ skew-symmetric matrices.  
For convenience in the notation we sometimes interpret subscripts of $1$ and $-1$ as the symbols $+$ and $-$, respectively.
For $z \in Q_0^\tau$, let $\Omega_z \in GL(\bd(z))$ be symmetric if $s(z)=1$ and skew-symmetric if $s(z)=-1$.
In both cases, define the subgroup
\begin{equation}\label{eq:Kz}
    K_z:=\{g \in GL(\bd(z))\mid g^t\Omega_z g = \Omega_z\},
\end{equation}
so $K_z$ is an orthogonal group if $s(z)=1$ and a symplectic group if $s(z)=-1$.

We can then define
\begin{equation}\label{eq:srep}
    \srep_Q(\bd) := \prod_{a \in Q_1^+} \Mat(\bd(ha), \bd(ta))\times 
    \prod_{a \in Q_1^{\tau}} \Mat_{s(a)}(\bd(ha), \bd(ta))
\end{equation}
with corresponding \emph{base change group}
\begin{equation}\label{eq:G}
    G(\bd) := \prod_{z \in Q_0^+} GL(\bd(z)) \times 
    \prod_{z \in Q_0^{\tau}} K_z.
\end{equation}
The action of $G(\bd)$ on $\srep_Q(\bd)$ is given by the same formula as \eqref{eq:gdotv}, except that the index $a$ ranges over $Q^+_1 \cup Q^\tau_1$, and $g_z$ should be interpreted as $g_{\tau(z)}^{-t}$ whenever $z \in Q_0^-$.
Following Convention \ref{con:A}, when $Q$ is type $A$ we sometimes write $\srep_Q^\epsilon(\bd)$.

While symmetric representations are defined basis-free in \S\ref{sec:symreps}, working with representation spaces involves fixing a basis at each vertex.  We now discuss choice of basis and the resulting relations between representation spaces and their symmetric counterparts.
We consider $G(\bd)$ as a closed subgroup of $GL(\bd)$ via the embedding
\begin{equation}\label{eq:GdGLd}
(g_z)_{z \in Q_0^+ \cup Q_0^\tau} \mapsto (h_w)_{w \in Q_0}, \qquad \text{where}\quad
    h_w := 
\begin{cases}
    g_w & \text{if }w  \in Q_0^+ \\
    g_w& \text{if }w  \in Q_0^\tau\\
    g_{\tau(w)}^{-t} & \text{if }w \in Q_0^-. 
\end{cases}
\end{equation}

Moreover, we consider the closed embedding $\srep_Q(\bd) \hookrightarrow \rep_Q(\bd)$
given by
\begin{equation}\label{eq:sreptorep}
(V_a)_{a \in Q_1^+ \cup Q_1^\tau} \mapsto (W_b)_{b \in Q_1}, \qquad \text{where}\quad
    W_b := 
\begin{cases}
    V_b & \text{if }b \in Q_1^+\\
    V_b= s(b)V_b^t & \text{if }b \in Q_1^\tau\\
    \Omega_{t\tau(b)}^{-1} V_{\tau(b)}^t \Omega_{h\tau(b)} & \text{if }b \in Q_1^-
\end{cases}
\end{equation}
where $\Omega_w$ is the matrix representing the fixed nondegenerate bilinear form on $V_w=\kk^{\bd(w)}$ and, for the purposes of this definition, we extend the notation $\Omega_z$ to all $z \in Q_0$ by taking $\Omega_z=1$ for $z \notin Q_0^\tau$.

\begin{example}\label{ex:runExStart}
Consider the quiver from \eqref{eqn:expleQuiverA4}, that is
\begin{equation*}
Q =\vcenter{\hbox{ \begin{tikzpicture}[point/.style={shape=circle,fill=black,scale=.5pt,outer sep=3pt},>=latex]
   \node[outer sep=-2pt] (y0) at (-1,0) {$1$};
   \node[outer sep=-2pt] (x1) at (0,1) {$2$};
  \node[outer sep=-2pt] (y1) at (1,0) {$3$};
   \node[outer sep=-2pt] (x2) at (2,1) {$4$};
 
  \path[->]
  	(x1) edge (y0) 
	(x1) edge (y1)
  	(x2) edge (y1);
   \end{tikzpicture}}}.
  \end{equation*}
If $\tau:Q\to Q$ is the unique non trivial involution of $Q$,
we have $Q_0^\tau=\emptyset$, $Q_1^\tau=\{2\to 3\}$. Let us choose the sign $s(2\to 3)=-1$, so that the triple $(Q,\tau, s)$ is a symmetric quiver. 

We fix the symmetric dimension vector $\bd=(1,3,3,1)$. In this case, $\srep_Q(\bd)=\Mat(1,3)\times\Mat_-(3,3)$ and an example of a point in this representation space is 
\[
(A,B) = \left(\begin{bmatrix}1&0&0\end{bmatrix}, \begin{bmatrix}
   0&1&0\\ -1&0&0\\ 0&0&0
   \end{bmatrix}\right).
   \]
The image of this point under the embedding \eqref{eq:sreptorep} is
\[
(A,B,A^T)=\left(\begin{bmatrix}1&0&0\end{bmatrix}, \begin{bmatrix}
   0&1&0\\ -1&0&0\\ 0&0&0
   \end{bmatrix}, \begin{bmatrix}
   1\\0\\ 0
   \end{bmatrix}\right)
\]
which we graphically represent by
\[
W =\vcenter{\hbox{ \begin{tikzpicture}[point/.style={shape=circle,fill=black,scale=.5pt,outer sep=3pt},>=latex]
   \node[outer sep=-2pt] (y0) at (-1,0) {$\kk$};
   \node[outer sep=-2pt] (x1) at (0,1) {$\kk^3$};
  \node[outer sep=-2pt] (y1) at (1,0) {$\kk^3$};
   \node[outer sep=-2pt] (x2) at (2,1) {$\kk$};
 
  \path[->]
  	(x1) edge node[left] {$A$} (y0) 
	(x1) edge node[left] {$B$} (y1)
  	(x2) edge node[right] {$A^T$} (y1);
   \end{tikzpicture}}}.
\]
\end{example}

The following lemma can be verified by direct computation using the definitions above.

\begin{lemma}
The closed embedding $\srep_Q(\bd) \hookrightarrow \rep_Q(\bd)$ described in \eqref{eq:sreptorep} makes $\srep_Q(\bd)$ a $G(\bd)$-invariant closed subvariety of $\rep_Q(\bd)$, with respect to \eqref{eq:GdGLd}.
\end{lemma}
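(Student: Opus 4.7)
The plan is to verify the two assertions of the lemma separately: first, that \eqref{eq:sreptorep} realizes $\srep_Q(\bd)$ as a closed subvariety of $\rep_Q(\bd)$, and second, that this subvariety is $G(\bd)$-invariant under the embedding \eqref{eq:GdGLd}.

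For closedness, I would observe that the map in \eqref{eq:sreptorep} is injective since $V$ is recovered from $W$ via the identity on the $Q_1^+ \cup Q_1^\tau$ factors, and that its image is cut out by the explicit equations
\[
W_b = s(b)\, W_b^t \quad \text{for } b \in Q_1^\tau, \qquad W_b = \Omega_{t\tau(b)}^{-1}\, W_{\tau(b)}^t\, \Omega_{h\tau(b)} \quad \text{for } b \in Q_1^-.
\]
These conditions are polynomial (in fact linear) in the matrix entries of $W$, so the image is closed, and the map is an isomorphism onto its image.

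For $G(\bd)$-invariance, the critical ingredient is the relation $g^t \Omega_z g = \Omega_z$ defining $K_z$ in \eqref{eq:Kz}, which I would rewrite as $g^{-t} = \Omega_z g \Omega_z^{-1}$. Given $g \in G(\bd)$ with image $h \in GL(\bd)$, and $W$ lying in the image of \eqref{eq:sreptorep}, I need to check that $h \cdot W$ still satisfies both families of defining equations. I would unwind $(h \cdot W)_b = h_{hb} W_b h_{tb}^{-1}$ case by case on $b$. For $b \in Q_1^+$ there is no condition to verify. For $b \in Q_1^\tau$ in Dynkin type $A$, $hb$ and $tb$ are swapped by $\tau$ with one in $Q_0^+$ and the other in $Q_0^-$, so by \eqref{eq:GdGLd} the matrices $h_{hb}$ and $h_{tb}$ are related by inverse-transpose; combined with $W_b = s(b) W_b^t$, this immediately yields $(h \cdot W)_b = s(b)(h \cdot W)_b^t$. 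For $b \in Q_1^-$, setting $a = \tau(b) \in Q_1^+$, I would split into sub-cases depending on whether each of $ta, ha$ lies in $Q_0^+$ or $Q_0^\tau$, substitute the explicit form of $h_{tb}, h_{hb}, h_{ta}, h_{ha}$ from \eqref{eq:GdGLd}, and apply the identity $g^{-t} = \Omega_z g \Omega_z^{-1}$ at each factor indexed by a fixed vertex to match both sides of the duality equation.

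The main obstacle is purely the combinatorial bookkeeping across sub-cases; the algebra in each is a short chain of substitutions. Conceptually, nothing deep is happening: the embeddings \eqref{eq:GdGLd} and \eqref{eq:sreptorep} have been set up precisely so the defining conditions of the image intertwine with the $G(\bd)$-action, and the verification amounts to confirming that this design is internally consistent.
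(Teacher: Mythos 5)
Your proposal is correct and is exactly the argument the paper has in mind: the paper gives no written proof, stating only that the lemma ``can be verified by direct computation using the definitions above,'' and your computation (the image is the linear subspace cut out by $W_b=s(b)W_b^t$ and $W_b=\Omega_{t\tau(b)}^{-1}W_{\tau(b)}^t\Omega_{h\tau(b)}$, with invariance checked case by case via $g^{-t}=\Omega_z g\Omega_z^{-1}$ for $g\in K_z$) is precisely that direct verification. The case analysis you outline does close up as claimed, so there is no gap to report.
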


By definition, two points $V, W \in \srep_Q(\bd)$ lie in the same $G(\bd)$-orbit if and only if $V$ and $W$ are isomorphic as symmetric representations of $Q$.  The following theorem of Derksen and Weyman relates this to the ordinary quiver representation setting.

\begin{theorem}\cite[Thm.~2.6]{DW02}\label{thm:DWorbits}
Two points $V,W \in \srep_Q(\bd)$ lie in the same $G(\bd)$-orbit if and only if they lie in the same $GL(\bd)$-orbit.
\end{theorem}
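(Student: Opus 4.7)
\medskip

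\noindent\emph{Proof proposal.} The forward implication is immediate from the closed embedding $G(\bd) \hookrightarrow GL(\bd)$ of \eqref{eq:GdGLd}. For the converse, my plan is to exhibit an involutive group automorphism $\sigma$ of $GL(\bd)$ whose fixed subgroup is the image of $G(\bd)$ and which preserves the subvariety $\operatorname{Iso}_Q(V, W) \subseteq GL(\bd)$ of ordinary $Q$-representation isomorphisms $V \to W$; the task then reduces to showing that the fixed-point set $\operatorname{Iso}_Q(V, W)^\sigma$ is nonempty.

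For $h = (h_z)_{z \in Q_0} \in GL(\bd)$ I define
\[
\sigma(h)_z = \begin{cases} (h_{\tau(z)}^T)^{-1} & z \in Q_0^+ \cup Q_0^-, \\ \Omega_z^{-1}(h_z^T)^{-1}\Omega_z & z \in Q_0^\tau, \end{cases}
\]
with $\Omega_z$ as in \S\ref{sec:symrepvarieties}. A direct verification using $\Omega_z^T = s(z)\Omega_z$ at the fixed vertices shows that $\sigma$ is an involutive group automorphism of $GL(\bd)$, and comparing with \eqref{eq:GdGLd} and \eqref{eq:Kz} shows that its fixed subgroup is exactly the image of $G(\bd)$. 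To check that $\sigma$ preserves $\operatorname{Iso}_Q(V, W)$, I would introduce the compatible involution $\tilde\sigma$ of $\rep_Q(\bd)$ whose fixed locus is $\srep_Q(\bd)$ (given by the formulas in \eqref{eq:sreptorep}) and verify the intertwining identity $\tilde\sigma(h \cdot V) = \sigma(h) \cdot \tilde\sigma(V)$ for all $h \in GL(\bd)$ and $V \in \rep_Q(\bd)$, a case-by-case computation over arrow types using the relations (i)--(iv) of \S\ref{sec:symreps}. Given the intertwining identity, if $V, W \in \srep_Q(\bd)$ and $h \cdot V = W$, then $\sigma(h) \cdot V = \sigma(h) \cdot \tilde\sigma(V) = \tilde\sigma(h \cdot V) = \tilde\sigma(W) = W$, so $\sigma(h) \in \operatorname{Iso}_Q(V, W)$ as required.

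Given this setup, the fixed-point set $\operatorname{Iso}_Q(V, W)^\sigma$ is either empty or a torsor for $G(\bd) \cap \operatorname{Aut}_Q(W)$, and its nonemptiness is equivalent to the vanishing of a class in the nonabelian cohomology $H^1(\langle\sigma\rangle, \operatorname{Aut}_Q(W))$. To prove this vanishing, I would use the Krull--Schmidt decomposition of $W$ as an ordinary $Q$-representation to identify $\operatorname{Aut}_Q(W)$ with a product of general linear groups indexed by indecomposable isotypic components, and describe $\sigma$ on this product as a standard involution that swaps $\tau$-dual factors and acts as an orthogonal or symplectic involution on $\tau$-fixed isotypic components. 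The desired $H^1$-vanishing then follows from classical facts about these groups over an algebraically closed field of characteristic not $2$ (equivalently, from the fact that every nondegenerate symmetric or skew-symmetric form on a finite-dimensional vector space is unique up to change of basis). The main obstacle is making this reduction precise: tracking how the symmetric structure of $W$ interacts with its indecomposable decomposition to pin down the action of $\sigma$ on each factor of $\operatorname{Aut}_Q(W)$ is the technical heart of the argument, and is where the restriction to type $A$ (via Convention~\ref{con:A}) makes the bookkeeping tractable.
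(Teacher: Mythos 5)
First, note that the paper does not prove this statement at all: it is imported verbatim from Derksen--Weyman, cited as \cite[Thm.~2.6]{DW02}, so there is no in-paper argument to compare against; the relevant benchmark is the standard proof of that theorem. Your overall strategy (an involution $\sigma$ of $GL(\bd)$ with fixed subgroup the image of $G(\bd)$ under \eqref{eq:GdGLd}, a compatible involution $\tilde\sigma$ of $\rep_Q(\bd)$ with fixed locus the image of \eqref{eq:sreptorep}, and then a descent/nonabelian-$H^1$ argument for the torsor $\operatorname{Iso}_Q(V,W)$) is a legitimate route, and your $\sigma$ is correct: it is an involutive automorphism, and its fixed points are exactly \eqref{eq:GdGLd} by \eqref{eq:Kz}. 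Two caveats on the setup: the formulas of \eqref{eq:sreptorep} applied naively do not define an involution when a $\tau$-fixed vertex carries a skew form (one gets $\tilde\sigma^2=-\mathrm{id}$ on the adjacent arrow components unless a sign $s(z)$ is inserted on one side), so $\tilde\sigma$ needs this sign adjustment before the intertwining identity can hold; this is fixable but must be done.

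The genuine gap is in the $H^1$-vanishing step. You assert that Krull--Schmidt identifies $\operatorname{Aut}_Q(W)$ with a product of general linear groups indexed by isotypic components. That is false in general: $\operatorname{Aut}_Q(W)$ is the unit group of $\operatorname{End}_Q(W)$, and for type $A$ quivers there are typically nonzero homomorphisms between non-isomorphic indecomposable summands, so $\operatorname{Aut}_Q(W)$ is only an extension of $\prod_i GL_{m_i}$ by the unipotent group $1+\operatorname{rad}\operatorname{End}_Q(W)$. Your classical input (uniqueness of nondegenerate symmetric/skew forms over $\kk$ algebraically closed, $\operatorname{char}\kk\neq 2$) kills the $H^1$ of the reductive quotient, but to conclude for $\operatorname{Aut}_Q(W)$ itself you still need a d\'evissage along the ($\sigma$-stable) radical filtration, using $\operatorname{char}\kk\neq 2$ to show $H^1(\ZZ/2,\cdot)$ vanishes for the vector-group subquotients and for their twists; moreover the claim that the induced involution on the reductive quotient is the expected ``swap plus orthogonal/symplectic'' one requires choosing a $\sigma$-adapted isotypic decomposition, which is exactly the unaddressed technical heart you flag. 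So as written the proof is incomplete, though repairable. It is worth knowing that the standard argument (essentially Derksen--Weyman's) is shorter and avoids both Krull--Schmidt and cohomology: given an isomorphism $\varphi\colon V\to W$ of underlying representations, form $h=\varphi^{\sharp}\varphi\in\operatorname{Aut}_Q(V)$, where $\varphi^{\sharp}$ is the adjoint with respect to the two nondegenerate forms; $h$ is a self-adjoint automorphism of the representation, it admits a square root $s$ that is a polynomial in $h$ (here $\operatorname{char}\kk\neq 2$ and $\kk=\ol{\kk}$ are used), and $\varphi s^{-1}$ is then an isometry, i.e.\ an isomorphism of symmetric representations. That argument also works for arbitrary symmetric quivers, whereas your reduction leans on the type $A$ hypothesis only to make bookkeeping manageable.
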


\section{Symmetric varieties, Borel orbit closures, and Mars-Springer slices}\label{sec:symvarieties}
In this section we revise some results on symmetric varieties, focusing on symmetric homogeneous spaces for the action of $GL(2d)$.  Our main references are \cite{RS90, MS98, BH00}.
\subsection{Conventions}\label{subsec:Conventions}
Let $G=GL(2d)$ and let $\Omega \in G$ be symmetric or skew-symmetric.
Consider the involution
\begin{equation}\label{eq:theta}
\theta\colon G \to G, \qquad \theta(g) = \Omega^{-1} g^{-t} \Omega
\end{equation}
and let $K=G^{\theta}$. Therefore, if $\Omega$ is symmetric then $K$ is isomorphic to the group  $O(2d)$ of orthogonal matrices, while if $\Omega$ is skew-symmetric, then  $K$ is isomorphic to the symplectic group $Sp(2d)$ .

Observe that if $\kk=\mathbb{C}$ and $\theta$ is any involutive automorphism of $GL(2d)$, then by \cite[Chapter X, \S6]{Hel78} 
the fixed point group $G^{\theta}$ is isomorphic to one of the following: $O(2d)$, $Sp(2d)$, $GL(p)\times GL(q)$ with $p+q=2d$. The relation between symmetric spaces of the form $GL(m)/\left(GL(p)\times GL(q)\right)$ (with $p+q=m$, $m$ not necessarily even) and quiver representation varieties has been dealt with in \cite{KR21}. The remaining two cases are investigated in the present paper, that therefore completes the task of relating complex symmetric quotients of $GL(2d)$ to quiver representation theory.

\subsection{Mars-Springer slices}\label{subsec:MSslices} We recall here \cite[\S6]{MS98} in the setting $G=GL(2d)$. Let $\Omega, \theta$, and $K$ be as in \S\ref{subsec:Conventions}. From now on, we assume that $\theta$ stabilizes the Borel subgroup $B_+$ of upper triangular matrices and its maximal torus $T$ of diagonal matrices. 
It follows that the opposite Borel $B_-$ of lower triangular matrices and also the normalizer $N$ of $T$ in $G$ are stabilized by $\theta$, so that $\theta$ induces an automorphism of the Weyl group $\mathcal{W}=N/T$, which we denote by the same symbol. Moreover, $\theta$ induces an automorphism of the root system which stabilizes the set of simple roots.

We will be interested in  $B_-$-orbits on the symmetric variety $G/K$, and introduce the set
\begin{equation}
\mathcal{V}=\{x\in G\mid x\theta(x)^{-1}\in N\}.
\end{equation}
The left and right actions of $G$ on itself restrict to a left $T$-action and a right $K$-action on $\mathcal{V}$. We obtain a set bijection
\begin{equation}
T\setminus\mathcal{V}/K \rightarrow  B_-\!\setminus G/K, \qquad TxK\mapsto B_-xK.
\end{equation}
For any $x\in \mathcal{V}$, we define another involutive automorphism $\psi_x$ of $G$. Set $\eta_x:=x\theta(x)^{-1}$ and define
\begin{equation}\label{eqn:defPsi}
  \psi_x: G\rightarrow G, \qquad g\mapsto x\theta(x)^{-1}\theta(g)\theta(x)x^{-1},
\end{equation}
that is, $\psi_x$ is the composition of maps $\theta$ followed by left conjugation by $\eta_x$.
Let $\overline{x}=xK/K$ and let $U$ be the unipotent radical of $B_+$. The corresponding \emph{Mars-Springer slice} is
\begin{equation}\label{eq:Sxbar}
S_{\overline{x}}=(U\cap \psi_x(U))\overline{x}.
\end{equation}

\subsection{The Mars-Springer Lemma}
 We retain the conventions of \S\ref{subsec:Conventions}, and recall in this case the notion of an attractive slice to a $B_-$-orbit.
Let $Z$ be any irreducible variety acted upon by $B_-$ and let $z\in Z$. Following \cite[Definition 2.4]{WWY}, we define an attractive slice to the orbit  $B_-\cdot z$ at $z$ in $Z$ to be a locally closed subvariety $S$ of $Z$ having the following properties:
\begin{enumerate}[(i)]
    \item the restriction of the action map 
   $B_-\times S\rightarrow Z$ is a smooth morphism;
    \item $\dim(S)+\dim(B_-\cdot z)=\dim(Z)$;
    \item there is a one-parameter subgroup 
$\lambda:\mathbb{G}_m\rightarrow B_-$
    such that:
    \begin{enumerate}
        \item $Im(\lambda)$ stabilizes $S$ and fixes the point $z$,
        \item the action map $\mathbb{G}_m\times S\rightarrow S$ can be extended to a morphism of algebraic varieties $\mathbb{A}^1\times S\rightarrow S$ by sending $\{0\}\times S$ to $z$. 
    \end{enumerate}
\end{enumerate}

With this terminology, we recall the following key result of Mars and Springer.

\begin{lemma}(\cite[\S6.4]{MS98})
For $x\in\mathcal{V}$,
the variety $S_{\overline{x}}$ of \eqref{eq:Sxbar} is an attractive slice to 
$B_-xK/K$ 
at $\overline{x}$ in $G/K$.
\end{lemma}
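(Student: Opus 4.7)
My plan is to verify the three axioms of an attractive slice by passing to the tangent space at $\overline{x}$ and propagating the local picture by $B_-$-equivariance. The key structural fact I would use throughout is that $\psi_x$ is an involutive automorphism of $G$ stabilizing $T$, whose fixed subgroup $G^{\psi_x}$ has Lie algebra equal to $\mathrm{Ad}(x)\,\mathrm{Lie}(K)$. Consequently the tangent space to $G/K$ at $\overline{x}$ identifies canonically with the $(-1)$-eigenspace of $\psi_x$ on $\mathfrak{g}$, and since $\eta_x\in N$ and $\theta$ stabilizes $T$, the induced map $\psi_x$ on $\mathfrak{g}$ permutes the $T$-root spaces.

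For axioms (i) and (ii), the crux is the transversality of $B_-\overline{x}$ and $S_{\overline{x}}$ at $\overline{x}$ inside $G/K$. I would decompose $\mathfrak{g}$ into $T$-weight spaces and check root-by-root that the images of $\mathrm{Lie}(B_-)$ and $\mathrm{Lie}(U)\cap\psi_x(\mathrm{Lie}(U))$ together span $\mathfrak{g}/\mathrm{Ad}(x)\,\mathrm{Lie}(K)$: for $\alpha$ negative the root space sits in $\mathrm{Lie}(B_-)$; for $\alpha$ positive with $\psi_x(\alpha)$ also positive it sits in $\mathrm{Lie}(U)\cap\psi_x(\mathrm{Lie}(U))$; and for $\alpha$ positive with $\psi_x(\alpha)$ negative, any root vector $e_\alpha$ can be written as a $\psi_x$-fixed combination with $\psi_x(e_\alpha)\in\mathrm{Lie}(U^-)\subset\mathrm{Lie}(B_-)$, placing its class modulo $\mathrm{Ad}(x)\,\mathrm{Lie}(K)$ inside $\mathrm{Lie}(B_-)$. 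A matching dimension count, using that $\psi_x$ pairs up the ``mixed'' roots while fixing half the weights of each such pair, shows that the two tangent subspaces are in fact complementary in $T_{\overline{x}}(G/K)$. Combining this with $B_-$-equivariance of the action map gives smoothness of $B_-\times S_{\overline{x}}\to G/K$ globally, and the dimension equality (ii) is then automatic from the complementarity at $\overline{x}$.

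For axiom (iii) I would construct a cocharacter $\lambda\colon\mathbb{G}_m\to T$ that is fixed by $\psi_x$ and pairs strictly positively with every $\alpha\in\Phi^+\cap\psi_x(\Phi^+)$. The first condition forces $\lambda(t)\in T^{\psi_x}\subset G^{\psi_x}=xKx^{-1}$, so $\lambda(t)\cdot\overline{x}=\overline{x}$; the second makes conjugation by $\lambda(t)$ act by strictly positive weights on the Lie algebra of $U\cap\psi_x(U)$. Writing $\lambda(t)\cdot u\overline{x}=(\lambda(t)u\lambda(t)^{-1})\overline{x}$ then shows the $\mathbb{G}_m$-action on $S_{\overline{x}}$ extends to a morphism $\mathbb{A}^1\times S_{\overline{x}}\to S_{\overline{x}}$ collapsing $\{0\}\times S_{\overline{x}}$ to $\overline{x}$. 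Existence of such a $\lambda$ follows from observing that $B_+\cap\psi_x(B_+)$ is a Borel-type subgroup normalized by $T$ whose root set $\Phi^+\cap\psi_x(\Phi^+)$ is stable under $\psi_x$, so averaging any regular cocharacter of this subgroup over $\langle\psi_x\rangle$ lands in $X_*(T^{\psi_x})\otimes\mathbb{Q}$ while remaining strictly positive on each $\psi_x$-orbit of roots.

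The main obstacle I anticipate is the bookkeeping in the transversality step, specifically in the ``mixed'' case where $\psi_x$ swaps a positive root with a negative one; one must use the hypothesis $\mathrm{char}\,\kk\neq 2$ to split $\mathfrak{g}_\alpha\oplus\mathfrak{g}_{\psi_x(\alpha)}$ into $\pm1$ eigenspaces of equal dimension, and to argue that fixed roots (with $\psi_x(\alpha)=\alpha$ or $\psi_x(\alpha)=-\alpha$) do not obstruct the dimension count. Once this combinatorics of the $\psi_x$-action on $\Phi$ is in hand, everything else is equivariant propagation from the single point $\overline{x}$.
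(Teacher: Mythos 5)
You should first note that the paper does not prove this lemma at all: it is quoted verbatim from Mars--Springer \cite[\S 6.4]{MS98}, so the comparison here is with that external proof. Your outline reconstructs essentially that argument, and most of it is sound: $\psi_x$ is indeed an involution with $G^{\psi_x}=xKx^{-1}$, so $T_{\overline{x}}(G/K)$ is the $(-1)$-eigenspace of $d\psi_x$ (char $\neq 2$); your root-by-root case analysis does give $\mathfrak{g}=\mathrm{Lie}(B_-)+\bigl(\mathrm{Lie}(U)\cap\psi_x(\mathrm{Lie}(U))\bigr)+\mathrm{Ad}(x)\,\mathrm{Lie}(K)$, and a count of swapped pairs versus $\psi_x$-fixed roots confirms the dimensions add up, so the two tangent spaces are complementary at $\overline{x}$; and the cocharacter $\lambda=\mu+\psi_x\mu$, with $\mu$ strictly positive on $\Phi^+\cap\psi_x(\Phi^+)$ (a $\psi_x$-stable set), lands in $T^{\psi_x}\subset xKx^{-1}$ and contracts $U\cap\psi_x(U)$, which is what axiom (iii) needs.

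The genuine gap is the sentence asserting that smoothness of $B_-\times S_{\overline{x}}\to G/K$ \emph{globally} follows from the transversality at $\overline{x}$ ``combined with $B_-$-equivariance.'' Equivariance under $B_-$ (left translation on the first factor) only transports surjectivity of the differential from $(e,\overline{x})$ to points $(b,\overline{x})$; it says nothing about points $(b,s)$ with $s\neq\overline{x}$, and there is no a priori reason that $\mathrm{Lie}(B_-)\cdot s+T_sS_{\overline{x}}=T_s(G/K)$ for general $s\in S_{\overline{x}}$. The standard repair uses exactly the $\lambda$ you build in the next paragraph: the locus where the action map is smooth is open, contains $B_-\times\{\overline{x}\}$, and is invariant under the action $t\cdot(b,s)=(\lambda(t)b\lambda(t)^{-1},\lambda(t)\cdot s)$ (legitimate since $\lambda$ has image in $T$, hence normalizes $B_-$); because every $s\in S_{\overline{x}}$ flows to $\overline{x}$ as $t\to 0$, every point of $B_-\times S_{\overline{x}}$ lies in this locus. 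So the contraction is needed for axiom (i), not only for (iii), and your proof should be reordered accordingly. A smaller point: the orbit map $U\cap\psi_x(U)\to S_{\overline{x}}$ is in general not injective (e.g.\ for $\epsilon=+1$ in the paper's setting its $\psi_x$-fixed subgroup is positive-dimensional), so the formula $\lambda(t)\cdot u\overline{x}=(\lambda(t)u\lambda(t)^{-1})\overline{x}$ defines the extension $\mathbb{A}^1\times S_{\overline{x}}\to S_{\overline{x}}$ only after descending along this orbit map (which is fine, being a quotient by a smooth stabilizer, but deserves a word), and likewise the identification of $T_{\overline{x}}S_{\overline{x}}$ and of $\dim B_-\cdot\overline{x}$ with images of Lie algebras silently uses separability of the relevant orbit maps.
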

In this paper, we will deal with intersections of certain attractive slices with
$B_-$-orbit closures. The following result is
a consequence of the previous lemma
(see \cite[Lemma 2.6]{WWY} for a complete proof).
\begin{lemma}\label{lem:MSLemma} Let $x\in\mathcal{V}$ and let $Z$ be a 
 $B_-$-orbit closure in $G/K$ such that $\overline{x}\in Z$. Then the scheme theoretic intersection $Z\cap S_{\overline{x}}$ is reduced and $Z\cap S_{\overline{x}}$ is an attractive slice to
 $B_-xK$ at $\overline{x}$ in $Z$.
\end{lemma}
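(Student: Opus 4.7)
\smallskip

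\noindent\textbf{Proof plan.} The strategy is to pull back the $B_-$-invariant subscheme $Z \subseteq G/K$ through the smooth action map furnished by the Mars--Springer lemma, and then transfer the three axioms of an attractive slice from $S_{\overline{x}} \subseteq G/K$ to $Z \cap S_{\overline{x}} \subseteq Z$. Write $\mu : B_- \times S_{\overline{x}} \to G/K$ for the restricted action map; by the Mars--Springer lemma it is smooth. Because $Z$ is $B_-$-stable, a point $(b,s) \in B_- \times S_{\overline{x}}$ satisfies $\mu(b,s) = bs \in Z$ if and only if $s \in Z$. Thus the scheme-theoretic preimage is
\[
\mu^{-1}(Z) = B_- \times (Z \cap S_{\overline{x}}),
\]
and the induced morphism $\mu^{-1}(Z) \to Z$ is smooth and surjective as a pullback of $\mu$ along the inclusion $Z \hookrightarrow G/K$.

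Reducedness of $Z \cap S_{\overline{x}}$ follows because $Z$ is reduced (being an orbit closure), smooth morphisms preserve reducedness under pullback, and $B_-$ is geometrically reduced, so reducedness of $B_- \times (Z \cap S_{\overline{x}})$ forces that of $Z \cap S_{\overline{x}}$. Axiom (i) for $Z \cap S_{\overline{x}}$ to be an attractive slice in $Z$ is then immediate, since the action map $B_- \times (Z \cap S_{\overline{x}}) \to Z$ is exactly the pulled-back smooth morphism $\mu^{-1}(Z) \to Z$. For the dimension axiom (ii), smoothness of $\mu^{-1}(Z) \to Z$ and of $\mu$ preserves the relative dimension, yielding
\[
\dim(Z \cap S_{\overline{x}}) - \dim Z = \dim S_{\overline{x}} - \dim(G/K),
\]
which, combined with $\dim S_{\overline{x}} + \dim(B_- \cdot \overline{x}) = \dim(G/K)$ from the attractive slice structure on $S_{\overline{x}}$, gives $\dim(Z \cap S_{\overline{x}}) + \dim(B_- \cdot \overline{x}) = \dim Z$ as required.

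For axiom (iii), take the one-parameter subgroup $\lambda : \mathbb{G}_m \to B_-$ provided by the attractive structure on $S_{\overline{x}}$. Since $\lambda(\mathbb{G}_m) \subseteq B_-$ and $Z$ is $B_-$-stable, $\lambda(\mathbb{G}_m)$ stabilizes $Z$, hence also $Z \cap S_{\overline{x}}$, and fixes $\overline{x}$. The extension of the $\mathbb{G}_m$-action to an $\mathbb{A}^1$-action on $S_{\overline{x}}$ restricts to one on $Z \cap S_{\overline{x}}$: for any $s \in Z \cap S_{\overline{x}}$ the orbit map $\mathbb{G}_m \to S_{\overline{x}}$, $t \mapsto \lambda(t)\cdot s$, factors through $Z$ (by $B_-$-invariance), and its extension to $\mathbb{A}^1$ sends $0$ to $\overline{x} \in Z$ since $Z$ is closed. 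The main point requiring care is the reducedness claim --- a priori the scheme-theoretic intersection $Z \cap S_{\overline{x}}$ need not be reduced --- and this is exactly the step where the smoothness of the Mars--Springer action map is essential; once reducedness is established, the remaining verifications are formal transfers along the smooth pullback.
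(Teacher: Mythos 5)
Your argument is correct. Note that the paper does not prove this lemma itself: it defers to the cited reference (\cite[Lemma 2.6]{WWY}), remarking only that the statement is a consequence of the Mars--Springer lemma; your write-up is a self-contained version of exactly that standard argument (pull back $Z$ along the smooth action map $\mu\colon B_-\times S_{\overline{x}}\to G/K$, descend reducedness, and transfer the three slice axioms), so you are not taking a different route so much as supplying the proof the paper omits. One small point deserves more care than your pointwise phrasing suggests: the identification $\mu^{-1}(Z)=B_-\times(Z\cap S_{\overline{x}})$ is an equality of \emph{schemes}, and the statement ``$bs\in Z$ iff $s\in Z$'' only gives it on underlying sets; to get the scheme-theoretic equality one uses the automorphism $(b,y)\mapsto(b,b^{-1}y)$ of $B_-\times G/K$ together with the fact that the reduced, $B_-$-stable closed subvariety $Z$ is stable as a closed subscheme (here reducedness of $Z$ and of $B_-$ enter). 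Similarly, in axiom (iii) the factorization of the extended map $\mathbb{A}^1\times(Z\cap S_{\overline{x}})\to S_{\overline{x}}$ through the closed subscheme $Z\cap S_{\overline{x}}$ uses that the source is reduced, which is available only because you proved reducedness first; with these two routine justifications spelled out, the proof is complete.
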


Following \cite{WWY}, we call an intersection $Z\cap S_{\overline{x}}$, as in Lemma \ref{lem:MSLemma}, a   \emph{Mars-Springer variety}. Note that if $P$ is a parabolic subgroup of $G$ containing $B_-$, then any $P$-orbit closure is also a  $B_-$-orbit closure and the previous lemma applies.

\subsection{Symmetric quotients as matrix spaces}\label{sec:embedding}
Recall that we denote by $\Mat_\epsilon(2d,2d)$ the set of symmetric or skew-symmetric $2d\times 2d$-matrices.
We describe here an open immersion of $G/K$ into $\Mat_\epsilon(2d,2d)$.
Consider the automorphism 
\begin{equation}
\sigma:G\rightarrow G, \quad g\mapsto g\theta (g^{-1})=g\Omega^{-1} g^t\Omega.
\end{equation}
As explained in   \cite[\S1.2]{RS90} or \cite[\S6.3]{MS98}, the morphism $\sigma:G\rightarrow \sigma(G)$ is separable and it is clear that $\ker(\sigma)=G^\theta=K$. Thus, we get an isomorphism $G/K\rightarrow \sigma(G)$, and composing with right multiplication by $\Omega^{-1}$ we get an open immersion
\begin{equation}\label{eqn:MPembedding}
\iota_\Omega:G/K\rightarrow \Mat_\epsilon(2d,2d), \quad gK\mapsto g\Omega^{-1} g^t
\end{equation}
whose image is the open subvariety of invertible elements in $\Mat_\epsilon(2d,2d)$, which we denote by $\Mat_\epsilon^\circ(2d,2d)$. 
Note that while the domain of $\iota_\Omega$ depends on $\Omega$, the image in $\Mat_\epsilon(2d,2d)$ does not.
The open immersion $\iota_\Omega$ is $G$-equivariant where the $G$-action on $G/K$ is given by left multiplication and the $G$-action on $\text{Mat}_\epsilon(2d,2d)$ is given by $g\cdot M = gMg^t$, for $g\in G$ and $M\in \text{Mat}_\epsilon(2d,2d)$.

\subsection{A specific Mars-Springer slice}\label{sec:OmegaOmega'}
We now specialize the above generalities to the case needed for our explicit construction of the symmetric Zelevinsky map (see \S\ref{ssec:symmetricZmap}). Let
\begin{equation}\label{eq:omegapm}
\Omega_\epsilon = \begin{bmatrix}0& J\\ \epsilon J&0\end{bmatrix}
\end{equation}
where $J$ is a $d\times d$ matrix with $1$s along the antidiagonal and $0$s elsewhere
 and $\epsilon\in\{\pm 1\}$ as usual.
We denote by $\theta_\epsilon$
 the involutive automorphism of $G$ corresponding to $\Omega_\epsilon$ 
 via \eqref{eq:theta}.
 Moreover, we write $\iota_\epsilon$ for the embedding $\iota_{\Omega_\epsilon}$ from \eqref{eqn:MPembedding}. 

 It is straightforward to check that $\theta_\epsilon$ preserves the Borel subgroup $B_+$ of upper triangular matrices, its opposite $B_-$ and the maximal torus $T$ of diagonal matrices, so that the theory of the preceding subsections can be applied in this case.

Let $x=\begin{bmatrix}
0& J\\ I_d&0
\end{bmatrix}$, where $I_d$ is the $d\times d$ identity matrix.
Computing $\eta_x= x\theta_\epsilon(x)^{-1}=\epsilon  \begin{bmatrix}J&0\\0&J\end{bmatrix}\in N$,
we conclude that $x\in\mathcal{V}$. Let $\psi_x$ be as in \eqref{eqn:defPsi}. 
Note that $\psi_x(U) = x\theta(x)^{-1}U\theta(x)x^{-1}$ since $U$ is $\theta_\epsilon$-stable. 
With a direct computation we obtain
\begin{equation}\label{eq:psiU}
\psi_x(U) =\left\{ \begin{bmatrix}L_1 &M\\0 &L_2 \end{bmatrix} \mid L_1, L_2 \text{ unipotent lower triangular and $M$ arbitrary}\right\}
\end{equation}
and thus we have
\begin{equation}
U\cap \psi_x(U) = \left\{\begin{bmatrix}I_d&M\\0&I_d\end{bmatrix}\mid \text{$M$ arbitrary}\right\}.
\end{equation}

Then the Mars-Springer slice in this case is
\begin{equation}\label{eq:ourMSslice}
S_{\overline{x}} = (U\cap \psi_x(U))\cdot \overline{x} = \left\{\begin{bmatrix}M&\epsilon J\\I_d&0\end{bmatrix}K/K\right\},
\end{equation}
where $M$ is an arbitrary $d\times d$ matrix. 

The morphism $\iota_\epsilon$ is an isomorphism of $G$-varieties if we restrict its codomain to $\text{Mat}_\epsilon^\circ(2d,2d)$, and we denote its inverse by
\begin{equation}\label{eqn: IsoMatrixSpaceSymmSpace}
        j_\epsilon: \text{Mat}_\epsilon^\circ(2d,2d)\rightarrow G/K.
    \end{equation}

\begin{lemma}\label{lem:MSSIsomorphism}
The isomorphism $\iota_\epsilon$ identifies the slice $S_{\overline{x}} \subset G/K$ with $Y_d^\epsilon \subset \Mat_\epsilon^\circ(2d,2d)$, where
\begin{equation}\label{eqn:MatrixSlice}
Y_d^\epsilon :=\left\{
\begin{bmatrix}
\Sigma&I_d\\ \epsilon I_d&0\end{bmatrix}\mid \Sigma \in \Mat_\epsilon(d,d)\right\}.
\end{equation}
Moreover, if $O$ is a $B_-$-orbit closure on $\Mat_\epsilon^\circ (2d,2d)$ that has non-trivial intersection with $Y_d$, then $O\cap Y_d^\epsilon$ is reduced and  there is a smooth map $B_-\times (Y_d^\epsilon\cap O)\rightarrow O$ given by the restriction of the $B_-$-action.  
\end{lemma}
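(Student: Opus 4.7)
The first statement is a direct computation of $\iota_\epsilon$ on a generic element of $S_{\overline x}$. Write $g = \begin{bmatrix} M & \epsilon J \\ I_d & 0 \end{bmatrix}$, so that every element of $S_{\overline x}$ has the form $gK$. Using $J^2 = I_d$ one first computes $\Omega_\epsilon^{-1}$ and then multiplies out $g\,\Omega_\epsilon^{-1}\,g^t$ in block form. The upper-left block becomes $M + \epsilon M^t$, which, as $M$ ranges over $\Mat(d,d)$, ranges over all of $\Mat_\epsilon(d,d)$ (this is where $\mathrm{char}\,\kk \neq 2$ enters), while the remaining blocks assemble into the required off-diagonal structure of \eqref{eqn:MatrixSlice}. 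This gives the claimed identification $\iota_\epsilon(S_{\overline x}) = Y_d^\epsilon$.

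For the second statement, the plan is to transfer the Mars-Springer Lemma (Lemma~\ref{lem:MSLemma}) across the $B_-$-equivariant isomorphism $\iota_\epsilon: G/K \to \Mat_\epsilon^\circ(2d,2d)$. Set $Z := \iota_\epsilon^{-1}(O)$, a $B_-$-orbit closure in $G/K$ whose intersection with $S_{\overline x}$ corresponds under $\iota_\epsilon$ to $O \cap Y_d^\epsilon$, and so is non-empty by hypothesis.

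The main step, and the only non-routine one, is verifying the hypothesis $\overline x \in Z$ required to invoke Lemma~\ref{lem:MSLemma}. For this I would apply property~(iii) of the attractive slice definition to the Mars-Springer slice $S_{\overline x} \subset G/K$: there exists a one-parameter subgroup $\lambda: \mathbb{G}_m \to B_-$ whose image stabilizes $S_{\overline x}$, fixes $\overline x$, and whose action on $S_{\overline x}$ extends to $\mathbb{A}^1 \times S_{\overline x} \to S_{\overline x}$ with $\{0\} \times S_{\overline x}$ mapping to $\overline x$. Picking any $\overline y \in Z \cap S_{\overline x}$, the $B_-$-stability and closedness of $Z$ force $\lim_{t \to 0}\lambda(t)\cdot \overline y = \overline x$ to lie in $Z$.

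Once $\overline x \in Z$ is established, Lemma~\ref{lem:MSLemma} immediately provides the reducedness of the scheme-theoretic intersection $Z \cap S_{\overline x}$ together with the smoothness of the restricted action map $B_- \times (Z \cap S_{\overline x}) \to Z$ coming from condition~(i) of the attractive slice definition. Pulling both statements back through the $B_-$-equivariant isomorphism $\iota_\epsilon$ yields the reducedness of $O \cap Y_d^\epsilon$ and the smoothness of $B_- \times (O \cap Y_d^\epsilon) \to O$, completing the argument.
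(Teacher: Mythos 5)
Your proposal is correct and takes essentially the same route as the paper: a direct block computation of $\iota_\epsilon$ on the cosets $\begin{bmatrix} M & \epsilon J \\ I_d & 0 \end{bmatrix}K$ identifies $\iota_\epsilon(S_{\overline{x}})$ with $Y_d^\epsilon$, and the second claim is obtained by transporting Lemma \ref{lem:MSLemma} across the $B_-$-equivariant isomorphism $j_\epsilon$, exactly as in the paper. The only difference is that you explicitly check the hypothesis $\overline{x}\in Z$ of Lemma \ref{lem:MSLemma} using the contracting one-parameter subgroup from the attractive-slice definition, a point the paper's proof leaves implicit; this is a welcome extra detail rather than a different argument.
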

\begin{proof}
We directly check that for any $M\in\Mat(d,d)$ we have
\begin{equation}
\iota_\epsilon\left(\begin{bmatrix}
M&\epsilon J\\ I_d&0
\end{bmatrix}K\right)=\begin{bmatrix}
M^t\epsilon M& I_d\\\epsilon I_d&0.
\end{bmatrix}
\end{equation}
and we conclude that the image of  $S_{\overline{x}}$ under $\iota_\epsilon$ is precisely $Y^\epsilon_d$.

Let us denote  by $Z$ the isomorphic image $j_\epsilon(O)$ of our $B_-$-orbit closure.
 Since the isomorphism $j_\epsilon$ is $B_-$-equivariant, the image $Z$ is a $B_-$-orbit closure in $G/K$. Furthermore, we have previously shown that $j_\epsilon(Y^\epsilon_d)$ is equal to the Mars-Springer slice $S_{\overline{x}}$. Thus, by Lemma \ref{lem:MSLemma}, $Z\cap S_{\underline{x}}=j_\epsilon(O)\cap j_\epsilon(Y^\epsilon_d)$ is reduced. Hence, the isomorphic scheme $O\cap Y^\epsilon_d$ is also reduced.
Finally, by Lemma \ref{lem:MSLemma} (and item (i) of the definition of Mars-Springer slice) we get  a smooth map
\begin{equation}
B_-\times (Z\cap S_{\overline{x}})\rightarrow Z,\quad (b,z)\mapsto b\cdot z.
\end{equation}
Since composition by an isomorphism clearly preserves smooth maps, once again from the $B_-$-equivarance of $j_\epsilon$ we obtain a smooth map 
\begin{equation}
B_-\times (O\cap Y_d^\epsilon)\rightarrow O, \quad (b,o)\mapsto \iota_\epsilon(b\cdot j_\epsilon(o))=\iota_\epsilon\cdot j_\epsilon(b\cdot o)=b\cdot o. \qedhere
\end{equation}
\end{proof}

\section{Borel orbit closures in symmetric varieties via rank conditions}
In this section, we fix our conventions for permutations and associated permutation matrices (Section \ref{subsec:permConventions}). We then recall that Borel orbits and their closures in $G/K$ are determined by certain northwest rank conditions (Section \ref{subsec:BorbitsAndRanks}).

\subsection{Parametrization of Borel orbit closures via involutions}
\label{sect:combBorel}
The poset structure of Borel orbit closures in symmetric varieties was studied in \cite{RS90} in general. 
In this section, we recall a result of Richardson-Springer describing the poset structure of
 $B_-$-orbit closures in $G/K$, in the special case of $G=GL(2d)$ and $K=Sp(2d)$ or $O(2d)$. More precisely, there is an order reversing bijection between the set of $B_-$-orbit closures in $G/K$, partially ordered by inclusion, and a distinguished subset of the symmetric group equipped with the Bruhat order.
 Since in the symplectic case our chosen non degenerate skew symmetric form differs from Richardson-Springer's one, we show that their result still holds with our choice. 

Recall that in \S\ref{subsec:MSslices} we have identified the set of $B_-$-orbits on $G/K$ with the $(T,K)$-double cosets in the set $\mathcal{V}$. 
Since $\Omega_{\epsilon}^{-1}\in N$,  there is a map
\begin{equation}
\mathcal{V}\rightarrow  \mathcal{W}=N/T, \quad y\mapsto (y\theta(y)^{-1}\Omega_\epsilon^{-1}) T=(y\Omega_\epsilon^{-1} y^t)T
\end{equation}
which is constant along $(T,K)$-cosets, so that we get an induced map
\begin{equation}\label{eqn:phi}
\varphi:B_-\setminus G/ K\rightarrow  \mathcal{W}=N/T, \quad B_-yK\mapsto (y\Omega_\epsilon^{-1} y^t)T
\end{equation}
where the set on the left hand side has a poset structure given by the  $B_-$-orbit closure inclusion relation on $G/K$. It is clear that if $y\in\mathcal{V}$, then $y\Omega_\epsilon^{-1} y^t$ is a symmetric/skew-symmetric matrix which belongs to $N$, so that if $w\in \textrm{Im}(\varphi)$, then $w=w^{-1}$.

In order to combinatorially control the orbit closure relation, we identify $\mathcal{W}$ with the permutation group $\mathfrak{S}_{2d}$
and set
\begin{equation}
I^\epsilon:=\left\{
\begin{array}{ll}
 \{ w\in \mathcal{W}\mid w=w^{-1}\}&\hbox{ if } \epsilon=1, \\
  \{ w\in \mathcal{W}\mid w=w^{-1}\hbox{ and }w(i)\neq i \hbox{ for all }i\in [2d]\}   & \hbox{ if } \epsilon=-1.
\end{array}\right.
\end{equation}
For $\Omega=\Omega_+$, it is showed in \cite[Example 10.2]{RS90} that 
that $\varphi$ induces an order reversing bijection  between the set of
$B_-$-orbits on $G/K$, partially ordered by inclusion, and $I^{+}$ with the standard Bruhat order.

In the skew-symmetric case, Richardson and Springer considered instead the matrix
\begin{equation}\label{eq:omegaRS}
\Omega' = \begin{bmatrix}0& I_d\\ -I_d&0\end{bmatrix}.
\end{equation}
Denote by $\theta'$ the corresponding involution of $GL(2d)$.
The following lemma relates $\theta_-$ and $\theta'$ and allows us to apply Richardson-Springer's result to conclude that there is an order reversing bijection between $I^{-}$, endowed with the standard Bruhat order, and the poset of 
 $B_-$-orbits on $G/G^{\theta_-}$. 

\begin{lemma}\label{lem:RStheta}
Let $x= \begin{bmatrix}0& J\\ I_d&0\end{bmatrix}$. Then, for any $g\in G$ we have $\theta_-(g)=x^{-1}\theta'(xgx^{-1})x$.
\end{lemma}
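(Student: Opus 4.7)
The plan is to reduce the identity to a direct matrix computation involving the explicit forms of $\Omega_-$, $\Omega'$, and $x$. The key observation is that $x$ intertwines the two bilinear forms up to a scalar: concretely, I would first verify by direct block-matrix multiplication that
\begin{equation*}
x^t \Omega' x = -\Omega_-.
\end{equation*}
Using $J^t = J$ and $J^2 = I_d$, the computation is straightforward: one finds $\Omega' x = \begin{bmatrix} I_d & 0 \\ 0 & -J \end{bmatrix}$, and multiplying by $x^t = \begin{bmatrix} 0 & I_d \\ J & 0 \end{bmatrix}$ on the left produces $\begin{bmatrix} 0 & -J \\ J & 0 \end{bmatrix}$, which is exactly $-\Omega_-$.

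Next, I would unfold the right-hand side of the desired identity using the definition \eqref{eq:theta}. Writing $\theta'(h) = \Omega'^{-1} h^{-t} \Omega'$ and taking $h = xgx^{-1}$, one has $h^{-t} = x^{-t} g^{-t} x^t$, so
\begin{equation*}
x^{-1} \theta'(xgx^{-1}) x = x^{-1} \Omega'^{-1} x^{-t} g^{-t} x^t \Omega' x = (x^t \Omega' x)^{-1}\, g^{-t}\, (x^t \Omega' x).
\end{equation*}
Substituting $x^t \Omega' x = -\Omega_-$, the two scalar factors of $-1$ cancel (they commute with $g^{-t}$), giving $\Omega_-^{-1} g^{-t} \Omega_- = \theta_-(g)$, as required.

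There is no real obstacle here; the only step requiring care is the block-matrix computation of $x^t \Omega' x$, where one must keep track of the antidiagonal $J$ carefully, together with the transpose $x^t$. Everything else is formal manipulation of the definition of $\theta$ in \eqref{eq:theta} together with the fact that conjugation by a scalar matrix is trivial. The relation can also be interpreted conceptually: the two involutions $\theta_-$ and $\theta'$ correspond to two choices of nondegenerate skew-symmetric form on $\kk^{2d}$, and $x$ is precisely a change-of-basis realizing the isometry between them (up to the harmless sign $-1$), so conjugation by $x$ must identify the associated fixed-point subgroups and the associated involutions. This is exactly what the lemma formalizes.
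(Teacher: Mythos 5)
Your proof is correct: the congruence $x^t\Omega' x = -\Omega_-$ is verified correctly, and the formal manipulation $x^{-1}\theta'(xgx^{-1})x = (x^t\Omega' x)^{-1}g^{-t}(x^t\Omega' x) = \Omega_-^{-1}g^{-t}\Omega_- = \theta_-(g)$ is exactly right, the sign $-1$ cancelling as you say. This is in the same spirit as the paper's argument, which is also a direct computation, though organized slightly differently: the paper factors $\Omega' = \Omega''\Omega_-$ with $\Omega'' = \begin{bmatrix} J & 0\\ 0 & J\end{bmatrix} = x^2$ and uses the identities $\theta_-(\Omega'')=\Omega''$, $\theta_-(x)=-x^{-1}$ to rewrite $\theta'(xgx^{-1})$ as $\theta_-(\Omega'' x)\theta_-(g)\theta_-(\Omega'' x)^{-1} = x\theta_-(g)x^{-1}$, whereas you package the whole computation into the single statement that $x$ is an isometry (up to the harmless sign) between the forms $\Omega'$ and $\Omega_-$; your version is, if anything, a bit more transparent conceptually.
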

\begin{proof}Let $\Omega''=\begin{bmatrix}
J&0\\0&J
\end{bmatrix}=\Omega''^{-1}$, so that $\Omega'=\Omega''\Omega_-$. Note that
\[
\theta_-(\Omega'')=\Omega'', \quad \Omega''=x^2, \quad\hbox{and }\quad \theta_-(x)=- x^{-1},
\]
hence $\theta_-(\Omega''x)=- x$. Then for $g\in G$ we have
\[
   \theta'(xgx^{-1})=\Omega_{-}'^{-1}(xgx^{-1})^{-t}{\Omega'}=\Omega_-^{-1}\Omega''(xg^{-t}x^{-1}){\Omega''}^{-1}\Omega_-=\theta_-(\Omega''x)\theta_-(g)\theta_-(\Omega''x)^{-1}=x\theta_-(g)x^{-1}. \qedhere
\]
\end{proof}

Let $B'_-:=xB_-x^{-1}$, $K'=G^{\theta'}$ and 
\[
\varphi':B'_-\setminus G/K'\rightarrow I^\epsilon, \quad B'_-yK'\mapsto (y\Omega'^{-1}y^t)T.
\]
By \cite[Propostion 10.4.1]{RS90}, $\varphi'$ is an order reversing bijection. Therefore, by Lemma \ref{lem:RStheta}, also in the skew-symmetric case $\varphi$ induces an order reversing bijection.

We summarize the above discussion in the following proposition:
\begin{proposition}\label{prop:RS-BOrbitsInvolutions}
The map
\[
\varphi:B_-\setminus G/ K\rightarrow  I^\epsilon, \quad B_-yK\mapsto (y\Omega'^{-1}y^t)T
\]
is well defined and gives an order reversing bijection.
\end{proposition}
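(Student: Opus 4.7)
The plan is to assemble the proposition as a summary of the preceding set-up, splitting into the two cases $\epsilon = \pm 1$ and handling well-definedness, bijectivity, and order-reversal in turn.

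First I would dispatch well-definedness. The discussion immediately following \eqref{eqn:phi} already observes that for $y \in \mathcal{V}$, the element $y\Omega_\epsilon^{-1} y^t = y\theta_\epsilon(y)^{-1}\Omega_\epsilon^{-1}$ lies in $N$, and being symmetric or skew-symmetric its class modulo $T$ is an involution in $\mathcal{W}$; in the skew-symmetric case it additionally has no fixed points, so the image lies in $I^\epsilon$. Composition with the bijection $T\backslash \mathcal{V}/K \xrightarrow{\sim} B_-\backslash G/K$ recalled in \S\ref{subsec:MSslices} shows that the value on a representative $y$ depends only on the $B_-$-orbit, so $\varphi$ is a well-defined map on double cosets.

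For $\epsilon = +1$, the involution $\theta_+$ already stabilizes $B_-$ and $T$, so this case is a direct application of \cite[Example 10.2]{RS90}: Richardson--Springer give precisely the order-reversing bijection $B_-\backslash G/K_+ \to I^+$ with the formula asserted. The interesting case is $\epsilon=-1$, where Richardson--Springer's result is stated not for $\theta_-$ but for the involution $\theta'$ attached to $\Omega'$. The key bridge is Lemma \ref{lem:RStheta}, which says $\theta_-(g) = x^{-1}\theta'(xgx^{-1})x$. From this identity I would extract two consequences: $K_-=x^{-1}K'x$ where $K'=G^{\theta'}$, and the Borel $B_-':=xB_-x^{-1}$ is $\theta'$-stable (using that $B_-$ is $\theta_-$-stable). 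Conjugation $c_x(g)=xgx^{-1}$ is then a homeomorphism of $G$ carrying the $B_-$-action on $G/K_-$ to the $B_-'$-action on $G/K'$, and therefore induces an order-isomorphism of orbit-closure posets
\[
\overline{c}_x : B_- \backslash G/K_- \xrightarrow{\sim} B_-' \backslash G/K', \qquad B_-gK_- \mapsto B_-' xgx^{-1}K'.
\]

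Composing $\overline{c}_x$ with the Richardson--Springer bijection $\varphi' : B_-'\backslash G/K' \to I^-$ from \cite[Proposition 10.4.1]{RS90} gives an order-reversing bijection from $B_-\backslash G/K_-$ to $I^-$, establishing the proposition up to identifying this composite with $\varphi$. The main technical obstacle I expect is precisely this last identification: writing out $\varphi'\circ \overline{c}_x(B_-gK_-)$ gives $(xgx^{-1}\Omega'^{-1}xg^tx^{-1})T$, and one must convert this into the stated formula using the relation $\Omega'^{-1} = x\Omega_- x^{-1}$ (equivalently $\Omega_-^{-1}=x^{-1}\Omega' x$) derived from Lemma \ref{lem:RStheta} together with freedom to modify representatives by $T$ and by elements of $K_-$; this is the only calculation of substance, the rest being assembly of the already-established lemmas.
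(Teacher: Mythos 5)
Your overall architecture (well-definedness from the discussion after \eqref{eqn:phi}, the case $\epsilon=+1$ directly from \cite[Example 10.2]{RS90}, the case $\epsilon=-1$ via Lemma \ref{lem:RStheta} and \cite[Proposition 10.4.1]{RS90}) matches the paper, and your bookkeeping $K'=xK_-x^{-1}$ and the $\theta'$-stability of $B'_-$ are correct. The gap is exactly in the final identification that you flag as the main point, and as proposed it does not go through. Carrying out the computation with $x^t=x^{-1}$ and $x^{-1}\Omega'^{-1}x=\Omega_-$ gives
\[
\varphi'\bigl(\overline{c}_x(B_-gK_-)\bigr)=\bigl(xgx^{-1}\,\Omega'^{-1}\,xg^{t}x^{-1}\bigr)T=\bigl(x\,g\Omega_- g^{t}\,x^{-1}\bigr)T=\bar{x}\,\varphi(B_-gK_-)\,\bar{x}^{-1},
\]
where $\bar{x}\in\mathcal{W}$ is the class of $x$ (the sign $\Omega_-=-\Omega_-^{-1}$ is absorbed by $-I\in T$). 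The outer conjugation cannot be removed by ``modifying representatives by $T$ and by elements of $K$'': the value $(y\Omega'^{-1}y^{t})T$ is invariant under precisely those modifications (that is why $\varphi'$ is well defined on double cosets), so the composite is pinned down as $\operatorname{conj}_{\bar{x}}\circ\varphi$, not $\varphi$. Moreover $\operatorname{conj}_{\bar{x}}$ is not a Bruhat-order automorphism, and it already fails on $I^-$: for $d=2$ one has $\bar{x}=4312$ in one-line notation, and conjugation by it fixes $2143$ but interchanges the comparable pair $3412<4321$. So from your composite one cannot conclude the proposition for $\varphi$ itself (nor recover the stated formula).

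The repair is to transfer with a map that leaves the Borel alone. Since $K'=xK_-x^{-1}$, right translation $gK_-\mapsto gx^{-1}K'$ is a well-defined isomorphism $G/K_-\to G/K'$ which is equivariant for the left $B_-$-action on both sides, hence induces an order isomorphism of $B_-$-orbit closure posets; pulling back Richardson--Springer's map for $(B_-,K',\Omega')$ along it yields
\[
(gx^{-1})\Omega'^{-1}(gx^{-1})^{t}=g\bigl(x^{-1}\Omega'^{-1}x\bigr)g^{t}=g\Omega_- g^{t}\equiv g\Omega_-^{-1}g^{t}\pmod{T},
\]
i.e.\ exactly $\varphi$, with the standard Bruhat order and the standard Borel throughout (this also avoids invoking the Richardson--Springer statement for the non-standard Borel $B'_-$). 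This is the content behind the paper's terse ``Therefore, by Lemma \ref{lem:RStheta}''; your conjugation route, as written, proves the statement for $\operatorname{conj}_{\bar{x}}\circ\varphi$ rather than for $\varphi$.
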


Observe that if $y$ happens to be an element of $I^\epsilon$, it does not hold in general that $yT=(y\Omega'^{-1}y^t)T$, as the following example shows.

 \begin{example}
 Let \[
y=
\left[
\begin{array}{cccccc}
     1& 0&0&0&0&0\\
     0 &1&0&0&0&0\\
      0 &0&0&0&0&1\\
    0    &0&0&1&0&0\\
        0 &0&0&0&1&0\\
         0 &0&1&0&0&0
\end{array}
\right]
.
\]
Then \[
\varphi(B_-yK)=
(y\Omega_\epsilon y^t)T=y\Omega_\epsilon yT=\left[
\begin{array}{cccccc}
     0& 0&1&0&0&0\\
     0 &0&0&0&1&0\\
    \epsilon &0&0&0&0&0\\
    0 &0&0&0&0&1\\
    0 &\epsilon&0&0&0&0\\
    0 &0&0&\epsilon&0&0
\end{array}
\right]T
\]
and the latter coset corresponds to the permutation [351624].
 \end{example}

The next proposition is a straightforward consequence of the above material together with the $B_-$-equivariance of the open immersion $\iota_{\epsilon}:G/K\rightarrow \text{Mat}_\epsilon(2d,2d)$. Recall that the image of $\iota_\epsilon$ is the open set $\text{Mat}_\epsilon^\circ(2d,2d)$ of invertible matrices in $\text{Mat}_\epsilon(2d,2d)$. 

\begin{proposition}\label{prop:BOrbitsMatrixSpaceInvolutions}
Let $\mathcal{T}^\epsilon$ denote the set of $B_-$-orbit closures in $\text{Mat}_\epsilon^\circ(2d,2d)$, partially ordered by inclusion. There is an order reversing bijection between $\mathcal{T}^\epsilon$ and $I^\epsilon$.

\end{proposition}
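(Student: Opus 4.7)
The plan is to transport the order-reversing bijection of Proposition \ref{prop:RS-BOrbitsInvolutions} along the $B_-$-equivariant isomorphism $\iota_\epsilon$ of \eqref{eqn:MPembedding} from $G/K$ onto $\text{Mat}_\epsilon^\circ(2d,2d)$.

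First, I would observe that although $\iota_\epsilon$ was stated as a $G$-equivariant open immersion from $G/K$ to $\text{Mat}_\epsilon(2d,2d)$, its corestriction
\[
\iota_\epsilon: G/K \xrightarrow{\;\sim\;} \text{Mat}_\epsilon^\circ(2d,2d)
\]
is a $G$-equivariant isomorphism of varieties, hence in particular a $B_-$-equivariant isomorphism. Any equivariant isomorphism of $H$-varieties induces a poset isomorphism between $H$-orbit closures ordered by inclusion: orbits are mapped bijectively to orbits, and since the map is a homeomorphism (in fact an isomorphism of varieties), Zariski closures are preserved. Applying this with $H = B_-$ gives a poset isomorphism
\[
\iota_{\epsilon,*}: \{B_-\text{-orbit closures in }G/K\} \xrightarrow{\;\sim\;} \mathcal{T}^\epsilon
\]
with respect to the inclusion orders on both sides.

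Next, I would compose this with the order-reversing bijection from Proposition \ref{prop:RS-BOrbitsInvolutions},
\[
\varphi: B_-\!\setminus G/K \longrightarrow I^\epsilon, \qquad B_- y K \mapsto (y\Omega_\epsilon^{-1} y^t)T,
\]
to obtain the desired order-reversing bijection
\[
\varphi \circ \iota_{\epsilon,*}^{-1}: \mathcal{T}^\epsilon \longrightarrow I^\epsilon.
\]
Explicitly, a $B_-$-orbit closure $\overline{B_- \cdot M} \subset \text{Mat}_\epsilon^\circ(2d,2d)$ is sent to the involution $(y\Omega_\epsilon^{-1} y^t)T$ where $y \in G$ is any representative such that $y\Omega_\epsilon^{-1} y^t = M$, which exists since $\iota_\epsilon$ is surjective onto $\text{Mat}_\epsilon^\circ(2d,2d)$.

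The only point that requires care is that we take closures inside $\text{Mat}_\epsilon^\circ(2d,2d)$ (an open subvariety), not inside all of $\text{Mat}_\epsilon(2d,2d)$; with this convention, since $\iota_\epsilon$ is an isomorphism of varieties onto $\text{Mat}_\epsilon^\circ(2d,2d)$, passing to closures commutes with $\iota_\epsilon$, and there is no subtlety about orbit closures leaving the open locus. With this observation the proof reduces to the formal composition above, so I do not expect any genuine obstacle.
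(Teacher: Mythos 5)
Your proposal is correct and is essentially the paper's own argument: the paper likewise transports the order-reversing bijection $\varphi$ of Proposition \ref{prop:RS-BOrbitsInvolutions} along the $B_-$-equivariant isomorphism between $\Mat_\epsilon^\circ(2d,2d)$ and $G/K$ (it phrases this via the inverse map $j_\epsilon$ rather than $\iota_\epsilon$, which is an immaterial difference). Your added remark about taking closures inside the open locus $\Mat_\epsilon^\circ(2d,2d)$ is a fine clarification but not a divergence from the paper's proof.
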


\begin{proof}
    Recall the isomorphism $j_\epsilon$ from \eqref{eqn: IsoMatrixSpaceSymmSpace} . Being $B_-$-equivariant, it
 induces a poset isomorphism $\psi:\mathcal{T}^\epsilon\rightarrow B_-\setminus G/K$. By composing such an isomorphism with the order reversing bijection $\varphi$ from Proposition \ref{prop:RS-BOrbitsInvolutions} we obtain the desired order reversing bijection $\varphi\circ\psi: \mathcal{T}^\epsilon\rightarrow I^\epsilon$.\end{proof}

\begin{remark}
The above result will be applied in Section \ref{subsec:BorbitsAndRanks} to get rank conditions describing the poset of orbit closures in $\text{Mat}_\epsilon^\circ(2d,2d)$ (and hence in $G/K$) with respect to the action of a distinguished parabolic subgroup containing
$B_-$. Once these rank conditions are determined, they will be a central tool in Section \ref{sect:ZmapTotal} to relate the poset of orbit closures in a symmetric quiver orbit closure to the one for an appropriate parabolic action on a symmetric variety.
\end{remark}

\subsection{Permutation conventions}\label{subsec:permConventions}

Let $\mathfrak{S}_m$ denote the symmetric group on $m$ elements and let $v\in \mathfrak{S}_m$ be a permutation. We associate a permutation matrix to $v$ by placing $1$s at locations $(i,v(i))$, $1\leq i\leq m$ and $0$s elsewhere. Going forward, we will use $v$ to denote both the permutation in $\mathfrak{S}_m$ and its $m\times m$ permutation matrix. 

The \emph{Rothe diagram} $D(v)$ of a permutation matrix $v$ is the set $D(v)  = \{(i,j)\in [m]\times [m]\mid v(i)>j, v^{-1}(j)>i\}.$ This is equal to the set of locations $(i,j)\in [m]\times [m]$ left over after crossing out those cells which appear below a $1$ (and in the same column as the $1$) and to the right of a $1$ (and in the same row as the $1$), as well as the cell of the corresponding $1$ itself. The \emph{essential set} $\mathcal{E}(v)$ is defined by
 \[\mathcal{E}(v)=\{(i,j)\in D(v) \mid (i+1,j),(i,j+1)\not \in D(v)\}.\] 

Let $w\in \mathfrak{S}_{m}$ be an involution. Following Hamaker, Marberg, and Pawlowski in \cite{MR3846203}, 
define the \emph{orthogonal and symplectic Rothe diagrams} $D^+(w)$ and
$D^-(w)$ to be the subsets of positions in the usual Rothe diagram $D(w)$ that are weakly and strictly below the main diagonal, respectively. For $\epsilon\in\{\pm 1\}$ define the \textbf{essential sets} 
$\mathcal{E}^\epsilon(w)$ 
as follows:
 \[
 \mathcal{E}^{\epsilon}(w):=\{(i,j)\in D^{\epsilon}(w) \mid (i+1,j),(i,j+1)\not \in D^{\epsilon}(w)\}.
 \]

\begin{example}
If $w =  21563487\in\mathfrak{S}_8$
then the orthogonal and symplectic Rothe diagrams are the locations marked by circles and stars below
\[ D^+(w) =
 \left\{
\begin{array}{cccccccc}
\textcolor{red}{\star} & . & .& . & . & .&.&. 
\\
.& .& . & . & . & . &.&.
\\
. &. & \circ & . & . & . &.&.
\\
. & . & \circ & \textcolor{red}{\star} & . & . &.&.
\\
. & . & . & . & . & . &. &.
\\
. & . & . & . & . & .&.&. 
\\
. & . & . & . & . & . &\textcolor{red}{\star} &.
\\
. & . & . & . & . & .&.&. 
\end{array}
\right\},
\qquad
 D^{-}(w) = 
 \left\{
\begin{array}{cccccccc}
.& . & .& . & . & . &.&.
\\
. &. & . & . & . & . &.&.
\\
. & .& . & . & . & . &.&.
\\
. & . & \textcolor{red}{\star} & . & . & . &.&.
\\
. & . & . & . & . & . &.&.
\\
. & . & . & . & . & . &.&.
\\
. & . & . & . & . & . &.&.
\\
. & . & . & . & . & . &.&.
\end{array}
\right\}.
\]
The pairs $(i,j)$ belonging to the essential sets are marked by red stars.
\end{example}

\subsection{Orbit closures and rank conditions}\label{subsec:BorbitsAndRanks}
By Proposition \ref{prop:BOrbitsMatrixSpaceInvolutions} each 
$B_-$-orbit in $\text{Mat}_\epsilon^\circ(2d,2d)$, is represented uniquely by an involution in the set $I^\epsilon$.
For $y\in\Mat_\epsilon^\circ(2d,2d)$, we denote by $w_y\in I^\epsilon$ the unique involution corresponding to $\overline{B_-\cdot y}$. We notice that $w_y$ is the unique permutation matrix in the double coset $B_-yB\subseteq GL_{2d}$, so that for $\epsilon=-1$ it is not an element of $\Mat_\epsilon^\circ(2d,2d)$.

Let $M\in \Mat(2d,2d)$ and let $(i,j)$ be a pair of integers with $1\leq i,j\leq 2d$. 
If $M = (m_{s,t})_{1\leq s,t\leq 2d}$, define $M_{i\times j} = (m_{s,t})_{1\leq s\leq i, 1\leq t\leq j}$. That is, $M_{i\times j}$ is the northwest-justified $i\times j$ submatrix of $M$.
Define the associated northwest rank function by  $r_{i,j}(M) = \rank M_{i\times j}.$

\begin{proposition}\label{prop:matrixNWranks}
Let $x,y\in \Mat_\epsilon^\circ(2d,2d)$. Then,
\begin{enumerate}[(i)]
    \item $B_-\cdot x = B_-\cdot y$ if and only if $r_{i,j}(x)=r_{i,j}(y)$ for all $1\leq i,j\leq 2d$;
    \item $B_-\cdot x \subseteq \overline{B_-\cdot y}$  if and only if $r_{i,j}(x)\leq r_{i,j}(y)$   for all $1\leq i,j\leq 2d$.
\end{enumerate}
\end{proposition}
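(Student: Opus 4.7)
The plan is to reduce both statements to the classical rank characterization of Bruhat order on $\mathfrak{S}_{2d}$, transported through the order-reversing bijection $\mathcal{T}^\epsilon \to I^\epsilon$ of Proposition \ref{prop:BOrbitsMatrixSpaceInvolutions}. The first step is to verify that the northwest ranks $r_{i,j}$ are invariants of the $B_-$-action $b\cdot M = bMb^t$ on $\Mat_\epsilon^\circ(2d,2d)$. This is a direct linear algebra observation: if $b\in B_-$ is lower triangular, then left multiplication by $b$ only adds multiples of higher rows to lower rows, so the row span of the first $i$ rows of $M$ is unchanged, and dually right multiplication by $b^t$ (upper triangular) fixes the column span of the first $j$ columns. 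Hence $r_{i,j}(bMb^t) = r_{i,j}(M)$ for every $b \in B_-$ and every $(i,j)$.

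Next I would show that every orbit contains a monomial representative whose northwest ranks coincide with those of its associated involution. Given $x \in \Mat_\epsilon^\circ(2d,2d)$, choose $g\in G$ with $x = g\Omega_\epsilon^{-1}g^t$, i.e., $\iota_\epsilon(gK)=x$. By the discussion preceding Proposition \ref{prop:RS-BOrbitsInvolutions}, every $B_-$-orbit on $G/K$ admits a representative in $\mathcal{V}$; pick $y \in B_-gK \cap \mathcal{V}$ and set $P := y\Omega_\epsilon^{-1} y^t$. Then $P \in N$ is monomial, and by the formula $\varphi(B_-yK) = PT$ its underlying permutation is exactly the involution $w_x \in I^\epsilon$ attached to the orbit of $x$. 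Since $\iota_\epsilon$ is $B_-$-equivariant, $P = \iota_\epsilon(yK)$ lies in the same $B_-$-orbit as $x$, so step one gives $r_{i,j}(x) = r_{i,j}(P)$; and since $P$ differs from the permutation matrix of $w_x$ only by left multiplication by a diagonal matrix in $T$, the equality $r_{i,j}(P) = r_{i,j}(w_x)$ is immediate. Thus $r_{i,j}(x) = r_{i,j}(w_x)$ for all $(i,j)$.

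Finally, I would invoke the classical rank criterion for Bruhat order on $\mathfrak{S}_{2d}$: for permutations $u, v$, one has $u \le v$ if and only if $r_{i,j}(u) \ge r_{i,j}(v)$ for all $1 \le i,j \le 2d$, and $u = v$ if and only if $r_{i,j}(u) = r_{i,j}(v)$ for all $(i,j)$. Combining this with Proposition \ref{prop:BOrbitsMatrixSpaceInvolutions}: $B_-\cdot x = B_-\cdot y$ iff $w_x = w_y$ iff $r_{i,j}(w_x) = r_{i,j}(w_y)$ for all $(i,j)$ iff $r_{i,j}(x) = r_{i,j}(y)$ for all $(i,j)$, proving (i). For (ii), since the bijection is order-reversing, $B_-\cdot x \subseteq \overline{B_-\cdot y}$ is equivalent to $w_y \le w_x$ in Bruhat order, which by the classical criterion is equivalent to $r_{i,j}(w_y) \ge r_{i,j}(w_x)$ for all $(i,j)$, i.e.\ $r_{i,j}(x) \le r_{i,j}(y)$ for all $(i,j)$.

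The only delicate point is step two, namely verifying that the monomial matrix $P$ extracted from an orbit representative in $\mathcal{V}$ actually has the permutation $w_x$ as its support, rather than some $T$-translate of a different permutation. This is really just unwinding the definition of $\varphi$ from Proposition \ref{prop:RS-BOrbitsInvolutions} together with the fact that for a monomial matrix $P \in N$ the rank function $r_{i,j}$ depends only on the underlying permutation (the nonzero scalings by $T$ are irrelevant to ranks); once this is in hand, the remainder of the argument is automatic from the classical theory.
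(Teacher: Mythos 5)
Your proposal is correct and takes essentially the same route as the paper's proof: both reduce the statement to the order-reversing bijection of Proposition \ref{prop:BOrbitsMatrixSpaceInvolutions} combined with Fulton's northwest-rank characterization of Bruhat order on $\mathfrak{S}_{2d}$, with (i) following formally from (ii). The only difference is in how the bridging identity $r_{i,j}(x)=r_{i,j}(w_x)$ is justified: you verify it explicitly via $B_-$-invariance of northwest ranks under the twisted action and a monomial representative $y\Omega_\epsilon^{-1}y^t$ with $y\in\mathcal{V}$, whereas the paper obtains it implicitly from the observation that $w_x$ is the unique permutation matrix in the double coset $B_-xB$.
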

\begin{proof}

(ii) By Proposition \ref{prop:BOrbitsMatrixSpaceInvolutions},  for $x,y\in \text{Mat}^\circ_\epsilon(n,n)$, the inclusion $\overline{B_-\cdot x}\subseteq \overline{B_-\cdot y}$  (and hence $B_-\cdot x\subseteq \overline{B_-\cdot y}$) holds if and only if the corresponding elements $w_x, w_y\in I^\epsilon$ satisfy $w_x\geq w_y$ in Bruhat order. By \cite{Ful92}, $w_x\geq w_y$ in Bruhat order if and only if $r_{i,j}(x)\leq r_{i,j}(y)$ for all $i,j$.

(i) We have 
$B_-\cdot x = B_-\cdot y \Leftrightarrow \overline{B_-\cdot x} = \overline{B_-\cdot y} \Leftrightarrow$
both  
$B_-\cdot x \subseteq \overline{B_-\cdot y}$
and
$B_-\cdot y \subseteq \overline{B_-\cdot x}$.
Then by (ii) this is equivalent to both $r_{i,j}(x)\leq r_{i,j}(y)$ and $r_{i,j}(y)\leq r_{i,j}(x)$ for all $i,j$.
\end{proof}

Using the orthogonal and symplectic essential sets, we can further  decrease the number of rank functions needed to characterize orbit closures.

\begin{proposition}\label{prop:matrixNWranks2}
Let $x,y\in \Mat_\epsilon^\circ(2d,2d)$. Then,
 $B_-\cdot x \subseteq \overline{B_-\cdot y}$
 if and only if $r_{i,j}(x)\leq r_{i,j}(y)$ for all $(i,j) \in \mathcal{E}^\epsilon(w_y)$.
\end{proposition}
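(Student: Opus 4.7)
The forward direction is immediate: if $r_{i,j}(x) \leq r_{i,j}(y)$ for every $1 \leq i,j \leq 2d$, then the inequality holds in particular for $(i,j) \in \mathcal{E}^\epsilon(w_y)$.

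For the converse, I would first replace $y$ by $w_y$. Since $y$ lies in the Bruhat cell $B_- w_y B \subseteq GL_{2d}$, and northwest ranks are invariant under left multiplication by lower-triangular matrices and right multiplication by upper-triangular matrices, we have $r_{i,j}(y) = r_{i,j}(w_y)$ for all $(i,j)$. By Proposition \ref{prop:matrixNWranks}(ii) it therefore suffices to show that the hypothesized rank inequalities at $(i,j) \in \mathcal{E}^\epsilon(w_y)$ force $r_{i,j}(x) \leq r_{i,j}(w_y)$ for all $(i,j)$.

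I would then reduce in three steps. First, Fulton's essential-set theorem \cite{Ful92} shows that the inequalities for all $(i,j)$ are equivalent to the inequalities at $(i,j) \in \mathcal{E}(w_y)$. Second, since $w_y$ is an involution, $D(w_y)$ is invariant under transposition, and hence so is $\mathcal{E}(w_y)$; combined with the identity $r_{i,j}(x) = r_{j,i}(x)$ for $x \in \Mat_\epsilon$ and the analogous identity for $w_y$, the rank inequality at $(i,j) \in \mathcal{E}(w_y)$ is equivalent to the one at $(j,i) \in \mathcal{E}(w_y)$. This restricts the task to points weakly below the main diagonal, i.e.\ to $\mathcal{E}(w_y) \cap D^+(w_y)$. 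Third, a direct check of definitions yields $\mathcal{E}(w_y) \cap D^\epsilon(w_y) \subseteq \mathcal{E}^\epsilon(w_y)$: if $(i+1,j), (i,j+1) \notin D(w_y)$ then a fortiori these cells avoid the subdiagram $D^\epsilon(w_y)$. In the symmetric case $\epsilon = +1$, this chain is complete because $D^+(w_y)$ contains the entire weakly-below-diagonal portion of $D(w_y)$.

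The main remaining subtlety, in the skew-symmetric case $\epsilon = -1$, concerns diagonal positions $(i,i) \in \mathcal{E}(w_y)$, since $D^-(w_y)$ omits the diagonal. I would resolve this via a parity argument. The hypothesis $(i,i) \in D(w_y)$ forces $w_y(i) > i$, and a brief calculation gives $r_{i,i}(w_y) = r_{i,i-1}(w_y)$. Moreover $r_{i,i}(w_y)$ is even, because $w_y$ is fixed-point-free and so orbits $\{k, w_y(k)\}$ contribute $2$ to $r_{i,i}(w_y)$ only when both coordinates are $\leq i$. Since $r_{i,i}(x)$ is also even (principal submatrix of a skew-symmetric matrix) and $r_{i,i}(x) \leq r_{i,i-1}(x) + 1$, the rank bound at $(i,i-1)$ forces $r_{i,i}(x) \leq r_{i,i-1}(w_y) = r_{i,i}(w_y)$ by parity. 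The main technical obstacle will be verifying the bound at $(i,i-1)$ from the $\mathcal{E}^-(w_y)$-conditions, which requires a Fulton-style induction on the subdiagram $D^-(w_y)$. Alternatively, one may invoke directly the essential-set theorem for (skew-)symmetric matrix Schubert varieties from \cite{MR3846203}, which subsumes the entire chain of reductions in one step.
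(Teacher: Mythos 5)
The paper's own proof is essentially a citation: by Proposition \ref{prop:matrixNWranks}(ii) the closure $\overline{B_-\cdot y}$ is cut out set-theoretically by the full collection of northwest rank conditions, and then \cite[Prop.~2.16]{MP19} is invoked to replace these by the conditions indexed by $\mathcal{E}^\epsilon(w_y)$. So your ``alternative'' route --- quoting the essential-set theorem for (skew-)symmetric matrix Schubert varieties --- is exactly the paper's argument, except that the relevant statement is \cite[Prop.~2.16]{MP19}; the paper cites \cite{MR3846203} only for the definitions of $D^\epsilon$ and $\mathcal{E}^\epsilon$. Your hands-on reduction is a genuinely different, more self-contained route, and in the case $\epsilon=+1$ it is complete and correct: transpose-invariance of $D(w_y)$ for an involution, the identities $r_{i,j}(x)=r_{j,i}(x)$, $r_{i,j}(y)=r_{j,i}(y)$, $r_{i,j}(w_y)=r_{j,i}(w_y)$, the containment $\mathcal{E}(w_y)\cap D^{+}(w_y)\subseteq\mathcal{E}^{+}(w_y)$, Fulton's essential-set lemma \cite{Ful92}, and Proposition \ref{prop:matrixNWranks}(ii) do finish the symmetric case, which is a nice byproduct the paper does not spell out.

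For $\epsilon=-1$, however, the step you flag is a genuine gap rather than a deferrable technicality. Your parity argument at a diagonal box $(i,i)\in\mathcal{E}(w_y)$ needs $r_{i,i-1}(x)\le r_{i,i-1}(w_y)$, but $(i,i-1)$ is in general neither in $\mathcal{E}^{-}(w_y)$ nor in $\mathcal{E}(w_y)$, and at that point of your argument you have not yet established any inequalities off the hypothesized boxes; deriving all strictly-sub-diagonal inequalities from the $\mathcal{E}^{-}(w_y)$ conditions (with parity entering again at intermediate positions) is precisely the nontrivial content of \cite[Prop.~2.16]{MP19}, not a routine rerun of Fulton's induction, since Fulton's argument is tailored to the full Rothe diagram and does not restrict blindly to the lower triangle. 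As written, your primary route in the skew-symmetric case therefore either has to carry out that induction in full or collapse onto the literature citation --- which is what the paper does in one step for both values of $\epsilon$.
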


\begin{proof}
By Proposition \ref{prop:matrixNWranks} (ii), we can characterize an orbit closure as
\begin{equation}
    \overline{B_-\cdot y} = \{z\in \text{Mat}^\circ_\epsilon(2d,2d) \mid r_{i,j}(z)\leq r_{i,j}(w_y) \text{ for all }(i,j)\}.
\end{equation}
Then using \cite[Prop. 2.16]{MP19}, we have
\begin{equation}\label{eq:MPranks}
        \overline{B_-\cdot y} = \{z\in \text{Mat}^\circ_\epsilon(2d,2d) \mid r_{i,j}(z)\leq r_{i,j}(w_y) \text{ for all }(i,j) \in \mathcal{E}^\epsilon(w_y)\}.
\end{equation}
Now, $B_-\cdot x\subseteq \overline{B_-\cdot y}$ if and only if $x\in \overline{B_-\cdot y}$.  By equation \eqref{eq:MPranks}, $x\in \overline{B_-\cdot y}$ if and only if $r_{i,j}(x)\leq r_{i,j}(w_y) = r_{i,j}(y)$ for all $(i,j) \in \mathcal{E}^\epsilon(w_y)$. 
\end{proof}

We now turn our attention to orbits of parabolic subgroups.
Given a series of integers $(d_1, \dotsc, d_r)$ such that $\sum_{i=1}^r d_i = 2d$, we consider a block structure on $2d\times 2d$ matrices where the sizes of the column blocks are $d_1,\dotsc,d_r$ moving from left to right, and the same for rows moving from top to bottom.
We have an associated parabolic subgroup $P \leq G=GL(2d)$ consisting of block lower triangular matrices with square blocks of size $d_1,\dotsc,d_r$ from top-left to bottom-right along the main diagonal.
We write $\mathcal{C}=\{\sum_{i=1}^k d_i \mid 1 \leq k \leq 2d\}$ for the set of partial sums of the sequence $(d_1, \dotsc, d_r)$.  These are the indices of the rows (and columns) of the lower right corners of the blocks.

\begin{proposition}\label{prop:POrbitNWRanks}
Let $x, y \in \Mat_\epsilon^\circ(2d,2d)$. Then,
\begin{enumerate}[(i)]
    \item $P\cdot x = P\cdot y$ if and only if $r_{i,j}(x)=r_{i,j}(y)$ for all $i,j \in \mathcal{C}$;
    \item $P\cdot x \subseteq \overline{P\cdot y}$ if and only if $r_{i,j}(x)\leq r_{i,j}(y)$ for all $i,j \in \mathcal{C}$.
\end{enumerate}
\end{proposition}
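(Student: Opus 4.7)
The proof hinges on a \emph{$P$-invariance lemma}: for $p\in P$, $M\in\Mat(2d,2d)$, and $(i,j)\in\mathcal{C}^2$, we have $r_{i,j}(pMp^t)=r_{i,j}(M)$. I would prove this by a direct block computation. Since $i,j$ are at block boundaries of $P$ and $p$ is block lower triangular, for $s\leq i$ and $t\leq j$ the sum $(pMp^t)_{s,t}=\sum_{k,l}p_{s,k}M_{k,l}p_{t,l}$ is supported on $k\leq i$ and $l\leq j$. Thus the northwest $i\times j$ submatrix of $pMp^t$ factors as $\tilde{p}\cdot M_{i\times j}\cdot\tilde{q}^t$, where $\tilde p,\tilde q$ are the northwest $i\times i$ and $j\times j$ principal submatrices of $p$. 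Each is block lower triangular with invertible diagonal blocks (inherited from $p$), hence itself invertible, so rank is preserved.

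Given this lemma, direction (ii)$\Rightarrow$ is immediate: the set $C_y:=\{M\in\Mat_\epsilon^\circ(2d,2d)\mid r_{i,j}(M)\leq r_{i,j}(y),\ (i,j)\in\mathcal{C}^2\}$ is closed by lower semicontinuity of rank, $P$-stable by the lemma, and contains $y$, so $\overline{P\cdot y}\subseteq C_y$. Direction (i)$\Rightarrow$ is analogous. Moreover, (i)$\Leftarrow$ follows formally from (ii) applied in both directions: equal $\mathcal{C}^2$-ranks yield mutual orbit closure containment, and orbits being open in their closures then forces the orbits themselves to coincide.

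The main obstacle is (ii)$\Leftarrow$. My plan is to reduce to Proposition \ref{prop:matrixNWranks}. Since $P\cdot y$ is irreducible, $B_-\subseteq P$, and $B_-$-orbits in $\Mat_\epsilon^\circ(2d,2d)$ are finite in number (Proposition \ref{prop:BOrbitsMatrixSpaceInvolutions}), there is a unique dense $B_-$-orbit $B_-\cdot y_0\subseteq P\cdot y$, and hence $\overline{P\cdot y}=\overline{B_-\cdot y_0}$. One may take $y_0$ to simultaneously maximize every rank $r_{i,j}$ over $P\cdot y$: for each $(i,j)$ the locus where $r_{i,j}$ attains its maximum is open, and a finite intersection of nonempty open subsets of the irreducible variety $P\cdot y$ remains dense, hence nonempty. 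By Proposition \ref{prop:matrixNWranks}(ii) applied to $y_0$, it suffices to check $r_{i,j}(x)\leq r_{i,j}(y_0)$ for all $(i,j)$. On $\mathcal{C}^2$ this is immediate from the invariance lemma and the hypothesis. Off $\mathcal{C}^2$, monotonicity of northwest ranks gives $r_{i,j}(x)\leq\min(i,j,r_{\hat i,\hat j}(x))\leq\min(i,j,r_{\hat i,\hat j}(y))$, where $\hat i,\hat j\in\mathcal{C}$ are the smallest elements of $\mathcal{C}$ weakly above $i,j$. The technical heart is then to verify the matching inequality $\min(i,j,r_{\hat i,\hat j}(y))\leq r_{i,j}(y_0)$, i.e., that the maximum of $r_{i,j}$ over $P\cdot y$ saturates the trivial upper bound coming from the $\mathcal{C}^2$-data. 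This I would establish by exhibiting an explicit $p\in P$ whose action on $y$ realizes this bound, exploiting the block-lower-triangular structure to combine rows and columns across blocks while leaving the $\mathcal{C}^2$-ranks unchanged.
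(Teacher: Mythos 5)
Your invariance lemma, both forward implications, the deduction of (i) from (ii), and the reduction of (ii)$\Leftarrow$ to the dense $B_-$-orbit representative $y_0$ combined with Proposition \ref{prop:matrixNWranks} are all correct, and this skeleton matches the paper's proof (which likewise passes to the unique maximal $B_-$-orbit inside each $P$-orbit). The problem is that the step you label the ``technical heart'' is exactly where the entire content of the proposition lies, and you have not proved it; worse, the inequality you propose to prove is false as stated. You claim $\min\bigl(i,j,r_{\hat i,\hat j}(y)\bigr)\leq r_{i,j}(y_0)$ for all $(i,j)$. Take $\epsilon=-1$, two blocks of size $2$ (so $2d=4$, $\mathcal{C}=\{2,4\}$), $(i,j)=(1,1)$, and any invertible skew-symmetric $y$ with $r_{2,2}(y)\geq 1$: the left side is $1$, but $r_{1,1}(z)=0$ for \emph{every} skew-symmetric $z$, so no $p\in P$ can realize the bound. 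More generally, $r_{i,i}(y_0)$ is always even (the northwest $i\times i$ corner of a skew-symmetric matrix is skew-symmetric), so the chain $r_{i,j}(x)\leq\min(i,j,r_{\hat i,\hat j}(y))\leq r_{i,j}(y_0)$ breaks at (near-)diagonal positions; salvaging it would require exploiting the (skew-)symmetry of $x$ itself there (parity of its diagonal ranks), which your argument never does. Even for $\epsilon=+1$ there is a diagonal subtlety: if the maximal representative has the $2\times2$ antidiagonal pattern in rows and columns $i,i+1$, the transposition in $P$ swapping $i$ and $i+1$ fixes it, and one needs a non-permutation element of $P$ (the paper's rotation matrix $F$) to raise $r_{i,i}$.

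The paper avoids comparing against the naive bound at all positions. It invokes Proposition \ref{prop:matrixNWranks2}, i.e.\ the orthogonal/symplectic essential sets $\mathcal{E}^\epsilon(w)$, whose boxes lie weakly (resp.\ strictly) below the diagonal and already suffice to cut out $\overline{B_-\cdot y_0}$; it then shows $\mathcal{E}^\epsilon(w_{y_0})\subseteq\mathcal{C}\times\mathcal{C}$ by a maximality argument (if an essential box sat off a block corner, an explicit transposition in $P$, or the matrix $F$ in the symmetric diagonal case, would strictly increase a northwest rank of the maximal $B_-$-representative, a contradiction). With that containment, only the $\mathcal{C}\times\mathcal{C}$ inequalities are ever needed and the hypothesis finishes the proof. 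So either adopt this essential-set route, or replace your matching inequality by a parity-corrected statement and actually prove it; as written, the proposal has a genuine gap at its central step.
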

\begin{proof}
(ii) For each $P$-orbit $P\cdot x$, there is a unique maximal
$B_-$-orbit contained in it, and we denote a representative of this orbit by $x'$ in this proof.
This representative satisfies $\overline{B_-\cdot x'} = \overline{P\cdot x}$. By Proposition \ref{prop:matrixNWranks2}, we have 
$\overline{ B_-\cdot x'}\subseteq \overline{B_-\cdot y'}$ if and only if $r_{i,j}(x')\leq r_{i,j}(y')$ for all $(i,j)\in  \mathcal{E}^\epsilon(w_y')$. 
Now we claim that because $\overline{B_-\cdot y'}$ is a $P$-orbit closure, 
we have a containment of index sets $ \mathcal{E}^\epsilon(w_y') \subseteq \mathcal{C} \times \mathcal{C}$.
Assume for contradiction this is not true, say $(i,j)\in \mathcal{E}^\epsilon(w_y')$ but not in $\mathcal{C} \times \mathcal{C}$, specifically that $i \not \in \mathcal{C}$ (the case $j \not\in \mathcal{C}$ is similar). 
Then $(i,j)$ is in the orthogonal or symplectic Rothe diagram of $w'_y$ but $(i+1,j)$ is not in the diagram, implying that $r_{i,j}(w'_y)<r_{i+1,j}(w'_y)$.
Furthermore, the matrix $w'_y$ has a 1 in row $i+1$ and some column $k$ with $1 \leq k \leq j$, and has all 0s in row $i$ and these same columns. Recall that if $\epsilon=-1$ the permutation matrix $w'_y$ is not an element of $\Mat_\epsilon^\circ(2d,2d)$. Thus, we denote by $[w_y']_\epsilon$ the  matrix obtained from $w_y'$ by multiplying all entries (stricly) under the diagonal by $\epsilon$ so that now $[w_y']_\epsilon\in\Mat_\epsilon^\circ(2d,2d)$ and clearly $r_{i,j}(w_y')=r_{i,j}([w_y']_\epsilon)$.

By definition of orthogonal and symplectic diagrams, we have $i \geq j$; we first treat the case $i > j$.
Since $i \not\in \mathcal{C}$, the parabolic subgroup $P$ contains the matrix $E$ obtained by  switching rows $i$ and $i+1$ of the identity matrix, so $E\cdot [w'_y]_\epsilon \in P\cdot [w'_y]_\epsilon$. 
Because of the location of the 1 in row $i+1$ of $w'_y$, the matrix 
$E\cdot [w'_y]_\epsilon= E[w'_y]_\epsilon E^t$ has the $\epsilon 1$ from row $i+1$ of $[w'_y]_\epsilon$ moved up to row $i$, and switching columns $i$ and $i+1$ does not take any $\epsilon 1$s out of the northwest $i \times j$ submatrix
because of our assumption that $i > j$.
Thus we get $r_{i,j}(E\cdot [w'_y]_\epsilon)=r_{i+1,j}([w'_y]_\epsilon)=r_{i+1,j}(w'_y) > r_{i,j}(w'_y)$, contradicting maximality of the $B$-orbit of $w'_x$ in its $P$-orbit.

This completes the proof for $\epsilon = -1$ as $i>j$ in this setting.

When $\epsilon = +1$, we have
$w'_y=[w_y']_\epsilon$ and we must also consider the case $i=j$.  
The case of $i=j$ is similar to the previous one, 
except that if $w'_y$ has 1s in entries $(i+1,i)$ and $(i,i+1)$, we get $E\cdot w'_y=w'_y$ so the argument above does not work.  One must instead use the matrix $F$ defined as the direct sum of matrices below: 
\begin{equation}
F:= I_{i-1}\oplus \begin{bmatrix} \frac{1}{\sqrt{2}} &\frac{1}{\sqrt{2}} \\ \frac{-1}{\sqrt{2}} & \frac{1}{\sqrt{2}} \end{bmatrix} \oplus I_{2d-i-1}
\end{equation}
Then the computation 
\begin{equation}
    \begin{bmatrix} \frac{1}{\sqrt{2}} &\frac{1}{\sqrt{2}} \\ \frac{-1}{\sqrt{2}} & \frac{1}{\sqrt{2}} \end{bmatrix} \cdot 
    \begin{bmatrix} 0 & 1\\ 1 & 0 \end{bmatrix} = \begin{bmatrix} \frac{1}{\sqrt{2}} &\frac{1}{\sqrt{2}} \\ \frac{-1}{\sqrt{2}} & \frac{1}{\sqrt{2}} \end{bmatrix} \begin{bmatrix} 0 & 1\\ 1 & 0 \end{bmatrix} \begin{bmatrix} \frac{1}{\sqrt{2}} &\frac{1}{\sqrt{2}} \\ \frac{-1}{\sqrt{2}} & \frac{1}{\sqrt{2}} \end{bmatrix}^t =
    \begin{bmatrix} 1 & 0\\ 0 & -1 \end{bmatrix}
\end{equation}
shows that $r_{i,i}(F\cdot w'_y)=r_{i+1,i}(w'_y) > r_{i,i}(w'_y)$ and argument is completed as in the previous case.

(i) 
We have $P\cdot x = P\cdot y \Leftrightarrow \overline{P\cdot x} = \overline{P\cdot y} \Leftrightarrow$ both  $P\cdot x \subseteq \overline{P\cdot y}$ and $P\cdot y \subseteq \overline{P\cdot x}$.
Then by (ii) this is equivalent to both $r_{i,j}(x)\leq r_{i,j}(y)$ and $r_{i,j}(y)\leq r_{i,j}(x)$ for all $i,j \in \mathcal{C}$.
\end{proof}

\section{Identifying symmetric bipartite type \texorpdfstring{$A$}{Lg} quiver orbit closures with Mars-Springer varieties}\label{sect:ZmapTotal}

Throughout this section, let $Q$ be a symmetric type $A$ quiver with bipartite orientation, so that every vertex is either a source or a sink, and without loss of generality we can   label $Q$ as in \eqref{eq:typea}. 
\begin{equation}\label{eq:typea}
\vcenter{\hbox{\begin{tikzpicture}[point/.style={shape=circle,fill=black,scale=.5pt,outer sep=3pt},>=latex]
   \node[outer sep=-2pt] (y0) at (-1,0) {$y_0$};
   \node[outer sep=-2pt] (x1) at (0,1) {$x_1$};
  \node[outer sep=-2pt] (y1) at (1,0) {$y_1$};
   \node[outer sep=-2pt] (x2) at (2,1) {$x_2$};
  \node[outer sep=-2pt] (y2) at (3,0) {$y_2$};
  \node (dots) at (3.75, 0.5) {$\cdots\cdots$};
   \node[outer sep=-2pt] (x3) at (5,1) {$x_{n-1}$};
  \node[outer sep=-2pt] (y3) at (6,0) {$y_{n-1}$};
  \node[outer sep=-2pt] (x4) at (7,1) {$x_n$};
  \path[->]
  	(x1) edge node[left] {$\alpha_1$} (y0) 
	(x1) edge node[left] {$\beta_1$} (y1)
  	(x2) edge node[left] {$\alpha_2$} (y1) 
	(x2) edge node[left] {$\beta_2$} (y2)
	(x3) edge node[left] {$\beta_{n-1}$} (y3)
  	(x4) edge node[left] {$\alpha_n$} (y3); 
   \end{tikzpicture}}} .
\end{equation}
Let $\bd$ be a dimension vector for $Q$ and let $d_x = \sum_{i=1}^n\bd(x_i)$ and $d_y = \sum_{i=0}^{n-1}\bd(y_i)$. 
For $\bd$ a symmetric dimension vector, we necessarily have that $d_x = d_y$.

In this section, we show that the rank conditions which characterize symmetric quiver orbit closures (see Section \ref{ssec:RankConditions}) are equivalent to those that define Mars-Springer varieties from Section \ref{subsec:MSslices}  (see Sections \ref{ssec:symmetricZmap} and \ref{sect:quiverP}). 

\subsection{Rank conditions determining orbits in $\srep_Q(\bd)$}\label{ssec:RankConditions}
We now show how orbits in $\srep_Q(\bd)$ are determined by rank conditions.
Following \cite[\S3.1]{KR15}, we call a connected subquiver $J$ of $Q$ an \emph{interval} in $Q$, and denote this by $J\subseteq Q$.
To each interval $J \subset Q$, we have an associated rank function $r_J \colon \rep_Q(\bd) \to \ZZ_{\geq 0}$ defined as follows.  Given $V \in \rep_Q(\bd)$, form the matrix
\begin{equation}\label{eq:VQ}
V_Q:= \begin{bmatrix}
& & & & V_{\alpha_1}\\
& & & V_{\alpha_2} & V_{\beta_1}\\
& & V_{\alpha_3} & V_{\beta_2}\\
& \Ddots&  \Ddots & \\
V_{\alpha_n} & V_{\beta_{n-1}} \\
\end{bmatrix},
\end{equation}
then let $r_J(V)$ be the rank of the unique submatrix containing maps associated exactly to the arrows in the interval $J$.
Each $r_J$ is constant on $GL(\bd)$-orbits in $\rep_Q(\bd)$.
Using the embedding \eqref{eq:sreptorep}, we may also use the above notations with symmetric representations, and we have the following as a direct consequence of Theorem \ref{thm:DWorbits} and \cite[Prop.~3.1]{KR15}.

\begin{proposition}\label{prop:quiverranksorbits}
    For $V, W \in \srep_Q(\bd)$, we have $W \in G(\bd)\cdot V$ if and only if $r_J(W)=r_J(V)$ for all intervals $J \subseteq Q$.
\end{proposition}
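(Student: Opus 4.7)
The plan is to deduce this proposition directly from the two ingredients the authors cite: Theorem \ref{thm:DWorbits} (Derksen--Weyman) and \cite[Prop.~3.1]{KR15}, using the closed embedding $\srep_Q(\bd)\hookrightarrow\rep_Q(\bd)$ from \eqref{eq:sreptorep} together with the group inclusion $G(\bd)\hookrightarrow GL(\bd)$ from \eqref{eq:GdGLd}. The key observation is that the rank functions $r_J$ were defined using the matrix $V_Q$ of \eqref{eq:VQ}, which is built out of the arrow maps $V_a$ for $a\in Q_1$; thus $r_J$ only sees the underlying ordinary representation of $Q$. Consequently, the value of $r_J$ on a symmetric representation coincides with the value of $r_J$ on its image under \eqref{eq:sreptorep}, and \cite[Prop.~3.1]{KR15} applies directly.

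Concretely, I would proceed as follows. For the forward implication, suppose $W\in G(\bd)\cdot V$. Via \eqref{eq:GdGLd}, the $G(\bd)$-orbit of $V$ is contained in its $GL(\bd)$-orbit, so $W\in GL(\bd)\cdot V$ inside $\rep_Q(\bd)$. Since the $r_J$ are constant on $GL(\bd)$-orbits (as recorded just before the proposition, quoting \cite[Prop.~3.1]{KR15}), we conclude $r_J(W)=r_J(V)$ for every interval $J\subseteq Q$. For the reverse implication, assume $r_J(V)=r_J(W)$ for all intervals $J\subseteq Q$. By \cite[Prop.~3.1]{KR15}, this equality of all interval ranks characterizes $GL(\bd)$-orbits on $\rep_Q(\bd)$ in the bipartite type $A$ setting, so $V$ and $W$ lie in the same $GL(\bd)$-orbit. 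Then Theorem \ref{thm:DWorbits} upgrades this to an equality of $G(\bd)$-orbits, giving $W\in G(\bd)\cdot V$.

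There is really no substantial obstacle: the proposition is a formal consequence of the two cited results once one notices that the embedding \eqref{eq:sreptorep} is compatible with the arrow-by-arrow construction of $V_Q$, so that the rank invariants transport without modification. The only minor point to verify (and this can be stated in one sentence) is that $r_J$, as defined in \eqref{eq:VQ}, depends only on the tuple of arrow maps of a representation of the underlying ordinary quiver $Q$, hence is unambiguous when applied to a symmetric representation through the embedding $\srep_Q(\bd)\hookrightarrow\rep_Q(\bd)$.
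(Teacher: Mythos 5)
Your proposal is correct and follows exactly the paper's route: the paper derives Proposition \ref{prop:quiverranksorbits} as a direct consequence of Theorem \ref{thm:DWorbits} and \cite[Prop.~3.1]{KR15}, using precisely the observation that $r_J$ depends only on the underlying ordinary representation via the embedding \eqref{eq:sreptorep}. You have merely spelled out the two implications that the paper leaves implicit.
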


The following proposition gives rank function criteria for an ordinary representation of $Q$ to be isomorphic to a symmetric representation of that quiver, recalling from Theorem \ref{thm:DWorbits} that this isomorphism can equivalently be interpreted as representations of $Q$ or $Q^\epsilon$. It is not used until Section \ref{sect:mainZperm2}.

\begin{proposition}\label{prop:regtosym} Let $\bd$ be a symmetric dimension vector for the bipartite type $A$ quiver $Q$.
Let $V \in \rep_Q(\bd)$ be a representation such that $r_J(V) = r_{\tau J}(V)$ for all intervals $J \subseteq Q$.
Then,
\begin{enumerate}[(i)]
    \item there exists $W \in \srep_Q(\bd)$, with $\epsilon = +1$, such that $W \in GL(\bd)\cdot V$ with respect to the embedding \eqref{eq:sreptorep};
    \item if, additionally, $r_J(V)$ is even for each interval $J$ that satisfies $J = \tau J$, then there exists $W \in \srep_Q(\bd)$, with $\epsilon = -1$, such that $W \in GL(\bd)\cdot V$ with respect to the embedding \eqref{eq:sreptorep}.
\end{enumerate}
Furthermore, in both cases the representation $W$ is unique up to isomorphism.
\end{proposition}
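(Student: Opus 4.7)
The plan is to apply Gabriel's theorem to $V$, translate the rank hypothesis into a $\tau$-invariance (and, for (ii), a parity) condition on the multiplicities of interval indecomposables, and then build $W$ as a direct sum of canonical symmetric indecomposables with the same multiplicities.

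First I would write $V \cong \bigoplus_{J} M_J^{m_J(V)}$ as $J$ ranges over intervals of $Q$, and recall the standard M\"obius-type inversion formula $m_J = r_J - r_{J^+} - r_{J^-} + r_{J^{+-}}$ expressing the multiplicities in terms of rank functions, where $J^+$ and $J^-$ denote the one-arrow extensions of $J$ on either end and $J^{+-}$ the extension on both ends (with the convention $r_{J'}:=0$ if $J'$ is not a valid interval of $Q$). Because $\tau$ reverses orientation, it interchanges $J^+$ and $J^-$, so the hypothesis $r_J(V) = r_{\tau J}(V)$ feeds through the formula to give $m_J(V) = m_{\tau J}(V)$ for every $J$. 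For case~(ii), when $J = \tau J$ we have $J^- = \tau J^+$, so $r_{J^+}(V) = r_{J^-}(V)$, and $J^{+-}$ is again $\tau$-fixed; the formula collapses to $m_J = r_J - 2r_{J^+} + r_{J^{+-}}$, which is even under the additional hypothesis that $r_{J'}(V)$ is even for every $\tau$-fixed interval $J'$.

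Next, to construct $W$, I would group intervals into $\tau$-orbits. For each size-two orbit $\{J, \tau J\}$ with $J \neq \tau J$, include $m_J(V)$ copies of $M_J \oplus M_{\tau J}$ equipped with the canonical hyperbolic $\epsilon$-symmetric form, which is available for either sign $\epsilon$. For each $\tau$-fixed $J$, include $m_J(V)$ copies of $M_J$ with its canonical $(+1)$-symmetric form in case~(i), or $m_J(V)/2$ copies of $M_J \oplus M_J$ with the canonical hyperbolic skew-symmetric form in case~(ii). By construction, the underlying ordinary representation of the resulting $W$ is isomorphic to $V$.

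Uniqueness will follow at once from Theorem~\ref{thm:DWorbits}: any two symmetric representations sitting in a common $GL(\bd)$-orbit automatically lie in a common $G(\bd)$-orbit, hence are isomorphic as symmetric representations. The main obstacle I anticipate is justifying the symmetric structures used in the construction, particularly that each $\tau$-fixed $M_J$ supports a canonical $(+1)$-symmetric form while generally supporting no $(-1)$-form. In the bipartite setting $Q_0^\tau = \emptyset$ and $\tau$ has a single fixed arrow $\alpha$ of sign $\epsilon$; the compatibility on $\alpha$ reduces to the scalar identity $1 = \epsilon \cdot 1$ (since $(M_J)_\alpha$ is a $1 \times 1$ identity block), which holds precisely for $\epsilon = +1$, and more generally this is part of the Derksen--Weyman classification of indecomposable symmetric representations of type $A$ symmetric quivers~\cite{DW02}.
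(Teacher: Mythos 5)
Your proposal is correct and follows essentially the same route as the paper's proof: decompose $V$ into interval indecomposables, use the rank-to-multiplicity inversion (the rank-to-lace formula of \cite{KR15}) to get $\tau$-invariance and, in case (ii), evenness of the multiplicities, equip $\mathbb{I}_J \oplus \mathbb{I}_{\tau J}$, $\mathbb{I}_J$ (for $\epsilon=+1$), and $\mathbb{I}_J \oplus \mathbb{I}_J$ (for $\epsilon=-1$) with the evident symmetric structures, and deduce uniqueness from Theorem \ref{thm:DWorbits}. Your explicit check on the $\tau$-fixed arrow explaining why a $\tau$-fixed $\mathbb{I}_J$ admits only a $+1$-form is exactly the direct verification the paper leaves to the reader.
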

\begin{proof}
We consider the Krull-Schmidt decomposition
\[
V\cong \bigoplus_{J\subseteq Q} s_J \mathbb{I}_J,
\]
where $s_J \in \ZZ_{\geq 0}$ and $\mathbb{I}_J$ denotes the indecomposable representation supported on the interval $J\subseteq Q$.
To prove the statement, it suffices to show that $V$ can be interpreted as symmetric representation of $Q^\epsilon$ as in Section \ref{sec:symreps}, since each such symmetric representation corresponds to a point of $\srep_Q(\bd)$ by choosing a basis.  Since a direct sum of symmetric representations is symmetric, we can distribute over direct sums at our convenience. 

For intervals $J$ such that $\tau J \neq J$, one can check directly that $\mathbb{I}_J \oplus \mathbb{I}_{\tau J}$ is symmetric.
It follows from the rank-to-lace formula \cite[Lemma 3.4]{KR15} that $s_J = s_{\tau J}$ for all intervals $J\subseteq Q$, 
so also $s_J \mathbb{I}_J \oplus s_{\tau J} \mathbb{I}_{\tau J} = s_J (\mathbb{I}_J \oplus \mathbb{\tau J})$ is symmetric.

Now consider intervals $J$ such that $\tau J = J$. 
To complete the proof of (i),  
one can directly check that each summand $\mathbb{I}_J$ is already symmetric, when $\epsilon = +1$. 

To complete the proof of (ii), assume that  $r_J(V)$ is even for each interval $J$ such that $J = \tau J$. Applying the rank-to-lace formula \cite[Lemma 3.4]{KR15}, we conclude 
that $s_J$ is even for each interval $J$ such that $J = \tau J$. 
Then one can directly check that $\mathbb{I}_J \oplus \mathbb{I}_J$ is symmetric, when $\epsilon = -1$, so $s_J\mathbb{I}_J$ is symmetric.
Having covered all intervals $J \subseteq Q$, we see that $V$ is a direct sum of symmetric representations and thus symmetric itself.

Now the furthermore part follows because if $W'$ is another symmetric representation in the same $GL(\bd)$-orbit as $V$, then it is also in the same $G(\bd)$-orbit by Theorem \ref{thm:DWorbits}.
\end{proof}

\subsection{The bipartite type $A$ Zelevinsky map}\label{sec:ordinaryZmap}

In this subsection, we recall the bipartite type $A$ Zelevinsky map from \cite{KR15}. Let $Q$ be a bipartite type $A$ quiver of the form \eqref{eq:typea} and $\rep_Q(\bd)$ a representation space for $Q$. Let $d_x = \sum_{i = 1}^n \bd(x_i)$ and $d_y = \sum_{i=0}^{n-1}\bd(y_i)$, and define the space of matrices $Y_\bd$ as
\begin{equation}
Y_\bd:= \left\{\begin{bmatrix} X & I_{d_y}\\I_{d_x}&0\end{bmatrix} : X\in \text{Mat}(d_y, d_x) \right\},
\end{equation}
where $I_{d_y}$ and $I_{d_x}$ are $d_y \times d_y$ and $d_x\times d_x$ identity matrices. 
Define the \emph{bipartite Zelevinsky map} as
\begin{equation}\label{eq:bipartiteZmap}
    \zeta\colon\rep_Q(\bd)\hookrightarrow Y_{\bd}, \quad \zeta(V) = \begin{bmatrix} V_Q &I_{d_y} \\ I_{d_x}&0 \end{bmatrix},
\end{equation}
where $V_Q$ is as in \eqref{eq:VQ}. 
There is an associated retraction
\begin{equation}\label{eq:piretract}
\pi\colon Y_\bd \to \rep_Q(\bd), \quad \pi\circ \zeta=id_{\rep_Q(\bd)}
\end{equation}
obtained by extracting submatrices as in \eqref{eq:VQ} and assigning each to the corresponding arrow of $Q$.
There is also an associated embedding
\begin{equation}\label{eq:GLdtoP}
    \xi\colon GL(\bd) \hookrightarrow GL(d_x+d_y)
\end{equation}
sending a collection $(g_z)_{z \in Q_0}$ to the block diagonal matrix with $g_{y_0}, \dotsc, g_{y_{n-1}}, \dotsc, g_{x_n}, \dotsc, g_{x_1}$ along the diagonal from top-left to bottom-right.  The embedding $\zeta$ of \eqref{eq:bipartiteZmap} is equivariant with respect to this embedding of groups.

Using the bipartite Zelevinsky map, one obtains an identification between the collection of rank conditions which characterize the $GL(\bd)$-orbit through $V\in \text{rep}_Q(\bd)$ (see \cite[Prop.~3.1]{KR15}) and a collection of ranks of northwest justified submatrices of $\zeta(V)$. This identification is an intermediate step in the proof of the main theorem of \cite{KR15}, which says that each type $A$ quiver orbit closure is isomorphic, up to a smooth factor, to an open subvariety of a Schubert variety in a type $A$ flag variety. In the present paper, we also use this identification of rank conditions as an intermediate step in the proof of our main theorem  (Theorem \ref{thm:mainTheorem}).

\subsection{Symmetric Zelevinsky map}\label{ssec:symmetricZmap} 
In this section, we define a symmetric quiver analog of the bipartite Zelevinsky map recalled in the previous subsection. Let $Q^\epsilon$ be a symmetric bipartite quiver labeled as in \eqref{eq:typea}, where $\epsilon=s(a)$ with $a$ the unique arrow fixed by the involution of $Q$.  Fix a symmetric dimension vector $\bd$.
Let $d = d_x = d_y$ and consider the $2d\times 2d$ matrix $x = \begin{bmatrix}0&J\\I &0 \end{bmatrix}$.
With $G=GL(2d)$ and $K$ as in Section \ref{subsec:Conventions} using the specific form $\Omega_\epsilon$ in \eqref{eq:omegapm}, so always either an orthogonal or symplectic subgroup,
we let $S=S_{\overline{x}}\subseteq G/K$ be the corresponding Mars-Springer slice as in \eqref{eq:ourMSslice}. By Lemma \ref{lem:MSSIsomorphism}, we identify this Mars-Springer slice with the matrix space
\begin{equation}\label{eq:MatrixMSSlice}
 Y^\epsilon_d = 
\left\{
\begin{bmatrix}
\Sigma& I_d\\ \epsilon I_d &0\end{bmatrix} \mid \Sigma \in \Mat_\epsilon(d, d) \right\}
\end{equation}
in \eqref{eqn:MatrixSlice} via the embedding \eqref{eqn:MPembedding}.

We also define a modification $V_Q^\epsilon$ which is the same as the matrix $V_Q$ in \eqref{eq:VQ}, except that lower-left $n-1$ submatrices $V_{\alpha_n}, V_{\beta_{n-1}},\dotsc$
are multiplied by the sign $\epsilon$.
We make the following easy but important observation:
\begin{lemma}\label{lem:symmNW}
If $V\in \srep_Q(\bd)$, then $V^\epsilon_Q\in \Mat_\epsilon(d,d)$.
\end{lemma}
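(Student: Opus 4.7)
The plan is to verify $V_Q^\epsilon \in \Mat_\epsilon(d,d)$ by a direct block-by-block comparison, matching the transposition symmetry of the $d\times d$ matrix with the symmetric representation relations from \S\ref{sec:symreps}. As a preliminary observation, since $Q$ is bipartite the involution $\tau$ exchanges sources and sinks, so $Q_0^\tau = \emptyset$; hence every $\Omega_z$ in \eqref{eq:sreptorep} degenerates to the identity, the embedding formula simplifies to $W_b = V_{\tau(b)}^t$ for each $b\in Q_1^-$, and the fixed-arrow condition from \S\ref{sec:symreps}(i) reads $V_b = \epsilon V_b^t$ for $b\in Q_1^\tau$.

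Next I would set up block coordinates, thinking of $V_Q$ as an $n\times n$ array of blocks. The $(i,j)$-th block of $V_Q$ has size $\bd(y_{i-1})\times \bd(x_{n-j+1})$, and the symmetry $\bd(z)=\bd(\tau z)$ translates to $\bd(y_{i-1})=\bd(x_{n-i+1})$, which ensures that matrix transposition swaps block position $(i,j)$ with $(j,i)$ in a way that respects the block decomposition. Inspecting \eqref{eq:VQ} places $V_{\alpha_k}$ at $(k,n+1-k)$ and $V_{\beta_k}$ at $(k+1,n+1-k)$, and the formulas $\tau(\alpha_k)=\alpha_{n+1-k}$ and $\tau(\beta_k)=\beta_{n-k}$ show that $\tau$ acting on arrows agrees with transposition on the corresponding block positions.

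The key structural observation is that the $2n-1$ nonzero blocks of $V_Q$ stratify cleanly by the comparison of $i$ and $j$: the $n-1$ blocks with $i<j$ are indexed by the positive arrows, the $n-1$ blocks with $i>j$ are indexed by the negative arrows, and the unique block with $i=j$ is indexed by the single $\tau$-fixed arrow (which is $\alpha_{(n+1)/2}$ when $n$ is odd and $\beta_{n/2}$ when $n$ is even). Consequently, the $\epsilon$-scaling defining $V_Q^\epsilon$ is precisely the scaling of all blocks strictly below the main block diagonal, leaving the upper-triangular and diagonal blocks untouched.

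To finish the proof I would verify the identity $(V_Q^\epsilon)^t = \epsilon V_Q^\epsilon$ one block type at a time. For a paired arrow $a$ and $\tau(a)$ at positions $(i,j)$ and $(j,i)$ with $i<j$, the relation $V_{\tau(a)}=V_a^t$ combined with the $\epsilon$-scaling at $(j,i)$ gives $((V_Q^\epsilon)_{j,i})^t = (\epsilon V_a^t)^t = \epsilon V_a = \epsilon (V_Q^\epsilon)_{i,j}$, and the symmetric identity at $(j,i)$ follows from $\epsilon^2=1$. For the fixed arrow $b$ at a diagonal block $(k,k)$, the equation $V_b=\epsilon V_b^t$ gives the required identity directly. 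The only obstacle here is bookkeeping, namely keeping track of the pairing of arrows under $\tau$ and aligning it with the block structure of the transpose; there is no conceptual difficulty.
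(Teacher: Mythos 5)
Your verification is correct: the block bookkeeping (positive arrows strictly above the block diagonal, negative arrows strictly below, the unique $\tau$-fixed arrow on the diagonal, and the $\epsilon$-scaling of exactly the below-diagonal blocks), together with $Q_0^\tau=\emptyset$ making all $\Omega_z$ trivial in \eqref{eq:sreptorep}, gives $(V_Q^\epsilon)^t=\epsilon V_Q^\epsilon$ exactly as claimed. The paper states this lemma without proof as an ``easy but important observation,'' and your argument is precisely the intended routine check, now with the details filled in.
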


In light of Lemma \ref{lem:symmNW}, we can define the \textbf{symmetric Zelevinsky map} to be
\begin{equation}\label{eq:zetaKdef}
    \zeta^\epsilon:\text{srep}_Q(\bd)\rightarrow Y^\epsilon_d,  \quad \zeta^\epsilon(V) = \begin{bmatrix} V^\epsilon_Q &I \\ \epsilon I_d &0 \end{bmatrix}.
\end{equation}

The symmetric Zelevinsky map is closely related to the bipartite Zelevinsky map. In particular, since each $V\in \text{srep}_Q(\bd)$ is also an ordinary representation of a bipartite type $A$ quiver via the embedding \eqref{eq:sreptorep}, we may apply both $\zeta$ and $\zeta^\epsilon$ to $V$, and
it is easy to see that
$r_{i,j}(\zeta^\epsilon(V)) = r_{i,j}(\zeta(V))$
for all $1\leq i,j\leq 2n$.
Now recalling the definition of $\mathcal{C}$ directly before Proposition \ref{prop:POrbitNWRanks} 
with respect to the sequence
\begin{equation}\label{eq:blocksizes}
    ({\bf{d}}(y_0), {\bf{d}}(y_1), \dots, {\bf{d}}(y_{n-1}), {\bf{d}}(x_{n}),\dots, {\bf{d}}(x_2),{\bf{d}}(x_1)),
\end{equation} 
we get the following immediately from \cite[Lem.~A.3]{KR15} and its proof.

\begin{lemma}\label{lem:NWandquiverranks}
Let $V,W \in \srep_Q(\bd)$.  Then $r_J(V)=r_J(W)$ for all 
$J$ if and only if $r_{i,j}(\zeta^\epsilon(V))=r_{i,j}(\zeta^\epsilon(W))$ for all $i,j \in \mathcal{C}$. The analogous statement holds upon replacing $=$ with $\leq$.
\end{lemma}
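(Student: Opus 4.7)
The plan is to reduce the statement to its analog for the ordinary bipartite Zelevinsky map, which is precisely \cite[Lem.~A.3]{KR15} together with its proof, as flagged in the discussion preceding the lemma.

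The first step is to record the key observation already noted in the text: the two matrices $V_Q^\epsilon$ and $V_Q$ differ only in that certain blocks of $V_Q$ have been multiplied by the sign $\epsilon = \pm 1$. Because these modifications are located in the ``lower-left'' anti-diagonal blocks of the Zelevinsky matrix, they can be realized by scaling entire rows (equivalently, columns) of $\zeta(V)$ by the nonzero scalar $\epsilon$. Row/column scaling by a nonzero constant does not change the rank of any submatrix, so
\[
r_{i,j}\bigl(\zeta^\epsilon(V)\bigr) = r_{i,j}\bigl(\zeta(V)\bigr) \qquad \text{for all } 1 \leq i,j \leq 2d,
\]
and likewise for $W$. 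This allows us to replace $\zeta^\epsilon$ by $\zeta$ throughout the statement without changing any rank values.

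Next, I would invoke \cite[Lem.~A.3]{KR15}, which establishes exactly the desired equivalence for the ordinary bipartite Zelevinsky map $\zeta$: for any ordinary representations $V', W' \in \rep_Q(\bd)$, the equality $r_J(V') = r_J(W')$ for all intervals $J \subseteq Q$ is equivalent to $r_{i,j}(\zeta(V')) = r_{i,j}(\zeta(W'))$ for all $i,j \in \mathcal{C}$, where $\mathcal{C}$ is determined by the sequence of block sizes~\eqref{eq:blocksizes}. Since the embedding \eqref{eq:sreptorep} makes $\srep_Q(\bd)$ a closed subvariety of $\rep_Q(\bd)$ on which the rank functions $r_J$ agree with those of $\rep_Q(\bd)$, applying this to $V' = V$ and $W' = W$ yields the equality case of the lemma.

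For the $\leq$ version, the proof of \cite[Lem.~A.3]{KR15} actually provides explicit affine relations expressing each $r_J(V)$ as a monotone $\mathbb{Z}$-linear combination of northwest ranks $r_{i,j}(\zeta(V))$ indexed by $(i,j) \in \mathcal{C} \times \mathcal{C}$, together with an inverse system of relations in the other direction; these relations pass inequalities in either direction. Combining this with the sign-invariance of ranks from step one gives the ``$\leq$'' analogue. The only potential subtlety is ensuring that the block-size sequence \eqref{eq:blocksizes} used to define $\mathcal{C}$ matches the one used in \cite[Lem.~A.3]{KR15}, which is clear from the construction of $\zeta^\epsilon$ in \eqref{eq:zetaKdef}. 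I do not anticipate a serious obstacle here; this lemma is essentially a translation of an existing result across the sign-twist introduced by the symmetric Zelevinsky map.
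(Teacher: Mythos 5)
Your proposal is correct and follows essentially the same route as the paper: observe that $r_{i,j}(\zeta^\epsilon(V))=r_{i,j}(\zeta(V))$ for all $i,j$ because the sign twist only rescales by $\pm 1$, and then quote \cite[Lem.~A.3]{KR15} and its proof for both the equality and inequality versions. One tiny imprecision: in general the passage from $\zeta(V)$ to $\zeta^\epsilon(V)$ requires negating a suitable collection of \emph{both} rows and columns (not rows alone, since e.g.\ the identity blocks must stay fixed), but this does not affect your argument, as northwest ranks are invariant under any nonzero scalings of individual rows and columns.
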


\subsection{Identifying quiver orbits with $P$-orbits}\label{sect:quiverP}
Let $P \leq GL(2d)$ to be the parabolic subgroup of block lower-triangular matrices with blocks of sizes given by \eqref{eq:blocksizes} along the diagonal from northwest to southeast.
As the embedding \eqref{eq:sreptorep} is equivariant with respect to \eqref{eq:GdGLd}, we find that $\zeta^\epsilon$ is equivariant with respect to 
\begin{equation}\label{eq:GdtoGL}
\bar{\xi}\colon G(\bd) \hookrightarrow P \leq GL(2d)
\end{equation}
obtained from composing \eqref{eq:GLdtoP} with \eqref{eq:GdGLd} and noting that the target lies in $P$.
The following lemma strengthens this to say that distinct $G(\bd)$-orbits in $\srep_Q(\bd)$ are sent to distinct $P$-orbits in
$\Mat_\epsilon^\circ(2d,2d)$.

\begin{lemma}\label{lem:PtoGdorbits}
Let $V, W \in \srep_Q(\bd)$ such that $\zeta^\epsilon(V)$ and $\zeta^\epsilon(W)$ lie in the same $P$-orbit.  Then $V,W$ lie in the same $G(\bd)$-orbit of $\srep_Q(\bd)$.
\end{lemma}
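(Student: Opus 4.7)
The proof should be a direct concatenation of the rank-function characterizations we have already established, so the plan is to chain them together.

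First I would verify that $\zeta^\epsilon(V)$ and $\zeta^\epsilon(W)$ actually lie in $\Mat_\epsilon^\circ(2d,2d)$, so that Proposition \ref{prop:POrbitNWRanks} applies. By Lemma \ref{lem:symmNW} and the definition of $\zeta^\epsilon$ in \eqref{eq:zetaKdef}, both matrices lie in $Y_d^\epsilon\subset \Mat_\epsilon(2d,2d)$. The $2d\times 2d$ block form with an invertible antidiagonal block $\epsilon I_d$ and a zero block in the bottom right makes the determinant nonzero (a block column swap yields a block upper triangular matrix with determinant $\pm \epsilon^d$), so these matrices are invertible, hence in $\Mat_\epsilon^\circ(2d,2d)$.

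Next, apply Proposition \ref{prop:POrbitNWRanks}(i) to the parabolic $P$ corresponding to the block sizes \eqref{eq:blocksizes}. The hypothesis that $\zeta^\epsilon(V)$ and $\zeta^\epsilon(W)$ lie in the same $P$-orbit is then equivalent to the northwest rank equalities
\[
r_{i,j}(\zeta^\epsilon(V)) = r_{i,j}(\zeta^\epsilon(W)) \qquad \text{for all } i,j\in \mathcal{C}.
\]

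Now translate these northwest rank conditions back to quiver rank conditions via Lemma \ref{lem:NWandquiverranks}, which yields $r_J(V) = r_J(W)$ for every interval $J\subseteq Q$. Finally, Proposition \ref{prop:quiverranksorbits} concludes that $V$ and $W$ lie in the same $G(\bd)$-orbit of $\srep_Q(\bd)$.

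There is no real obstacle here — the work has already been done in assembling the rank function dictionaries on both sides (quiver intervals versus NW submatrix positions in $\mathcal{C}\times\mathcal{C}$), and in showing that $P$-orbits on $\Mat_\epsilon^\circ(2d,2d)$ are cut out by the coarser rank set indexed by $\mathcal{C}$. The only mild care needed is the invertibility check above, so that the $P$-orbit criterion of Proposition \ref{prop:POrbitNWRanks} genuinely applies to the image of $\zeta^\epsilon$.
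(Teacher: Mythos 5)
Your proof is correct and follows exactly the paper's argument: Proposition \ref{prop:POrbitNWRanks}(i) gives the northwest rank equalities on $\mathcal{C}\times\mathcal{C}$, Lemma \ref{lem:NWandquiverranks} translates these into $r_J(V)=r_J(W)$ for all intervals $J$, and Proposition \ref{prop:quiverranksorbits} concludes. The additional invertibility check is a harmless (and correct) verification that the paper leaves implicit, since $\zeta^\epsilon$ lands in $Y_d^\epsilon\subset\Mat_\epsilon^\circ(2d,2d)$.
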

\begin{proof}
By Proposition \ref{prop:POrbitNWRanks}, we have $r_{i,j}(\zeta^\epsilon(V))=r_{i,j}(\zeta^\epsilon(W))$ for all $(i,j) \in \mathcal{C}$.
Then from Lemma \ref{lem:NWandquiverranks} we get that $r_J(V)=r_J(W)$ for all  intervals $J$ in $Q$, which implies that $V,W$ are in the same $G(\bd)$-orbit by Proposition \ref{prop:quiverranksorbits}.
\end{proof}

Recall that when we denote an orbit by $\mathscr{O}^\circ$, we denote it closure by $\mathscr{O}$.

\begin{proposition}\label{prop:POrbitimage}
Let $O^\circ \subset 
\Mat_\epsilon^\circ(2d,2d)$ be a $P$-orbit such that $O^\circ \cap \im \zeta^\epsilon \neq \emptyset$.
Then $O^\circ \cap 
Y_d^\epsilon= \zeta^\epsilon(\mathscr{O}^\circ)$ for some orbit $\mathscr{O}^\circ \subset \srep_Q(\bd)$, where the intersection is taken as varieties.
Furthermore, we have $\zeta^\epsilon(W) \in O \cap 
Y_d^\epsilon$ for each $W \in \mathscr{O}$.
\end{proposition}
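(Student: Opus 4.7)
Fix $V \in \srep_Q(\bd)$ with $\zeta^\epsilon(V) \in O^\circ$ and set $\mathscr{O}^\circ := G(\bd) \cdot V$. The inclusion $\zeta^\epsilon(\mathscr{O}^\circ) \subseteq O^\circ \cap Y_d^\epsilon$ is immediate from the $G(\bd)$-equivariance of $\zeta^\epsilon$ along $\bar\xi\colon G(\bd) \hookrightarrow P$ in \eqref{eq:GdtoGL}: if $W = g \cdot V$ then $\zeta^\epsilon(W) = \bar\xi(g) \cdot \zeta^\epsilon(V) \in O^\circ$, and $\zeta^\epsilon(W) \in Y_d^\epsilon$ by construction of the symmetric Zelevinsky map. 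Together with the continuity of the morphism $\zeta^\epsilon$ this also yields the ``Furthermore'' statement: $\zeta^\epsilon(\mathscr{O}) \subseteq \overline{\zeta^\epsilon(\mathscr{O}^\circ)} \subseteq O$, with membership in $Y_d^\epsilon$ automatic from \eqref{eq:zetaKdef}.

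For the reverse inclusion $O^\circ \cap Y_d^\epsilon \subseteq \zeta^\epsilon(\mathscr{O}^\circ)$, the plan is to show that each $y = \bigl[\begin{smallmatrix}\Sigma & I_d\\ \epsilon I_d & 0\end{smallmatrix}\bigr] \in O^\circ \cap Y_d^\epsilon$ lies in $\im\zeta^\epsilon$, and then to apply Lemma \ref{lem:PtoGdorbits}. With respect to the $n\times n$ block structure on $\Sigma$ (indexed by $0 \leq a, b \leq n-1$), the non-zero blocks of $V_Q^\epsilon$ sit exactly on the anti-diagonal $a+b = n-1$ (holding the $V_{\alpha_i}$) and the super-anti-diagonal $a+b = n$ (holding the $V_{\beta_i}$); every other block vanishes, and the goal is to establish the same sparsity for $\Sigma$, using the rank equalities $r_{i,j}(y) = r_{i,j}(\zeta^\epsilon(V))$ for $(i,j) \in \mathcal{C} \times \mathcal{C}$ provided by Proposition \ref{prop:POrbitNWRanks}. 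Write $p_k := \bd(y_0) + \cdots + \bd(y_{k-1})$, so that $\mathcal{C} \cap [1, d] = \{p_1, \dots, p_n = d\}$ and, by symmetry of $\bd$, $\mathcal{C} \cap [d+1, 2d] = \{d + p_1, \dots, d + p_n = 2d\}$. First, at $(i, j) = (p_k, p_l)$ with $k + l \leq n$, the northwest $p_k \times p_l$ submatrix of $V_Q^\epsilon$ meets no arrow block and so has rank zero; consequently the same submatrix of $\Sigma$ vanishes identically, and varying $(k,l)$ forces $\Sigma_{a,b} = 0$ whenever $a+b \leq n-2$. Second, at $(i, j) = (d + p_k, d + p_l)$, a row-and-column reduction in which the two identity blocks of $y$ clear the first $p_l$ rows and first $p_k$ columns of $\Sigma$ (via column operations using the top-right $I_d$ and row operations using the bottom-left $\epsilon I_d$, respectively) yields
\[
r_{d+p_k,\,d+p_l}(y) = p_k + p_l + \rank\,\Sigma_{\{p_l+1,\dots,d\}\times\{p_k+1,\dots,d\}},
\]
with the analogous formula for $\zeta^\epsilon(V)$. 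For $k + l \geq n + 1$ the corresponding southeast submatrix of $V_Q^\epsilon$ is identically zero, forcing the same submatrix of $\Sigma$ to vanish; varying $(k, l)$ forces $\Sigma_{a,b} = 0$ for all $a + b \geq n + 1$.

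With the sparsity of $\Sigma$ in place, the $\epsilon$-symmetry of $\Sigma$ induces, block by block, exactly the symmetric-representation relations of \S\ref{sec:symreps} on the surviving anti-diagonal and super-anti-diagonal blocks, so the retraction $\pi^\epsilon$ extracting these blocks (undoing the $\epsilon$-rescaling from \eqref{eq:zetaKdef} where appropriate) produces a well-defined $W := \pi^\epsilon(y) \in \srep_Q(\bd)$ with $\zeta^\epsilon(W) = y$. Since $\zeta^\epsilon(W) \in O^\circ = P \cdot \zeta^\epsilon(V)$, Lemma \ref{lem:PtoGdorbits} forces $W \in G(\bd) \cdot V = \mathscr{O}^\circ$, and therefore $y \in \zeta^\epsilon(\mathscr{O}^\circ)$, as required. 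The main technical point to execute with care is the second rank-reduction step: one must verify that the two identity blocks of $y \in Y_d^\epsilon$ act on disjoint rows and columns of the northwest $d \times d$ part, so that the southeast submatrix of $\Sigma$ appears as an independent direct summand in the rank of the enlarged submatrix and the displayed formula is exact.
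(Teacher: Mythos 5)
Your proposal is correct, and its skeleton coincides with the paper's: fix $V$ with $\zeta^\epsilon(V)\in O^\circ$, get $\zeta^\epsilon(\mathscr{O}^\circ)\subseteq O^\circ\cap Y_d^\epsilon$ from equivariance along $\bar\xi$, show every point of $O^\circ\cap Y_d^\epsilon$ lies in $\im\zeta^\epsilon$, and then pass from equal $P$-orbits to equal $G(\bd)$-orbits (you via Lemma \ref{lem:PtoGdorbits}, the paper by unwinding Lemma \ref{lem:NWandquiverranks} and Proposition \ref{prop:quiverranksorbits} -- the same thing). The one genuine difference is in the middle step: the paper simply cites \cite[Lem.~A.2]{KR15} for the fact that the corner rank conditions cut out the image of the Zelevinsky map inside the slice, whereas you reprove this from scratch by computing the block ranks $r_{p_k,p_l}$ and $r_{d+p_k,d+p_l}$ and deducing the staircase sparsity $\Sigma_{a,b}=0$ for $a+b\notin\{n-1,n\}$; your rank-reduction formula $r_{d+p_k,\,d+p_l}(y)=p_k+p_l+\rank\Sigma_{\{p_l+1,\dots,d\}\times\{p_k+1,\dots,d\}}$ is exact (the two identity blocks have pivots in disjoint rows and columns of the enlarged submatrix, so the southeast block of $\Sigma$ survives untouched), and the coverage argument over $(k,l)$ with $k+l\le n$ and $k+l\ge n+1$ does force all off-staircase blocks to vanish. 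Your route also makes explicit a point the paper absorbs into the citation, namely that $\epsilon$-symmetry of $\Sigma$ together with the staircase support reproduces exactly the relations of \S\ref{sec:symreps} (the fixed-arrow block sits on the main block diagonal, hence is $\epsilon$-symmetric, and the lower staircase blocks are forced to be $\epsilon$ times transposes of their mirrors), so the extracted $W$ is a genuine point of $\srep_Q(\bd)$ with $\zeta^\epsilon(W)=y$. What your approach buys is self-containedness in the symmetric setting; what the paper's buys is brevity, at the cost of leaning on the ordinary type $A$ result. Your treatment of the ``furthermore'' clause via continuity is also fine (closedness of $\zeta^\epsilon$ is not needed for that containment).
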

\begin{proof}
Since $O^\circ \cap \im \zeta^\epsilon \neq \emptyset$, there exists $V \in \srep_Q(\bd)$ such that $\zeta^\epsilon(V) \in O^\circ$.
Let $\mathscr{O}^\circ=G(\bd)\cdot V$.
Since $\zeta^\epsilon$ is equivariant with respect to the embedding $G(\bd)\hookrightarrow P$, and $\im \zeta^\epsilon \subseteq
Y_d^\epsilon$, we get $\zeta^\epsilon(\mathscr{O}^\circ) \subseteq O^\circ \cap 
Y_d^\epsilon
$.
For the reverse containment, let $x \in O^\circ \cap 
Y_d^\epsilon$. Since $x$ and $\zeta^\epsilon(V)$ lie in the same $P$-orbit,
we have $r_{i,j}(x)=r_{i,j}(\zeta^\epsilon(V))$ for all $i,j \in \mathcal{C}$ by Proposition \ref{prop:POrbitNWRanks}(i).
In particular, $x$ satisfies the particular subset of rank conditions which define the image of $\zeta^\epsilon$, as in \cite[Lem.~A.2]{KR15}.
So $x=\zeta^\epsilon(V')$ for some $V' \in \srep_Q(\bd)$.
Then Lemma \ref{lem:NWandquiverranks} gives $r_J(V)=r_J(V')$ for all intervals $J \subseteq Q$, so Proposition \ref{prop:quiverranksorbits} implies that 
$V'\in \mathscr{O}^\circ$ as well.  Thus, $x =\zeta^\epsilon(V') \in \zeta^\epsilon(\mathscr{O}^\circ)$, completing the proof.
The final statement passing to closures follows from the fact that $\zeta^\epsilon$ is a closed embedding.
\end{proof}

For $V \in \srep_Q(\bd)$, the following subvariety of $\srep_Q(\bd)$ naturally arises when considering rank conditions.  Subvarieties of representation spaces defined like this are not generally irreducible, though it will turn out that they are in this special case of symmetric type $A$ representations.
\begin{equation}\label{eq:XV}
    \mathscr{X}_V := \{ W \in \srep_Q(\bd) \mid r_J(W) \leq r_J(V) \text{ for all }J\}
\end{equation}

\begin{lemma}\label{lem:sliceisorankvariety}
Let $V \in \srep_Q(\bd)$ and $O=\ol{P\cdot \zeta^\epsilon(V)}$.  For $W \in \srep_Q(\bd)$, we have that $W \in \mathscr{X}_V$ if and only if $\zeta^\epsilon(W) \in O \cap 
Y_d^\epsilon$.
\end{lemma}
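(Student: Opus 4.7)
The plan is to reduce the statement to a chain of rank-condition equivalences, using Proposition~\ref{prop:POrbitNWRanks} on the symmetric-variety side and Lemma~\ref{lem:NWandquiverranks} on the quiver side.

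First I would note that by construction $\zeta^\epsilon(W) \in Y_d^\epsilon$ for every $W \in \srep_Q(\bd)$, so the condition $\zeta^\epsilon(W) \in O \cap Y_d^\epsilon$ simplifies to $\zeta^\epsilon(W) \in O$. Since $O = \ol{P \cdot \zeta^\epsilon(V)}$ is $P$-stable, this in turn is equivalent to $P \cdot \zeta^\epsilon(W) \subseteq \ol{P \cdot \zeta^\epsilon(V)}$. Before invoking Proposition~\ref{prop:POrbitNWRanks}, I would briefly verify that both $\zeta^\epsilon(V)$ and $\zeta^\epsilon(W)$ actually lie in $\Mat_\epsilon^\circ(2d,2d)$: skew-/symmetry is Lemma~\ref{lem:symmNW}, and a direct block computation gives $\det \zeta^\epsilon(V) = \pm\epsilon^d \neq 0$, so the invertibility hypothesis of Proposition~\ref{prop:POrbitNWRanks} is satisfied.

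Next, applying Proposition~\ref{prop:POrbitNWRanks}(ii) (with respect to the block sizes \eqref{eq:blocksizes} that define $P$, and hence the index set $\mathcal{C}$), the containment $P \cdot \zeta^\epsilon(W) \subseteq \ol{P \cdot \zeta^\epsilon(V)}$ is equivalent to
\begin{equation}
r_{i,j}(\zeta^\epsilon(W)) \leq r_{i,j}(\zeta^\epsilon(V)) \qquad \text{for all } i,j \in \mathcal{C}.
\end{equation}
By Lemma~\ref{lem:NWandquiverranks} (the $\leq$ version), this last system of inequalities is equivalent to $r_J(W) \leq r_J(V)$ for every interval $J \subseteq Q$, which is precisely the condition defining $W \in \mathscr{X}_V$ in \eqref{eq:XV}. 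Chaining these equivalences together gives the lemma.

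There is no real obstacle here; the work has already been done in Proposition~\ref{prop:POrbitNWRanks} and Lemma~\ref{lem:NWandquiverranks}, and this lemma is essentially the dictionary between the two. The only point where one needs to be careful is confirming that $\zeta^\epsilon$ lands in $\Mat_\epsilon^\circ(2d,2d)$ (so that Proposition~\ref{prop:POrbitNWRanks} applies) and that passing from ``$\zeta^\epsilon(W) \in O$'' to ``$P \cdot \zeta^\epsilon(W) \subseteq O$'' is legitimate; both are immediate once pointed out.
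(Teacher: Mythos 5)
Your proposal is correct and follows essentially the same route as the paper's proof: both reduce the statement to the equivalence of the rank conditions $r_{i,j}(\zeta^\epsilon(W))\leq r_{i,j}(\zeta^\epsilon(V))$ for $i,j\in\mathcal{C}$ (via Proposition~\ref{prop:POrbitNWRanks}) with $r_J(W)\leq r_J(V)$ for all intervals $J$ (via Lemma~\ref{lem:NWandquiverranks}), and the extra checks you include (invertibility of $\zeta^\epsilon(W)$ and $P$-stability of $O$) are correct but routine.
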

\begin{proof}
We have $W \in \mathscr{X}_V$ if and only if $r_J(W) \leq r_J(V)$ by definition, and the latter is equivalent to $r_{i,j}(\zeta^\epsilon(W))\leq r_{i,j}(\zeta^\epsilon(V))$ for all $(i,j)\in \mathcal{C}$ by Lemma \ref{lem:NWandquiverranks}.
Now applying Proposition \ref{prop:POrbitNWRanks}, noting that $\im \zeta^\epsilon \subseteq 
Y_d^\epsilon$, shows these are equivalent to $\zeta^\epsilon(W) \in O \cap 
Y_d^\epsilon$.
\end{proof}

\begin{lemma}\label{lem:orbitcapSirred}
Let $O \subset 
\Mat_\epsilon^\circ(2d,2d)$ be a $P$-orbit closure such that $O^\circ \cap \im \zeta^\epsilon \neq \emptyset$. Then $O \cap 
Y_d^\epsilon$ is a reduced and irreducible scheme.
\end{lemma}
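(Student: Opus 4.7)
The plan is to prove reducedness via the Mars-Springer slice machinery and irreducibility by identifying $O\cap Y_d^\epsilon$ precisely with $\zeta^\epsilon(\mathscr{O})$ through a dimension count.

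I would begin by verifying the hypotheses of Lemma \ref{lem:MSSIsomorphism}. Since $P\supseteq B_-$, the $P$-orbit closure $O$ is also a $B_-$-orbit closure (namely the closure of its unique dense $B_-$-orbit). Moreover, the zero representation lies in every $G(\bd)$-orbit closure in $\srep_Q(\bd)$ since such closures are cones through the origin, so $\iota_\epsilon(\overline{x}) = \zeta^\epsilon(0)\in\zeta^\epsilon(\mathscr{O})\subseteq O$ by Proposition \ref{prop:POrbitimage}, whence $\overline{x}\in O$. Lemma \ref{lem:MSSIsomorphism} then gives reducedness of $O\cap Y_d^\epsilon$ together with a smooth action map $B_-\times(O\cap Y_d^\epsilon)\to O$ making $O\cap Y_d^\epsilon$ an attractive slice to $B_-\cdot\overline{x}$ in $O$.

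For irreducibility I would aim to prove the set-theoretic equality $O\cap Y_d^\epsilon = \zeta^\epsilon(\mathscr{O})$; the right-hand side is irreducible as the image of the irreducible orbit closure $\mathscr{O}$ under the closed embedding $\zeta^\epsilon$. The containment $\zeta^\epsilon(\mathscr{O}) \subseteq O\cap Y_d^\epsilon$ is Proposition \ref{prop:POrbitimage}, and since $\zeta^\epsilon(\mathscr{O}^\circ) = O^\circ\cap Y_d^\epsilon$ is open and dense in $\zeta^\epsilon(\mathscr{O})$, the reverse inclusion reduces to showing that no irreducible component of $O\cap Y_d^\epsilon$ lies in $(O\setminus O^\circ)\cap Y_d^\epsilon$.

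The key observation for ruling this out is that the attractive one-parameter subgroup $\lambda$ of Lemma \ref{lem:MSLemma} stabilizes $Y_d^\epsilon$ and extends to an $\mathbb{A}^1$-action contracting every point of $Y_d^\epsilon$ to $\overline{x}$. Hence for any proper $P$-orbit closure $O'\subsetneq O$ with $O'\cap Y_d^\epsilon\neq\emptyset$, one has $\overline{x}\in O'$ by closedness of $O'$, and Lemma \ref{lem:MSLemma} yields $\dim(O'\cap Y_d^\epsilon) = \dim O' - \dim(B_-\cdot\overline{x}) < \dim O - \dim(B_-\cdot\overline{x}) = \dim(O\cap Y_d^\epsilon)$. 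Combined with equidimensionality of $O\cap Y_d^\epsilon$, no irreducible component of $O\cap Y_d^\epsilon$ can lie in the finite union $(O\setminus O^\circ)\cap Y_d^\epsilon = \bigcup_i(O_i\cap Y_d^\epsilon)$ over the $P$-orbit closures $O_i\subsetneq O$, completing the argument. The main obstacle I anticipate is establishing the equidimensionality of $O\cap Y_d^\epsilon$: while the attractive slice structure produces the expected dimension at $\overline{x}$, propagating this to every irreducible component requires a careful local analysis of the smooth action map at $\overline{x}$, using the $\mathbb{G}_m$-contraction (which ensures every component contains $\overline{x}$) to reduce the dimension calculation to that point.
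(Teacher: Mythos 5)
Your reducedness argument is the paper's own (Lemma \ref{lem:MSSIsomorphism}), and your auxiliary observations are fine: $\iota_\epsilon(\overline{x})=\zeta^\epsilon(0)$ does hold, every component of $O\cap Y_d^\epsilon$ does contain this point by the contracting $\mathbb{G}_m$, and the dimension inequality $\dim(O'\cap Y_d^\epsilon)<\dim(O\cap Y_d^\epsilon)$ for proper $P$-orbit closures $O'\subsetneq O$ follows from Lemma \ref{lem:MSLemma} as you say. The genuine gap is the equidimensionality of $O\cap Y_d^\epsilon$, which your whole irreducibility argument rests on and which you leave as an anticipated ``obstacle.'' The repair you sketch cannot work: since every irreducible component passes through $\overline{x}$, the local dimension $\dim_{\overline{x}}(O\cap Y_d^\epsilon)$ is just the \emph{maximum} of the component dimensions, so no amount of local analysis of the smooth action map at $\overline{x}$, or of the contraction to $\overline{x}$, can rule out a lower-dimensional component sitting inside the boundary $(O\setminus O^\circ)\cap Y_d^\epsilon$. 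This is exactly the delicate point: as the remark following Lemma \ref{lem:orbitcapSirred} notes, Mars--Springer varieties for the orthogonal pair can genuinely be reducible, so the formal slice properties you invoke cannot by themselves carry the proof; some input special to the quiver situation is required, and your proposal does not supply it at the place where it is needed.

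The paper takes a different route entirely: by Lemma \ref{lem:sliceisorankvariety} the statement is equivalent to irreducibility of the rank variety $\mathscr{X}_V\subseteq\srep_Q(\bd)$, and one shows $\mathscr{X}_V=\overline{G(\bd)\cdot V}$ by combining the classical fact that rank inequalities characterize degeneration for ordinary type $A$ representations with \cite[Thm.~7.1]{BC21}, which transfers ordinary degeneration to symmetric degeneration (only the case $\epsilon=+1$ actually needs this; for $\epsilon=-1$ symplectic Mars--Springer varieties are automatically irreducible). If you want to stay within your architecture, the step to fix is not equidimensionality but the claim that every component meets $O^\circ$: smoothness of $B_-\times(O\cap Y_d^\epsilon)\to O$ implies flatness, flat maps lift generalizations, so the generic point of every irreducible component of the source maps to the generic point of the integral scheme $O$; hence each component of $O\cap Y_d^\epsilon$ meets the $B_-$-stable open orbit $O^\circ$, and your containment $C\subseteq\overline{O^\circ\cap Y_d^\epsilon}=\zeta^\epsilon(\mathscr{O})$ (using Proposition \ref{prop:POrbitimage}) then finishes the argument without any dimension count. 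As written, however, the proposal is incomplete at its crucial step.
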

\begin{proof}
By Lemma \ref{lem:MSSIsomorphism}  we know that $O \cap 
Y_d^\epsilon$ is a reduced scheme.
Let $V \in \srep_Q(\bd)$ be such that $x:=\zeta^\epsilon(V) \in O^\circ$.
By Lemma \ref{lem:sliceisorankvariety}, it is equivalent to show $\mathscr{X}_V$ is an irreducible subscheme of $\srep_Q(\bd)$.

We claim $\mathscr{X}_V= \ol{G(\bd)\cdot V}$, and is thus reduced and irreducible. Since each $r_J$ is semicontinuous, we get $\mathscr{X}_V\supseteq \ol{G(\bd)\cdot V}$, as is always the case for subvarieties of representation varieties defined like this.
To show the reverse containment, since $\mathscr{X}_V$ is a $G(\bd)$-equivariant set, it is enough to show that every $G(\bd)$-orbit in $\mathscr{X}_V$ is also contained in $\ol{G(\bd)\cdot V}$.
So let $W \in \srep_Q(\bd)$ satisfy $G(\bd)\cdot W \subseteq \mathscr{X}_V$.
Then $r_J(W) \leq r_J(V)$ for all intervals $J$ by definition of $\mathscr{X}_V$.
Now considering $V,W$ as representations of the underlying type $A$ quiver without symmetric data via \eqref{eq:sreptorep},
it is well-established (for example in \cite{AdF85}) that this implies
$GL(\bd)\cdot W \subseteq \ol{GL(\bd)\cdot V}$.

To work our way back to the symmetric setting, 
the first equivalence of \cite[Thm.~7.1]{BC21} shows that in our type $A$ quiver setting,
$GL(\bd)\cdot W \subseteq \ol{GL(\bd)\cdot V}$
is equivalent to $G(\bd)\cdot W \subseteq \ol{G(\bd)\cdot V}$.
So varying over all $W$ such that $G(\bd)\cdot W \subseteq \mathscr{X}_V$ proves the claim and overall lemma.
\end{proof}

\begin{remark}
In the general setting of Section \ref{sec:symvarieties},
it turns out that Mars-Springer slices are always irreducible when $K$ is symplectic,
but can be reducible when $K$ is orthogonal (see \cite[\S2B.~Remark]{WWY}).
Thus our proof that detours through a citation to difficult work on degenerations of symmetric quiver representations 
is only needed to cover the case where $\epsilon=1$.
When $\epsilon=-1$, Lemma \ref{lem:orbitcapSirred} is immediate from the fact that every Mars-Springer slice for a symmetric pair $(GL(2d), Sp(2d))$ is irreducible.

Since our proof shows that the particular Mars-Springer slices in Lemma \ref{lem:orbitcapSirred} are always irreducible, there must be something special about these.  We pose it as a problem to the reader to find a direct proof of irreducibility for these particular Mars-Springer slices, in order to make the methods of this paper more self-contained.
\end{remark}

\section{From arbitrary orientation to bipartite orientation}\label{sec:orientation}
In this section we give a brief review of homogeneous fiber bundles, then use this to show that any symmetric representation variety for type $A$ quiver of arbitrary orientation is related to one for the bipartite orientation by a homogeneous fiber bundle construction.

\subsection{Homogeneous fiber bundles}\label{sect:bundles}
Let $G$ be an algebraic group, $H\leq G$ a closed and connected algebraic subgroup, and $X$ a quasiprojective $H$-variety.
Write $G *_H X$ for the quotient of $G \times X$ by the free left action of $H$ given by $h\cdot (g,x) = (gh^{-1}, h\cdot x )$. 
This quotient, called an \emph{induced space} or \emph{homogeneous fiber bundle}, is a $G$-variety for the action $g\cdot (g'*x) = gg'*x$.
One may consult \cite[\S5.2.18]{CGbook} or \cite[\S14.6]{FSR17} as general references, for example.
The following lemma gives a criterion for a homogeneous fiber bundle to decompose as a product of varieties.

\begin{lemma}\label{lem:KXHG}\cite[Lemma 14.6.9]{FSR17}
Let $G$ be an affine algebraic group and $H\leq G$ a closed subgroup. Let $X$ be a $G$-variety (where the $G$-action extends the $H$-action) and endow $G/H \times X$ with the diagonal left $G$-action. Then there exists a $G$-equivariant isomorphism $\Gamma: G *_H X \rightarrow  G/H \times X$. 
\end{lemma}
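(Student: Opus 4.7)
The plan is to construct $\Gamma$ explicitly by first writing down a ``twist'' automorphism of $G\times X$ that turns the diagonal $G$-action on $G/H\times X$ into the standard translation action on the first factor, and then descending to $H$-quotients. Concretely, I would define
\begin{equation*}
\phi : G\times X \to G\times X, \qquad \phi(g,x) = (g,\, g\cdot x),
\end{equation*}
which is an isomorphism of varieties with inverse $(g,y)\mapsto(g,\,g^{-1}\cdot y)$ (this uses only that the action map $G\times X\to X$ is a morphism).

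Next, I would verify that $\phi$ is simultaneously equivariant with respect to two different pairs of actions. On the source $G\times X$, let $H$ act on the left by $h\cdot(g,x) = (gh^{-1}, h\cdot x)$, which is the action whose geometric quotient is $G *_H X$, and let $G$ act by left multiplication on the first factor only. On the target copy of $G\times X$, let $H$ act by $h\cdot(g,y) = (gh^{-1}, y)$ (trivially on $X$) and let $G$ act diagonally, $g'\cdot(g,y)=(g'g,\,g'\cdot y)$. Both equivariance checks are direct calculations; for instance, $\phi(h\cdot(g,x)) = (gh^{-1},\, gh^{-1}\cdot hx) = (gh^{-1},\, g\cdot x) = h\cdot \phi(g,x)$, and similarly for the $G$-action.

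Since $H$ is a closed subgroup of the affine algebraic group $G$, the projection $G\to G/H$ is a principal $H$-bundle (étale-locally trivial), so the geometric quotients of both $G\times X$ and $\phi(G\times X)$ by the specified $H$-actions exist as varieties and equal $G *_H X$ and $G/H \times X$ respectively. The isomorphism $\phi$ therefore descends to a $G$-equivariant isomorphism
\begin{equation*}
\Gamma : G *_H X \longrightarrow G/H \times X, \qquad g *_H x \longmapsto (gH,\, g\cdot x).
\end{equation*}

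The only subtle point is organizational rather than substantive: one must keep track of two different $H$-actions on $G\times X$ (one having $G *_H X$ as quotient, the other having the naive $G/H\times X$ as quotient) and check that the single map $\phi$ intertwines both of them while also being $G$-equivariant for the correct left actions. Once $\phi$ is written down, verifying that $\Gamma$ is well-defined and invertible reduces to this twisting computation; there is no serious obstacle beyond bookkeeping.
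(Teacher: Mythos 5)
Your untwisting argument is correct: the map $\phi(g,x)=(g,\,g\cdot x)$ does intertwine the two $H$-actions and the two $G$-actions you specify, and it descends to the stated isomorphism $\Gamma(g*_H x)=(gH,\,g\cdot x)$, whose inverse $(gH,y)\mapsto g*_H(g^{-1}\cdot y)$ is easily checked to be well defined. Note that the paper itself gives no proof of this lemma, citing it directly from \cite[Lemma 14.6.9]{FSR17}, so there is nothing to compare beyond the reference; your argument is the standard one. One small point worth stating cleanly: rather than asserting separately that both quotients exist, it is tidiest to observe that the quotient of the target copy of $G\times X$ by the action $h\cdot(g,y)=(gh^{-1},y)$ is $G/H\times X$ (since $G\to G/H$ is a geometric quotient and the $X$-factor is untouched), and then transport this across the $H$-equivariant isomorphism $\phi$ to conclude that the quotient of the source exists and equals $G*_H X$; this way the argument needs no quasi-projectivity hypothesis on $X$, precisely because the $H$-action has been untwisted to a product.
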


The $X$, $H$, $G$ in this paper always satisfy the hypotheses of \cite[Thm.~14.6.5]{FSR17}, so that $G\times X \to G *_H X$ is a geometric quotient, and the map $X \to G *_H X$ sending $x \mapsto 1*x$ is a closed immersion.  In this way, we identify $X \subseteq G *_H X$ when taking intersections of $X$ with subvarieties of $G *_H X$.
The following standard results connect the problems we study for an $H$-variety $X$ to those for the $G$ variety $G *_H X$ (see \cite[\S2.3]{KR21} for more detail).

\begin{proposition}\label{prop:fiberbundle}
In the general setup of this section, all of the following hold.
\begin{enumerate}[(i)]
\item The following association is a bijection.  
\begin{equation}\label{eq:GHvarieties}
\begin{split}
\left\{\begin{tabular}{c} $G$-stable subvarieties\\ of $G *_H X$\end{tabular} \right\}
& \to
\left\{\begin{tabular}{c}  $H$-stable subvarieties\\ of $X$ \end{tabular} \right\}
\\
Y \qquad \qquad &\mapsto \qquad \qquad Y \cap X\\
G\cdot Z \qquad \qquad &\mapsfrom \qquad \qquad Z
\end{split}
\end{equation}
In particular, it restricts to a bijection on orbits and induces an isomorphism of orbit closure posets.
\item Let $Z \subseteq X$ be an $H$-stable subvariety. Any smooth equivalence class of singularity that occurs in $Z$ also occurs in $G\cdot Z$. 
\item Restriction along $X \hookrightarrow G *_H X$ induces an isomorphism of equivariant Grothendieck groups $K_G(G *_H X) \simeq K_H(X)$.
\end{enumerate}
\end{proposition}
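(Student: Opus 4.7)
The plan is to build all three statements from two fundamental geometric facts: the quotient map $\pi\colon G \times X \to G *_H X$ is an $H$-torsor (geometric quotient), and the projection $G *_H X \to G/H$ realizes $G *_H X$ as a Zariski-locally trivial fiber bundle with fiber $X$. The inclusion $X \hookrightarrow G *_H X$ sending $x \mapsto 1*x$ is then a closed $H$-equivariant immersion identifying $X$ with a distinguished fiber. All three items will be deduced from these two facts together with standard descent arguments, which is essentially the strategy used in \cite{KR21}.

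For (i), I would verify that $Y \mapsto Y \cap X$ and $Z \mapsto G \cdot Z$ are mutually inverse. The points to establish are that $G \cdot Z$ agrees with the image of $G \times Z$ under $\pi$ (so it is closed and $G$-stable), that $(G \cdot Z) \cap X = Z$, and that every $G$-orbit in $G *_H X$ meets $X$, which yields $G \cdot (Y \cap X) = Y$ for any $G$-stable $Y$. Both operations preserve closures and irreducibility, so they induce the claimed bijection. The orbit correspondence and poset isomorphism are then formal consequences.

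For (ii), I would exploit local triviality of $G *_H X \to G/H$: étale-locally on $G/H$ there is an isomorphism $U \times X \xrightarrow{\sim} \pi^{-1}(U)$ which carries $U \times Z$ to $G \cdot Z \cap \pi^{-1}(U)$. The projection $U \times Z \to Z$ is then a smooth morphism with smooth source, and the fiber over a chosen base point of $U$ recovers the inclusion $Z \hookrightarrow G \cdot Z$. This exhibits $Z$ and $G \cdot Z$ as smoothly equivalent at each point of $Z$, giving (ii) by the definition of smooth equivalence of singularities.

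For (iii), I would invoke the standard equivariant induction equivalence. Faithfully flat descent along the $H$-torsor $G \times X \to G *_H X$ identifies $G$-equivariant coherent sheaves on $G *_H X$ with $(G \times H)$-equivariant coherent sheaves on $G \times X$, which by pullback along $x \mapsto (1,x)$ are further identified with $H$-equivariant coherent sheaves on $X$. Passing to Grothendieck groups yields the isomorphism $K_G(G *_H X) \cong K_H(X)$. The step requiring the most care, and the one I would quote from the literature (e.g.\ \cite{CGbook}) rather than reprove, is that under this equivalence the inverse map on $K_0$ is indeed given by sheaf-theoretic restriction along the closed immersion $X \hookrightarrow G *_H X$; this amounts to a base change identification along the Cartesian square defining the distinguished fiber, combined with the projection formula.
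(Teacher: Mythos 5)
Your proposal is correct, and it is essentially the argument the paper relies on: the paper gives no proof of this proposition, labelling it a collection of standard results and pointing to \cite[\S2.3]{KR21} and the references \cite{CGbook,FSR17}, and your sketch is exactly the standard argument behind those citations (the $H$-torsor $G\times X\to G*_H X$, the distinguished fiber $X\hookrightarrow G*_H X$, mutually inverse maps $Y\mapsto Y\cap X$ and $Z\mapsto G\cdot Z$, and induction/descent for equivariant sheaves identifying $K_G(G*_H X)\simeq K_H(X)$ with inverse given by restriction). One small caveat: your preamble asserts Zariski-local triviality of $G*_H X\to G/H$, which is not automatic here since $H=G(\bd)$ can involve orthogonal groups (not special); but this does not matter, because in (ii) you only use \'etale-local triviality, and one can even avoid trivializations altogether by noting that $G\times Z\to G\cdot Z$ and $G\times Z\to Z$ are both smooth, which already gives the smooth equivalence of singularities.
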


\subsection{Reduction the bipartite setting} \label{sec:bipartitereduction}
Proposition \ref{prop:bipartitereduction} is the main result of this section, which will reduce the proof of our main theorem to the case that $Q$ is bipartite, using general properties of homogeneous fiber bundles.

\begin{proposition}\label{prop:bipartitereduction}
Let $Q$ be a symmetric type $A$ quiver with dimension vector $\bd$.
Then there exists a symmetric bipartite type $A$ quiver  $\tilde{Q}$ with dimension vector $\tilde{\bd}$, along with an open subvariety $\mathscr{U} \subseteq \srep_{\tilde{Q}}(\tilde{\bd})$
admitting a $G(\tilde{\bd})$-equivariant isomorphism
\begin{equation}\label{eq:Ubundle}
    \mathscr{U} \simeq G(\tilde{\bd}) *_{G(\bd)} \srep_Q(\bd).
\end{equation}
\end{proposition}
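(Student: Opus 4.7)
The plan is to proceed by induction on the number of \emph{intermediate} vertices of $Q$, i.e., those that are neither sources nor sinks. The base case of zero intermediate vertices is trivial: take $\tilde Q = Q$, $\tilde{\bd} = \bd$, and $\mathscr{U} = \srep_Q(\bd)$.

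For the inductive step, choose an intermediate vertex $j$ together with some outgoing arrow $b\colon j \to k$. Construct $Q'$ by inserting a new vertex $j'$ with $\bd'(j') := \bd(j)$ and replacing $b$ by two new arrows $c\colon j' \to j$ and $b'\colon j' \to k$. To preserve the involution, we perform the symmetric modification at $\tau(j), \tau(b)$ simultaneously; the signs of any $\tau$-fixed inserted arrow are forced by those of $b$, so $Q'$ is again a symmetric type $A$ quiver with the same sign $\epsilon$, and strictly fewer intermediate vertices than $Q$.

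Let $\mathscr{U}' \subseteq \srep_{Q'}(\bd')$ be the open subvariety on which the linear map $V_c$ (and hence its $\tau$-image) is invertible. Define $\phi\colon \srep_Q(\bd) \hookrightarrow \mathscr{U}'$ by setting the inserted maps to identity matrices and otherwise preserving the representation. Then $\phi$ is a closed embedding, equivariant with respect to the diagonal embedding $G(\bd) \hookrightarrow G(\bd')$ which sends the factor at $j$ diagonally into the factors at $j$ and $j'$ in $G(\bd')$; moreover, $G(\bd') \cdot \phi(\srep_Q(\bd)) = \mathscr{U}'$ and the stabilizer of each $\phi(V)$ is precisely this image of $G(\bd)$ (since normalizing $V_c = I$ forces $g_j = g_{j'}$). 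By standard properties of induced spaces (\S\ref{sect:bundles}) this yields a $G(\bd')$-equivariant isomorphism $\mathscr{U}' \simeq G(\bd') *_{G(\bd)} \srep_Q(\bd)$. Applying the induction hypothesis to $(Q', \bd')$ gives a symmetric bipartite $\tilde Q$, a dimension vector $\tilde{\bd}$, and an open $\mathscr{U}'' \subseteq \srep_{\tilde Q}(\tilde{\bd})$ with $\mathscr{U}'' \simeq G(\tilde{\bd}) *_{G(\bd')} \srep_{Q'}(\bd')$. Taking $\mathscr{U}$ to be the preimage of $\mathscr{U}'$ in $\mathscr{U}''$ under the correspondence of Proposition \ref{prop:fiberbundle}(i), associativity of the induced-space construction gives
\[
\mathscr{U} \simeq G(\tilde{\bd}) *_{G(\bd')} \mathscr{U}' \simeq G(\tilde{\bd}) *_{G(\bd)} \srep_Q(\bd),
\]
completing the induction.

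The main obstacle is carrying out the local modification $\tau$-equivariantly near the $\tau$-fixed locus of $Q$ (a central vertex or a central arrow), while ensuring that the sign $\epsilon$ attached to the unique $\tau$-fixed vertex or arrow transfers consistently. This is resolved by choosing $j$ initially as far from the center as possible and working outward, so the modifications never disturb the structure at the fixed locus; the remaining claims are formal consequences of the general theory of homogeneous fiber bundles recalled in \S\ref{sect:bundles}.
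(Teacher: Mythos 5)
There is a genuine gap, and it sits exactly at the point you wave away. Your reduction move---pick an intermediate vertex $j$ and an outgoing arrow $b$, insert a new source $j'$, and perform the mirror move at $\tau(j)$, $\tau(b)$---is fine, and essentially reproduces the paper's argument, as long as $j\neq\tau(j)$ (this is the case where $Q$ has an even number of vertices and $G(\bd)=GL(\bd)$). But when $Q$ has an odd number of vertices, the $\tau$-fixed central vertex $\star$ is \emph{always} intermediate: since $\tau$ reverses orientation, the two arrows at $\star$ are exchanged by $\tau$, so $\star$ has exactly one incoming and one outgoing arrow. Your moves never cure this. Applying your move at $j=\star$ together with its mirror removes both arrows at $\star$ but creates $c\colon j'\to\star$ and $\tau(c)\colon\star\to\tau(j')$, so $\star$ is again intermediate, and the number of intermediate vertices does not drop; moreover each ($\tau$-equivariant) move adds two vertices, so the parity of $|Q_0|$ is preserved and a $\tau$-fixed, hence intermediate, vertex persists forever. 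Thus the induction cannot terminate in a bipartite symmetric quiver when $|Q_0|$ is odd, and the plan of ``working outward so the modifications never disturb the fixed locus'' is not available: the fixed locus must be disturbed.

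The only way to become bipartite is the paper's move \eqref{eq:splitex} applied at the fixed vertex: split $\star$ into two vertices $\star^t,\star^h$ exchanged by $\tilde{\tau}$ and joined by a $\tilde{\tau}$-fixed arrow $\vec{\star}$ with $\tilde{s}(\vec{\star})=s(\star)$ (note the sign comes from the fixed vertex, not from an arrow $b$). This is where the real content of the proposition lies and where your normalization argument breaks: the matrix over $\vec{\star}$ lies in $\Mat_\epsilon(\bd(\star),\bd(\star))$, its invertible locus is $GL(\bd(\star))/K_\star$ rather than a $GL$-torsor, and normalizing it to the standard form $\Omega_\star$ only forces the acting element at $\star^h$ into the orthogonal or symplectic group $K_\star$ of \eqref{eq:Kz}, not an identification $g_{\star^t}=g_{\star^h}$ of two $GL$-factors. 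Consequently the subgroup $H\leq G(\tilde{\bd})$ isomorphic to $G(\bd)$ contains $K_\star$, the ``fiber direction'' $G(\tilde{\bd})/H$ acquires a symmetric-space factor, and one must check that the $K_\star$-action on $\srep_Q(\bd)$ extends to $GL(\bd(\star))$ before invoking Lemma \ref{lem:KXHG}, as in the paper's odd-vertex case. (Your appeal to point stabilizers in the non-fixed case is also looser than what the induced-space formalism requires, but that is repairable; the missing treatment of the fixed vertex is not.)
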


There are two possibilities for $Q$ which we will need to consider separately in the proof: when $n$ is odd, $\tau$ fixes the middle vertex, and when $n$ is even, $\tau$ fixes the middle arrow.
In each case, we use the formalism of arrow contraction from \cite[\S2.5]{KR21}, which is greatly simplified in the type $A$ quiver setting, but needs to be extended to symmetric quivers.

We fix the following conventions: let $Q$ be a bipartite type $A$ quiver.  Define a new type $A$ quiver $\tilde{Q}$ by taking each length 2 path in $Q$ and inserting an arrow in the middle so that the resulting 3 arrows do not form a path.  Locally, splitting a vertex labeled $\star$ looks like this:
\begin{equation}\label{eq:splitex}
\vcenter{\hbox{\begin{tikzpicture}[point/.style={shape=circle,fill=black,scale=.5pt,outer sep=3pt},>=latex]
   \node[outer sep=-2pt] (y) at (0,0) {$x$};
  \node[outer sep=-2pt] (x) at (2,0) {$\star$};
  \node[outer sep=-2pt] (ty) at (4,0) {$y$};
  \path[->]
	(y) edge node[above] {$a$} (x)
	(x) edge node[above] {$b$} (ty);
   \end{tikzpicture}}}
\qquad \rightsquigarrow \qquad
    \vcenter{\hbox{\begin{tikzpicture}[point/.style={shape=circle,fill=black,scale=.5pt,outer sep=3pt},>=latex]
   \node[outer sep=-2pt] (x1) at (0,1) {$x$};
   \node[outer sep=-2pt] (y1) at (1,0) {$\red{\star^h}$};
   \node[outer sep=-2pt] (x2) at (2,1) {$\red{\star^t}$};
   \node[outer sep=-2pt] (y2) at (3,0) {$y$};
   \path[->]
	(x1) edge node[left] {$a$} (y1)
	(x2) edge node[right,pos=.3] {$b$} (y2);
    \draw[->,red,dashed] (x2) edge node[left,pos=.2] {$\red{\vec{\star}}$} (y1);
   \end{tikzpicture}}}
\end{equation}
Arrows of $Q$ retain their labels in $\tilde{Q}$, while a new arrow added from splitting a vertex $\star$ is denoted $\vec{\star}$, going from $\star^t$ to $\star^h$.
Letting $E\subset \tilde{Q}_1$ be the set of newly added arrows, we say that $Q$ is the \emph{contraction of $\tilde{Q}$ along $E$}.
The associated contraction map on vertices is denoted $\nu\colon \tilde{Q}_0 \to Q_0$.
A dimension vector $\bd$ for $Q$ determines a dimension vector $\tilde{\bd}$ for $\tilde{Q}$ by $\tilde{\bd}(z)=\bd(\nu(z))$.

When $Q^\epsilon$ is symmetric type $A$, it is straightforward to see that $\tilde{Q}$ has an even number of vertices, thus the (unique) involution $\tilde{\tau}$ for $\tilde{Q}$ fixes the middle arrow of $\tilde{Q}$.
To finish giving data that makes $\tilde{Q}$ a symmetric quiver, it is enough to specify that the value of $\tilde{s}$ on this arrow is also $\epsilon$.
We also note that we always have $G(\tilde{\bd})=GL(\tilde{\bd})$ since $\tilde{Q}$ has no $\tilde{\tau}$-fixed vertices.

\begin{proof}[Proof of Proposition \ref{prop:bipartitereduction}]
We first outline the general proof, then afterwards fill in the details which differ in the two cases.
In each case we define a closed embedding of algebraic groups \begin{equation}
\nu^*\colon G(\bd) \xrightarrow{\sim} H \leq G(\tilde{\bd})
\end{equation}
along with a closed embedding $\psi\colon\srep_Q(\bd) \hookrightarrow \srep_{\tilde{Q}}(\tilde{\bd})$
which is equivariant with respect to $\nu^*$.
Secondly, we show that the action of $G(\bd)$ on $\srep_Q(\bd)$ has a natural extension to an action of $G(\tilde{\bd})$ 
on $\srep_Q(\bd)$.  By Lemma \ref{lem:KXHG} this gives a $G(\tilde{\bd})$-equivariant isomorphism
\begin{equation}\label{eq:contractiso1}
 (G(\tilde{\bd})/H) \times \srep_Q(\bd)  
 \simeq G(\tilde{\bd}) *_{G(\bd)} \srep_Q(\bd)
\end{equation}
where $G(\tilde{\bd})$ acts diagonally on the left hand side and by left multiplication on the right hand side.
Finally, we will let $\mathscr{U} \subseteq \srep_{\tilde{Q}}(\tilde{\bd})$ be the open subvariety where the matrix over each contracted arrow is invertible,
and show that $\mathscr{U}$ is $GL(\tilde{\bd})$-equivariantly isomorphic to the left hand side of \eqref{eq:contractiso1}.
We now fill in the details of each case, with a diagram illustrating a small example in each.

\smallskip

\emph{Details when $Q$ has an even number of vertices.}
Since $G(\bd)=GL(\bd)$ in this case, the argument is very similar to \cite[\S2.5]{KR21}. A small example to orient the reader is in \eqref{eq:evenbipartite}.
\begin{equation}\label{eq:evenbipartite}
\vcenter{\hbox{\begin{tikzpicture}[point/.style={shape=circle,fill=black,scale=.5pt,outer sep=3pt},>=latex]
   \node (label) at (2.5,-1) {$Q$};
   \node[outer sep=-2pt] (y) at (0,0) {$y$};
  \node[outer sep=-2pt] (x) at (2,0) {$x$};
  \node[outer sep=-2pt] (tx) at (4,0) {$\tau(x)$};
  \node[outer sep=-2pt] (ty) at (6,0) {$\tau(y)$};
  \path[->]
	(y) edge node[above] {$a$} (x)
	(x) edge node[above] {$b=\tau(b)$} (tx)
	(tx) edge node[above] {$\tau(a)$} (ty);
\end{tikzpicture}}}
\qquad \rightsquigarrow \qquad
\vcenter{\hbox{\begin{tikzpicture}[point/.style={shape=circle,fill=black,scale=.5pt,outer sep=3pt},>=latex]
   \node (label) at (1.5,-1) {$\tilde{Q}$};
   \node[outer sep=-2pt] (z1) at (0,1) {$y$};
  \node[outer sep=-2pt] (z2) at (1,0) {$x^h$};
   \node[outer sep=-2pt] (z3) at (2,1) {$x^t$};
  \node[outer sep=-2pt] (tz3) at (3,0) {$\tilde{\tau}(x^t)$};
   \node[outer sep=-2pt] (tz2) at (4,1) {$\tilde{\tau}(x^h)$};
  \node[outer sep=-2pt] (tz1) at (5,0) {$\tilde{\tau}(y)$};
  \path[->]
	(z1) edge node[left] {$a$} (z2)
	(z3) edge node[left,pos=.2] {$\vec{x}$} (z2)
  	(z3) edge node[left,pos=.5] {$\substack{b=\\ \tilde{\tau}(b)}$} (tz3) 
	(tz2) edge node[left,pos=.2] {$\tilde{\tau}(\vec{x})$} (tz3)
	(tz2) edge node[right,pos=.3] {$\tau(a)$} (tz1);
   \end{tikzpicture}}}
\end{equation}

In this case we have the following decompositions of the representation variety and base change group:
\begin{equation}\label{eq:nodddecomps2}
    \srep_Q(\bd) = \Mat_{\epsilon}(\bd(hc), \bd(tc)) \times \prod_{a \in Q_1^+} \Mat(\bd(ha), \bd(ta)),
    \qquad 
    G(\bd) = \prod_{z \in Q_0^+} GL(\bd(z)),
\end{equation}
where $\epsilon$ is determined by the value of $s$ on the unique $\tau$-fixed arrow of $Q$.
Consider the subgroup 
\begin{equation}
    H:=\{(g_z) \in G(\tilde{\bd})\; \mid \; g_{ta}=g_{ha}\ 
    \text{if }a \in \tilde{Q}_1^+ \text{ is contracted}\}.
\end{equation}

Let $\nu^*\colon G(\bd) \xrightarrow{\sim} H$ be given by mapping $(g_w)_{w \in Q^+_0}$ to $(g_{\nu(z)})_{z \in \tilde{Q}^+_0}$.
Consider the closed embedding $\psi\colon\srep_Q(\bd) \hookrightarrow \srep_{\tilde{Q}}(\tilde{\bd})$
obtained by identifying $(V_a)_{a \in Q_1^+}$ with $(W_a)_{a \in \tilde{Q}_1^+}$ where $W_a = V_a$ if $a$ is not contracted, and $W_a$ is an appropriately sized identity matrix if $a$ is contracted.
This is equivariant with respect to $\nu^*$.

Extending the action of $G(\bd)$ on $\srep_Q(\bd)$ to an action of of $G(\tilde{\bd})$ in this case works exactly as in \cite[Lem.~2.17]{KR21}:
define a group homomorphism $\phi\colon G(\tilde{\bd}) \to G(\bd)$
by simply projecting away from the factors indexed by vertices of the form $z^h$ where $z\in Q_0$ has a length 2 path through it. Take the action of $G(\tilde{\bd})$ on $\srep_Q(\bd)$ obtained by extending along this group homomorphism.
Now a $GL(\tilde{\bd})$-equivariant isomorphism between $\mathscr{U}$ and the left hand side of \eqref{eq:contractiso1}
in this case is obtained exactly as in \cite[Prop.~2.18]{KR21}.

\smallskip

\emph{Details when $Q$ has an odd number of vertices.}
Let $\star \in Q_0$ be the middle vertex, the unique vertex fixed by $\tau$.  Exchanging the positive and negative parts if necessary, we can assume without loss of generality that the unique $c \in Q_1^+$ adjacent to $\star$ is oriented with $hc=\star$.
A small example to orient the reader is in \eqref{eq:splitex}, where $a$ in that example is $c$ here, $b=\tau(c)$, and so $\vec{\star}$ is fixed by $\tilde{\tau}$.
Let $\tilde{s}(\vec{\star}) = s(\star)$.

In this case we have the following decompositions of the representation variety and base change group:
\begin{equation}\label{eq:nodddecomps}
    \srep_Q(\bd) = \prod_{a \in Q_1^+} \Mat(\bd(ha), \bd(ta)),
    \qquad 
    G(\bd) = K_\star \times \prod_{z \in Q_0^+} GL(\bd(z))
\end{equation}
where $K:=K_\star$ is defined in \eqref{eq:Kz} with respect to a matrix $\Omega_\star$ which is symmetric if $s(\star)=+1$, or skew-symmetric if $s(\star)=-1$
(so $K\simeq O(\bd(\star))$ if $s(\star)=+1$ and $K\simeq Sp(\bd(\star))$ if $s(\star)=-1$).
Consider the subgroup 
\begin{equation}
    H:=\{(g_z) \in G(\tilde{\bd})\; \mid \; g_{ta}=g_{ha}\ 
    \text{if }a \in \tilde{Q}_1^+\setminus \{\vec{\star}\} \text{ is contracted},
    \text{ and }g_{\star^h} \in K \leq GL(\bd(\star^h))\}.
\end{equation}

Again $\nu^*\colon G(\bd) \xrightarrow{\sim} H$ is given by mapping $(g_w)_{w \in Q^+_0}$ to $(g_{\nu(z)})_{z \in \tilde{Q}^+_0}$.
Consider the closed embedding $\psi\colon\srep_Q(\bd) \hookrightarrow \srep_{\tilde{Q}}(\tilde{\bd})$
obtained by identifying $(V_a)_{a \in Q_1^+}$ with $(W_a)_{a \in \tilde{Q}_1^+}$ where $W_a = V_a$ if $a$ is not contracted, and $W_a$ is an appropriately sized identity matrix if $a$ is contracted.
This is equivariant with respect to $\nu^*$.

To extend the action of $G(\bd)$ on $\srep_Q(\bd)$ to an action of of $G(\tilde{\bd})$, we start as in the case of an even number of vertices by projecting from $G(\tilde{\bd})$ to $G' := GL(\bd(\star)) \times \prod_{z \in Q_0^+} GL(\bd(z))$, that is, away from factors indexed by vertices of the form $z^h$. 
With this, it is enough to extend the action of $G(\bd)$ to $G'$ then restrict along this homomorphism.  

To extend from $G(\bd)$ to $G'$, notice that the only difference is the factor indexed by $\star$, which is $K$ in $G(\bd)$ and $GL(\bd(\star))$ in $G'$.  But considering the definition of the action of $G(\bd)$ on $\srep_Q(\bd)$, we see that the factor $K$ is simply acting by left multiplication on the matrix space $\Mat(\bd(\star), \bd(tc))$,
and thus canonically extends to an action of $GL(\bd(\star))$ by left multiplication.
As in \eqref{eqn:MPembedding}, the open subvariety of invertible matrices in $\Mat_\epsilon(\bd(\star),\bd(\star))$ is isomorphic to $GL(\bd(\star))/K$.
Then showing that $\mathscr{U}$ is $GL(\tilde{\bd})$-equivariantly isomorphic to the left hand side of \eqref{eq:contractiso1}
can be carried out by modifying the proof of \cite[Prop.~2.18]{KR21} using this identification.
\end{proof}

We note the following corollary for reference later, which can be immediately extracted from the preceding proof.

\begin{corollary}\label{cor:quotsmoothaffine}
The quotient $G(\tilde{\bd})/G(\bd)$ is smooth and affine.
\end{corollary}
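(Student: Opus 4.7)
The plan is to extract smoothness and affineness directly from the isomorphisms already constructed in the proof of Proposition \ref{prop:bipartitereduction}. Combining the $G(\tilde{\bd})$-equivariant isomorphism $\mathscr{U} \simeq G(\tilde{\bd}) *_{G(\bd)} \srep_Q(\bd)$ with Lemma \ref{lem:KXHG} (after identifying $G(\bd)$ with $H$ via $\nu^*$), one obtains a $G(\tilde{\bd})$-equivariant isomorphism
\begin{equation}
\mathscr{U} \simeq \bigl(G(\tilde{\bd})/G(\bd)\bigr) \times \srep_Q(\bd).
\end{equation}
So I simply need to analyze the two sides of this isomorphism.

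First, I claim $\mathscr{U}$ is smooth and affine. By construction, $\mathscr{U}$ is the open subvariety of $\srep_{\tilde{Q}}(\tilde{\bd})$ on which the matrix assigned to each contracted arrow is invertible. Since $\srep_{\tilde{Q}}(\tilde{\bd})$ is affine space and invertibility is cut out by nonvanishing of the determinants of the relevant square matrix blocks, $\mathscr{U}$ is the principal open subset defined by the nonvanishing of the product of these determinants. Principal open subsets of affine varieties are affine, and open subvarieties of smooth varieties are smooth; hence $\mathscr{U}$ is smooth and affine. On the other hand, $\srep_Q(\bd)$ is a product of (possibly symmetric or skew-symmetric) matrix spaces by \eqref{eq:srep}, so is a nonempty affine space, in particular smooth and affine.

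Finally, I conclude by a general principle: if $X \times Y$ is smooth and affine and $Y$ is a nonempty smooth affine variety, then $X$ is smooth and affine. For affineness, pick any $v \in Y$; the slice $X \times \{v\}$ is closed in the affine variety $X \times Y$, hence affine, and is isomorphic to $X$. Smoothness follows from the tangent-space decomposition $T_{(x,v)}(X \times Y) \cong T_x X \oplus T_v Y$. Applying this with $X = G(\tilde{\bd})/G(\bd)$ and $Y = \srep_Q(\bd)$ gives the corollary. There is no real obstacle: the content is just reading off the consequence of the equivariant identification already established. (As a sanity check, the conclusion is also consistent with Matsushima's criterion, since $G(\bd)$ is a product of general linear, orthogonal, and symplectic groups, hence reductive inside the reductive group $G(\tilde{\bd})$.)
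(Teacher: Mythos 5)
Your argument is correct, and it reaches the conclusion by a slightly different route than the one the paper intends. The paper's ``immediate extraction'' from the proof of Proposition \ref{prop:bipartitereduction} is the direct identification of the quotient itself: in the even case $G(\tilde{\bd})/G(\bd)$ is visibly a product of groups $GL(\bd(z))$ over the split vertices, and in the odd case there is one additional factor $GL(\bd(\star))/K_\star$, which via \eqref{eqn:MPembedding} is the principal open subset $\Mat_\epsilon^\circ(\bd(\star),\bd(\star))$ of an affine space; each factor is manifestly smooth and affine. You instead use the product decomposition $\mathscr{U}\simeq \bigl(G(\tilde{\bd})/G(\bd)\bigr)\times \srep_Q(\bd)$ (which indeed is available, since the proof of the proposition establishes that the $G(\bd)$-action extends so that Lemma \ref{lem:KXHG} applies, giving \eqref{eq:contractiso1} and then the identification with $\mathscr{U}$), observe that $\mathscr{U}$ is a principal open subset of an affine space, and then cancel the nonempty smooth affine factor $\srep_Q(\bd)$ by the closed-slice and tangent-space arguments. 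Both steps of your cancellation principle are sound over the algebraically closed base field (a closed subvariety of an affine variety is affine, and $\dim T_{(x,v)}(X\times Y)=\dim_x X+\dim_v Y$ forces $\dim T_xX=\dim_x X$ when $Y$ is smooth), and $G(\tilde{\bd})/G(\bd)$ is a genuine quasi-projective variety so the principle applies. What your approach buys is that you never need to compute the quotient explicitly; what the paper's approach buys is an explicit description of $G(\tilde{\bd})/G(\bd)$ as a concrete product of general linear groups and, in the odd case, a space of invertible (skew-)symmetric matrices, which is also what makes the smoothness and affineness assertions transparent at a glance.
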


\section{Proof of the main theorem}\label{sec:mainThmProof}
In this section we prove Theorem \ref{thm:mainTheorem}, and fix the following notation throughout.
Given a type $A$ symmetric quiver $Q^\epsilon$ with symmetric dimension vector $\bd$, let $(\tilde{Q^\epsilon}, \tilde{\bd})$ be the associated symmetric bipartite type $A$ quiver and dimension vector as in Section \ref{sec:bipartitereduction}.  Recall that $\tilde{Q}$ has an even number of vertices and middle arrow $\vec{\star}$ fixed by $\tilde{\tau}$.
Denote the total dimension of $\tilde{\bd}$ by $\tilde{d}$, which is even, and let $G=GL(\tilde{d})$ and $K=G^\theta$ as defined in \eqref{eq:theta}, using the specific form $\Omega_{\epsilon}$ defined in \eqref{eq:omegapm}.
Let $P \leq G(\tilde{\bd})$ be the parabolic subgroup of block lower-triangular matrices
with block sizes $\bd(y_0), \bd(y_1),\dots, \bd(y_{n-1}), \bd(x_n),\dots, \bd(x_1)$ down the diagonal.
Recall from \eqref{eq:zetaKdef} the symmetric Zelevinsky map
\[
\zeta^\epsilon\colon \srep_{\tilde{Q}}(\tilde{\bd})\rightarrow Y^\epsilon_d.
\]
The maximal tori $T(\tilde{\bd}) \leq G(\tilde{\bd})$ and $T(\bd) \leq G(\bd)$ consist of matrices which are diagonal in each factor.

We prove one technical lemma first which is needed to understand the maps on equivariant $K$-groups.

\begin{lemma}\label{lem:CMthing}
Let $Y \hookrightarrow X$ be a regular embedding of schemes, and $Z \subset X$ a Cohen-Macaulay closed subscheme such that 
the codimension of $Z$ in $X$ is equal to the codimension of $Z \cap Y$ in $Y$.
Then $\Tor_i^{\mathcal{O}_X}(\mathcal{O}_Y, \mathcal{O}_Z)=0$ for all $i \geq 1$.
\end{lemma}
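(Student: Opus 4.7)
The plan is to reduce to a purely local question and then apply the standard Koszul/Cohen--Macaulay machinery. Since vanishing of $\Tor$ can be checked at stalks, I would fix a point $x \in X$ and work in $A := \mathcal{O}_{X,x}$. If $x \notin Y \cap Z$ then one of $\mathcal{O}_{Y,x}, \mathcal{O}_{Z,x}$ is zero and there is nothing to prove, so from now on assume $x \in Y \cap Z$.

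Next, I would exploit the regular embedding hypothesis: the ideal $I_Y \subset A$ is generated by a regular sequence $f_1, \ldots, f_r$ of length $r := \mathrm{codim}_X Y$, so the Koszul complex $K_\bullet(f_1, \ldots, f_r; A)$ is a free resolution of $A/I_Y = \mathcal{O}_{Y,x}$. Computing $\Tor$ from this resolution gives
\begin{equation*}
\Tor_i^A(\mathcal{O}_{Y,x}, \mathcal{O}_{Z,x}) \;\cong\; H_i\bigl(K_\bullet(f_1, \ldots, f_r; \mathcal{O}_{Z,x})\bigr),
\end{equation*}
and by the standard Koszul vanishing criterion over a Noetherian local ring, this homology vanishes in positive degrees precisely when $f_1, \ldots, f_r$ is a regular sequence on the finitely generated $A$-module $\mathcal{O}_{Z,x}$.

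Establishing this regularity is the crux of the argument. Here I would invoke that $\mathcal{O}_{Z,x}$ is a Cohen--Macaulay $A$-module (which follows from $Z$ being Cohen--Macaulay), together with the classical fact that, for a Cohen--Macaulay module, a sequence in the maximal ideal is regular if and only if quotienting by it drops the Krull dimension by the length of the sequence. Since $\mathcal{O}_{Z,x}/(f_1, \ldots, f_r)\mathcal{O}_{Z,x} = \mathcal{O}_{Z \cap Y, x}$, the task reduces to verifying
\begin{equation*}
\dim \mathcal{O}_{Z \cap Y, x} \;=\; \dim \mathcal{O}_{Z,x} - r.
\end{equation*}

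The main obstacle I anticipate is the final translation from the global codimension hypothesis to this local dimension equality. The hypothesis rewrites as $\mathrm{codim}_X(Z \cap Y) = \mathrm{codim}_X Y + \mathrm{codim}_X Z$, i.e.\ $Y$ and $Z$ meet properly in $X$. In the catenary, equidimensional setting in which the lemma is applied (schemes of finite type over $\kk$), this forces the local codimension of $Z \cap Y$ in $Z$ to be exactly $r$ at every point of the intersection, which is equivalent to the required dimension equality. Once this bookkeeping is carried out, the remainder of the proof is routine commutative algebra.
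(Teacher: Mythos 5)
Your proposal is correct and follows essentially the same route as the paper: localize, use the Cohen--Macaulay criterion that a length-$r$ sequence dropping dimension by $r$ is regular on $\mathcal{O}_{Z,x}$ (the paper cites \cite[Thm.~2.1.2(c)]{BH93}), and then conclude the $\Tor$ vanishing from regularity of the sequence on the module (the paper cites \cite[Exer.~1.1.12(b)]{BH93}, which is exactly your explicit Koszul-resolution computation). The only difference is presentational, and your remark that the passage from the global codimension hypothesis to the local dimension equality uses the catenary, (locally) equidimensional setting is if anything slightly more careful than the paper, which performs that translation without comment.
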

\begin{proof}
We reduce to the local case where $X=\Spec(S)$, $Y=\Spec(S/J)$, and $Z=\Spec(S/I)$ for some ideals $I, J \subset (S,\mathfrak{m})$. The hypotheses on codimension mean that 
\begin{equation}
\dim S - \dim S/I = \dim S/J - \dim S/(I+J).
\end{equation}
The hypothesis that $Y \hookrightarrow X$ is a regular embedding means that $J$ is generated by a regular sequence $(s_1, ..., s_r)$, and so $\dim S/J=\dim S - r$.
Plugging in above and simplifying yields:
\begin{equation}
\dim S/(I+J) = \dim S/I - r.
\end{equation}
Now \cite[Thm.~2.1.2(c)]{BH93} with $M=S/I$ gives that $(s_1, ..., s_r)$ is still a regular sequence on $S/I$. 
Finally, \cite[Exer.~1.1.12(b)]{BH93} with $M=S/I$ and ${\bf x} = (s_1, ..., s_r)$ implies that $\Tor^S_i(S/I, S/J)=0$ for all $i \geq 1$. 
\end{proof}

\begin{proof}[Proof of Theorem \ref{thm:mainTheorem}]
(i)  Let $\mathscr{O}^{\ddagger}:=\overline{P\cdot \zeta^\epsilon(1*\mathscr{O})}$ in $\text{Mat}^\circ_\epsilon(2d,2d)$ and recall the isomorphism $j_\epsilon$ from \eqref{eqn: IsoMatrixSpaceSymmSpace}. Then the map of \eqref{eq:reptoKG} is defined by $\mathscr{O}^\dagger:=j_\epsilon(\mathscr{O}^{\ddagger})$. Since $j_\epsilon$ is a $G$-equivariant isomorphism, and thus a $P$-equivariant isomorphism, it suffices to prove that the map which sends $\mathscr{O}$ to $\mathscr{O}^\ddagger$ is an injective, order preserving map of partially ordered sets:
 \begin{equation}\label{eq:reptoMat}
\begin{split}
\left\{\begin{tabular}{c} $G(\bd)$-orbit closures\\ in $\srep_{Q}^\epsilon(\bd)$\end{tabular} \right\}
& \rightarrow
\left\{\begin{tabular}{c} $P$-orbit closures\\ in $\text{Mat}^\circ_\epsilon(2d,2d)$ \end{tabular} \right\}.
\end{split}
\end{equation}

We begin by noting that $\mathscr{O}^\ddagger$ is indeed a $P$-orbit closure in $\text{Mat}^\circ_\epsilon(2d,2d)$ 
since it is an irreducible, $P$-stable, closed subvariety of $\text{Mat}^\circ_\epsilon(2d,2d)$,
which has finitely many $P$-orbits. 

To see that the map \eqref{eq:reptoMat} 
is injective, we first show that $\mathscr{O}^\circ \mapsto P\cdot \zeta^\epsilon(1*\mathscr{O}^\circ)$ is an injective map between the associated sets of orbits. Since each orbit closure is determined by its open orbit, this is enough. 
Choose points $V_i \in \mathscr{O}_i^\circ$ ($i=1,2$), and suppose that $\zeta^\epsilon(1*V_1)$ and $\zeta^\epsilon(1*V_2)$ are in the same $P$-orbit.
By Lemma \ref{lem:PtoGdorbits}, we have that $1*V_1$ and $1*V_2$ are in the same $G(\tilde{\bd})$-orbit in $\srep_{\tilde{Q}}(\tilde{\bd})$, which implies that $V_1$ and $V_2$ lie in the same $G(\bd)$-orbit by Proposition \ref{prop:bipartitereduction}. 

To justify that \eqref{eq:reptoMat} is order preserving, suppose that $\mathscr{O}_1\subseteq \mathscr{O}_2$ in $\srep_{Q}(\bd)$. Then $1*\mathscr{O}_1\subseteq 1*\mathscr{O}_2$, and hence $\overline{P\cdot \zeta^\epsilon(1*\mathscr{O}_1)}\subseteq \overline{P\cdot \zeta^\epsilon(1*\mathscr{O}_2)}.$

(ii) It suffices to show that there is a smooth affine variety $X$ and a smooth morphism  
\begin{equation}
\varphi: X \times \srep_{Q}^\epsilon(\bd)\rightarrow \srep_{Q}^\epsilon(\bd)^\ddagger\subset \text{Mat}^\circ_\epsilon(2d,2d)    
\end{equation}
which restricts to a smooth morphism 
\begin{equation}\label{eq:maintheoremmap}
        X \times \mathscr{O} \rightarrow \mathscr{O}^\ddagger.
\end{equation}

Let $Y_d^\epsilon\subseteq \Mat_\epsilon^\circ(2d,2d)$ be the matrix space from \eqref{eqn:MatrixSlice}. By Lemma \ref{lem:MSSIsomorphism} we get a smooth map
\begin{equation}\label{eqn:smoothmap}
    B_-\times (\mathscr{O}^\ddagger \cap Y_d^\epsilon)\rightarrow \mathscr{O}^\ddagger, \quad (b,y)\mapsto byb^t.
\end{equation}
Similar to \eqref{eq:piretract}, we have a retraction of $\zeta^\epsilon$ given by extracting certain submatrices in \eqref{eq:zetaKdef} and multiplying certain entries by $\epsilon$:
\begin{equation}\label{eq:piepsilon}
\pi^\epsilon\colon Y_d^\epsilon \to \srep_{\tilde{Q}}(\tilde{\bd}) \qquad \text{satisfying} \quad \pi^\epsilon\circ \zeta^\epsilon = 1_{\srep_{\tilde{Q}}(\tilde{\bd})}.
\end{equation}
This makes $Y_d^\epsilon$ the total space of a vector bundle on $\srep_{\tilde{Q}}(\tilde{\bd})$.
Denote the pullback to $Y_d^\epsilon$ of the open subvariety $\mathscr{U} \subseteq \srep_{\tilde{Q}}(\tilde{\bd})$ of Proposition \ref{prop:bipartitereduction} by $Y^\circ$, which is therefore open in $Y_d^\epsilon$.

Next we want to show that $\mathscr{O}^\ddagger \cap Y^\circ = \zeta^\epsilon(G(\tilde{\bd})\cdot\mathscr{O})$ where the intersection is scheme-theoretical. 
We will do this by showing they are both 
reduced and irreducible closed subschemes of $Y^\circ$ which have a dense subset of $\kk$-points in common (here we use that $\kk$ is algebraically closed).

First observe that $\mathscr{O}^\ddagger \cap Y^\circ$ is a reduced and irreducible scheme by Lemma \ref{lem:orbitcapSirred}, since it is open in $\mathscr{O}^\ddagger \cap Y_d^\epsilon$.
Furthermore, since $\mathscr{O}^\ddagger$ is a closed subscheme of $\Mat_\epsilon^\circ(2d,2d)$, the intersection $\mathscr{O}^\ddagger \cap Y^\circ$ is a closed subscheme of $Y^\circ$.
Now $\zeta^\epsilon(G(\tilde{\bd})\cdot\mathscr{O})$ is a reduced and irreducible closed subscheme of $Y^\circ$,
since $\zeta^\epsilon$ is a closed embedding with $\zeta^\epsilon(\mathscr{U}) \subseteq Y^\circ$,
and $G(\tilde{\bd})\cdot\mathscr{O}=G(\tilde{\bd})*_{G(\bd)}\mathscr{O}$ is a reduced and irreducible closed subscheme of $\mathscr{U}$.

By Proposition \ref{prop:POrbitimage},
we have equalities
$(\mathscr{O}^\ddagger)^\circ \cap Y^\circ =
(\mathscr{O}^\ddagger)^\circ \cap Y_d^\epsilon = \zeta^\epsilon(G(\tilde{\bd})\cdot \mathscr{O}^\circ)$ at the level of $\kk$-points and thus $\mathscr{O}^\ddagger \cap Y^\circ = \zeta^\epsilon(G(\tilde{\bd})\cdot\mathscr{O})$ holds as schemes.

Finally, the fact that $\zeta^\epsilon$ is a closed embedding gives the middle isomorphism below, while Proposition \ref{prop:bipartitereduction} together with Lemma \ref{lem:KXHG} gives the last isomorphism of
\begin{equation}\label{eq:mainlastisos}
\mathscr{O}^\ddagger \cap Y^\circ 
=\zeta^\epsilon(G(\tilde{\bd})\cdot\mathscr{O})
\simeq G(\tilde{\bd}) \cdot \mathscr{O} 
\simeq (G(\tilde{\bd})/G(\bd)) \times \mathscr{O}.
\end{equation}

Now the smooth map \eqref{eq:maintheoremmap} of the main theorem is obtained as the composition:
\begin{equation}
    B_-\times (G(\tilde{\bd})/G(\bd)) \times \mathscr{O}
    \simeq
     B_-\times (\mathscr{O}^\ddagger \cap Y^\circ) \hookrightarrow
     B_-\times (\mathscr{O}^\ddagger \cap Y_d^\epsilon)\rightarrow \mathscr{O}^\ddagger
\end{equation}
where the first isomorphism uses \eqref{eq:mainlastisos}, the second map is an open embedding (so smooth), and the final map is the smooth map \eqref{eqn:smoothmap}.
So, explicitly, $X=
 B_-\times (G(\tilde{\bd})/G(\bd))$ in the notation of \eqref{eq:maintheoremmap}.
Note that the quotient $G(\tilde{\bd})/G(\bd)$ is smooth and affine by Corollary \ref{cor:quotsmoothaffine}.

(iii) 
Once again, we use the $T$-equivariant isomorphism $j_\epsilon$ to work in the space $\text{Mat}^\circ_\epsilon(2d,2d)$ instead of $G/K$. That is, we will show
that there is a homomorphism of equivariant Grothendieck groups
\begin{equation}\label{eq:homGrothGroupsMat}
{K}_T(\text{Mat}^\circ_\epsilon(2d,2d)) \to {K}_{T(\bd)}(\srep_{Q}^\epsilon(\bd))
\end{equation}
sending the class of the structure sheaf $[\mathcal{O}_{\mathscr{O}^\ddagger}]$ to $[\mathcal{O}_\mathscr{O}]$ whenever $\mathscr{O}^\ddagger$ is Cohen-Macaulay.

Let $T_Y=\bar{\xi}(T(\tilde{\bd})) \leq GL_{2d}$ where $\bar{\xi}$ is the embedding of \eqref{eq:GdtoGL}.
Restricting the group action we get a homomorphism 
${K}_T(\text{Mat}^\circ_\epsilon(2d,2d))\to {K}_{T_Y}(\text{Mat}^\circ_\epsilon(2d,2d))$.
Now the map 
\[
\Psi\colon
B_-\times Y^\epsilon_d \rightarrow \text{Mat}^\circ_\epsilon(2d,2d), \quad (b,y)\mapsto byb^t
\] 
is $T_Y$-equivariant 
with respect to $T_Y$ acting on $B_-\times Y^\epsilon_d$ by $h\cdot(b,y) = (hbh^{-1},hyh^t)$ and $T_Y$ acting on $\text{Mat}^\circ_\epsilon(2d,2d)$ by $h\cdot M = hMh^t$. 
Also $\Psi$ is smooth by Lemma \ref{lem:MSSIsomorphism}.
Pullback of sheaves along $\Psi$
induces
\begin{equation}
    {K}_{T_Y}(\text{Mat}^\circ_\epsilon(2d,2d)) \to {K}_{T_Y}(
    B_-\times Y^\epsilon_d) \quad \text{such that}\quad [\mathscr{F}]\mapsto [\Psi^*\mathscr{F}],
\end{equation}
the class of a sheaf is sent to the class of its pullback because smooth maps are flat and thus pullback of coherent sheaves along a smooth map is an exact functor \cite[5.2.5]{CGbook}.
In particular, we have $[\mathcal{O}_{\mathscr{O}^\ddagger}]\mapsto [\Psi^* \mathcal{O}_{\mathscr{O}^\ddagger}] =[\mathcal{O}_{
 B_-\times (\mathscr{O}^\ddagger \cap Y^\epsilon_d)}]$.
Since
 $B_-$ is smooth, we have a $T_Y$-equivariant regular embedding and induced restriction map \cite[\S5.2.5]{CGbook}:
\begin{equation}\label{eq:StoBS}
Y^\epsilon_d \hookrightarrow 
 B_-\times Y^\epsilon_d, \quad y\mapsto (1,y), \quad {K}_{T_Y}(
 B_-\times Y^\epsilon_d) \to {K}_{T_Y}(Y^\epsilon_d).
\end{equation}
When $\mathscr{O}^\ddagger$ is Cohen-Macaulay, $[\mathcal{O}_{
B_-\times (\mathscr{O}^\ddagger \cap Y^\epsilon_d)}]$ gets mapped to $[\mathcal{O}_{\mathscr{O}^\ddagger \cap Y^\epsilon_d}]$ since the higher Tor sheaves vanish by Lemma \ref{lem:CMthing}.
See \cite[Rmk.~5.2.7]{CGbook} and the discussion directly above it for more detail about this step.

Noting that $T_Y \xrightarrow{\sim} T(\tilde{\bd})$,
it is straightforward to see that $\pi^\epsilon$ in \eqref{eq:piepsilon} is equivariant with respect to this isomorphism.  
The restriction of $\pi^\epsilon$ to the $T_Y$-stable open subvariety $Y^\circ$ is thus an equivariant vector bundle over the open subvariety $\mathscr{U}$ of Proposition \ref{prop:bipartitereduction}, inducing maps
\begin{equation}
{K}_{T_Y}(Y_d^\epsilon) \twoheadrightarrow {K}_{T_Y}(Y^\circ) \xrightarrow{\sim} {K}_{T(\tilde{\bd})}(\mathscr{U})
\end{equation} 
sending the class of a sheaf to the class of its restriction along the equivariant section $\zeta^\epsilon|_{\mathscr{U}}$ (see \cite[Cor.~5.4.21]{CGbook} or \cite[Thm.~1.7]{Thomason88}).
In particular, the class at the end of the previous paragraph $[\mathcal{O}_{\mathscr{O}^\ddagger \cap Y_d^\epsilon}] \in {K}_{T_Y}(Y_d^\epsilon)$ is sent to $[\mathcal{O}_{G(\tilde{\bd})\cdot\mathscr{O}}] \in {K}_{T(\tilde{\bd})}(\mathscr{U})$. 

We obtain homomorphisms ${K}_{T(\tilde{\bd})}(\mathscr{U}) \to {K}_{T(\bd)}(\mathscr{U})$ by restricting the torus action via the identification $T(\bd) \leq T(\tilde{\bd})$, and ${K}_{T(\bd)}(\mathscr{U}) \to {K}_{T(\bd)}(\rep_Q(\bd))$ by pulling back classes along the $T(\bd)$-equivariant regular embedding $\rep_Q(\bd) \hookrightarrow \mathscr{U}$ (see Proposition \ref{prop:bipartitereduction}).
Composing these two maps with the sequence of maps from the above paragraphs completes the definition of the desired homomorphism \eqref{eq:homGrothGroupsMat}.  
The fact that the last two steps send $[\mathcal{O}_{GL(\tilde{\bd})\cdot\mathscr{O}}] \mapsto [\mathcal{O}_{\mathscr{O}}]$ 
when $\mathscr{O}^\ddagger$ is Cohen-Macaulay uses Lemma \ref{lem:CMthing} again,
where the hypothesis that  $GL(\tilde{\bd})\cdot\mathscr{O} \simeq \mathscr{O}^\ddagger \cap Y^\circ$ is Cohen-Macaulay is satisfied by Lemma \ref{lem:MSSIsomorphism}.

Part (iv) follows by tracing through the analogous maps on Chow groups in the same way as part (iii).  The difference is that the Cohen-Macaulay condition is not needed to keep track of the class at the points where this condition is invoked in the $K$-theory proof.
\end{proof}

\begin{remark}
The bipartite Zelevinsky map from \cite{KR15} (see Section \ref{sec:ordinaryZmap}) induces an isomorphism from each bipartite type $A$ quiver orbit closure to a Kazhdan-Lusztig variety in a type $A$ flag variety. As prime defining ideals of Kazhdan-Lusztig varieties are known (see \cite{WooYong08}), one easily obtains prime defining ideals of bipartite type $A$ quiver orbit closures.

In contrast, while we obtain a similar geometric isomorphism,
our methods do not yield generators for the prime defining ideals. It would be interesting to find such equations in the type $A$ symmetric quiver setting.
\end{remark}

\section{Converse theorems}\label{sec:converse}

In this section we prove some straightforward results which can loosely be viewed as converses to our main theorem, analogous to \cite[Thm.~2.6]{KR21} for type $D$ quivers and \cite[Rmk.~2.13]{KR21} for ordinary type $A$ quivers.
Since the proofs are quite similar to the type $D$ case and are not our main results, we refer to reader to \cite[\S2]{KR21} for more background and detail.
We retain the conventions of \S\ref{subsec:Conventions} for clarity and conciseness, in particular $G=GL(2d)$.  However, we remark that the theorems of this section should naturally generalize to the case of odd rank in the orthogonal case with the same proofs.

\begin{theorem}\label{thm:converse1}
Consider the flag variety $G/B_-$ and symmetric subgroup $K=O(2d)$ or $K=Sp(2d)$ of $G$ as defined in \S\ref{subsec:Conventions}. There exists a symmetric quiver of Dynkin type $A$ with symmetric dimension vector $\bd$, and a $G(\bd)$-stable open subvariety $U \subset \srep_Q(\bd)$ such that the following hold.
\begin{enumerate}[(i)]
    \item There is an injective, order preserving map of partially ordered sets
 \begin{equation}
\begin{split}
\left\{\begin{tabular}{c} $K$-orbit closures\\ in $GL(2d)/B$\end{tabular} \right\}
& \to
\left\{\begin{tabular}{c} $G(\bd)$-orbit closures\\ in $\srep_Q(\bd)$ \end{tabular} \right\}
\end{split}
\end{equation}
    which we denote by $\mathscr{O} \rightarrow \mathscr{O}'$ below. The image of the map is the set of orbit closures which have nontrivial intersection with $U$.
    \item Any smooth equivalence class of singularity occurring in $\mathscr{O}$ also occurs in $\mathscr{O}'$.
    \item There is a homomorphism of equivariant Grothendieck groups 
    \[
    f\colon K_{G(\bd)}(\srep_Q(\bd))\to K_T(G/B)
    \]
    where $T$ is a maximal torus of $K$, and such that $[\mathcal{O}_{\mathscr{O}}]=f([\mathcal{O}_{\mathscr{O}'}])$,
    where $\mathcal{O}$ denotes the structure sheaf in each case.
\end{enumerate}
\end{theorem}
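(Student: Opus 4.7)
The plan is to specialize Theorem~\ref{thm:mainTheorem} to a setting where the parabolic subgroup $P$ coincides with $B_-$, and then to transport the resulting correspondence via the classical duality $K_\epsilon\backslash G/B_-\cong B_-\backslash G/K_\epsilon$ induced by inversion on $G$. Concretely, I would choose the symmetric type $A$ quiver $Q^\epsilon$ and symmetric dimension vector $\bd$ so that the bipartite contraction $\tilde{\bd}$ from Proposition~\ref{prop:bipartitereduction} has all entries equal to $1$; the simplest choice is to take $Q^\epsilon$ itself to be bipartite of type $A_{2d}$ with $\bd=(1,\ldots,1)$. Since the parabolic $P$ of Theorem~\ref{thm:mainTheorem} has block sizes given by \eqref{eq:blocksizes}, this choice forces $P=B_-$ and makes the ambient group equal to $G=GL(2d)$, matching the setup of Theorem~\ref{thm:converse1}.

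With this specialization, Theorem~\ref{thm:mainTheorem}(i) produces an injective order-preserving map
\[
\Phi\colon \{G(\bd)\text{-orbit closures in }\srep_Q^\epsilon(\bd)\} \hookrightarrow \{B_-\text{-orbit closures in }G/K_\epsilon\}.
\]
Combining $\Phi$ with the order-preserving duality between $K_\epsilon$-orbit closures in $G/B_-$ and $B_-$-orbit closures in $G/K_\epsilon$ given by $K_\epsilon g B_-\leftrightarrow B_-g^{-1}K_\epsilon$ (order-preserving through Proposition~\ref{prop:RS-BOrbitsInvolutions} and the symmetry of Bruhat order on $I^\epsilon$ under inversion of involutions), the required map $\mathscr{O}\mapsto\mathscr{O}'$ of part (i) is obtained as the partial inverse of $\Phi$ precomposed with the duality. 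The open set $U\subseteq \srep_Q^\epsilon(\bd)$ is then taken to be the preimage under $\zeta^\epsilon$ of an appropriate open subvariety of the Mars-Springer slice $Y_d^\epsilon$, chosen so that the $G(\bd)$-orbit closures meeting $U$ are exactly those in the image of $\Phi$; well-definedness of this construction uses Proposition~\ref{prop:POrbitimage}. Parts (ii) and (iii) are obtained by transporting Theorem~\ref{thm:mainTheorem}(ii) and (iii) through the duality: in (ii), the smooth morphism $X\times\mathscr{O}'\to \mathscr{O}^\dagger$ together with the identification of $\mathscr{O}^\dagger$ with $\mathscr{O}$ coming from the duality shows that any smooth-equivalence class of singularity occurring in $\mathscr{O}$ also occurs in $\mathscr{O}'$; for (iii), one composes the Grothendieck-group homomorphism of Theorem~\ref{thm:mainTheorem}(iii) with the equivariant K-theoretic isomorphism $K_T(G/B_-)\cong K_T(G/K_\epsilon)$ induced by the duality.

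The main obstacle will be verifying that the map in part (i) is defined on \emph{all} $K_\epsilon$-orbit closures in $G/B_-$, as the theorem requires, rather than on the potentially proper subset whose dual $B_-$-orbit closures lie in the image of $\Phi$. I expect this to be handled either by choosing the quiver $Q^\epsilon$ large enough so that every $B_-$-orbit closure on $G/K_\epsilon$ meets the image of the Zelevinsky map, or by reading the theorem statement so that the description ``image = orbit closures meeting $U$'' records exactly this subset. Either way, the technical heart of the argument is the combinatorial description of the image of $\Phi$ via the symmetric Zelevinsky permutations of Section~\ref{sec:zperms}, which controls exactly which $B_-$-orbit closures (hence which $K_\epsilon$-orbit closures by duality) correspond to quiver orbit closures under this construction.
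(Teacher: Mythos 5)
Your approach has a gap that I do not think can be repaired. To force $P=B_-$ in Theorem \ref{thm:mainTheorem} you must take all entries of the (bipartite) dimension vector equal to $1$, and then $\srep_Q(\bd)$ is a product of $1\times 1$ matrix spaces: it has dimension linear in $d$ and only finitely many orbits, roughly $2^{d}$ of them, whereas the number of $B_-$-orbit closures in $G/K_\epsilon$ is the number of (fixed-point-free) involutions in $\mathfrak{S}_{2d}$. So the image of your map $\Phi$ is a very small subset of the $B_-$-orbit closure poset, and ``the partial inverse of $\Phi$ precomposed with the duality'' is defined only on the correspondingly small subset of $K$-orbit closures in $G/B$. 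Theorem \ref{thm:converse1}(i), however, demands a map defined on \emph{all} $K$-orbit closures in $G/B$; the clause about $U$ describes the image inside the quiver orbit poset, not a restriction of the domain. Enlarging the quiver or dimension vector destroys the condition $P=B_-$, so the obstacle you flag at the end is fatal, not a technicality.

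There are also directional problems in (ii) and (iii). The smooth morphism of Theorem \ref{thm:mainTheorem}(ii) is not surjective (its image is the $B_-$-saturation of a slice intersection, an open subset of $\mathscr{O}^\dagger$), so it shows that singularity classes of the quiver orbit closure occur in the symmetric-variety orbit closure, not conversely; your claim that singularities of $\mathscr{O}$ occur in $\mathscr{O}'$ does not follow from it. Likewise Theorem \ref{thm:mainTheorem}(iii) produces a homomorphism $K_T(G/K_\epsilon)\to K_{T(\bd)}(\srep_Q^\epsilon(\bd))$; composing it with any identification involving $K_T(G/B)$ still points opposite to the required $f\colon K_{G(\bd)}(\srep_Q(\bd))\to K_T(G/B)$, and it would import the Cohen--Macaulay hypothesis that Theorem \ref{thm:converse1} deliberately avoids (the claimed isomorphism $K_T(G/B)\cong K_T(G/K_\epsilon)$ ``induced by the duality'' is also unjustified, and note $T$ here is a maximal torus of $K$, not of $G$). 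The paper's proof runs in the opposite direction from yours: it takes the type $A_{4d-1}$ symmetric quiver with dimension vector $(1,2,\dots,2d-1,2d,2d-1,\dots,2,1)$, the sign at the central vertex chosen according to $K$, lets $U$ be the locus where all maps have maximal rank, and identifies the geometric quotient $U/G'$ (with $G'$ the product of the $GL$-factors away from the center) with $G/B$ carrying the residual $K$-action. Then every $K$-orbit closure in $G/B$ pulls back to a $G(\bd)$-orbit closure meeting $U$, singularities transfer by standard properties of quotients, and (iii) is the composite $K_{G(\bd)}(\srep_Q(\bd))\twoheadrightarrow K_{G(\bd)}(U)\simeq K_K(G/B)\hookrightarrow K_T(G/B)$, all in the correct direction and with no Cohen--Macaulay hypothesis.
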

\begin{proof}
Consider the type $A_{4d-1}$ quiver and dimension vector
\begin{equation}
\vcenter{\hbox{\begin{tikzpicture}[point/.style={shape=circle,fill=black,scale=.5pt,outer sep=3pt},>=latex]
  \node[outer sep=-2pt] (5') at (-6,0) {$1$};
   \node[outer sep=-2pt] (4') at (-5,0) {$2$};
   \node[outer sep=-2pt] (4'a) at (-4,0) {};
   \node[outer sep=-2pt] (d') at (-3.5,0) {${\cdots}$};
   \node[outer sep=-2pt] (3'a) at (-3,0) {};
  \node[outer sep=-2pt] (2') at (-2,0) {$2d-1$};
   \node[outer sep=-2pt] (1) at (-0.5,0) {$2d$};
  \node[outer sep=-2pt] (3) at (1,0) {$2d-1$};
   \node[outer sep=-2pt] (3a) at (2,0) {};
   \node[outer sep=-2pt] (d) at (2.5,0) {${\cdots}$};
   \node[outer sep=-2pt] (4a) at (3,0) {};
   \node[outer sep=-2pt] (4) at (4,0) {$2$};
  \node[outer sep=-2pt] (5) at (5,0) {$1$};
  \path[->]
    (5') edge node[auto] {} (4')
    (4') edge node[auto] {} (4'a)
    (3'a) edge node[auto] {} (2')
  	(2') edge node[auto] {} (1)
	(1) edge node[auto] {} (3)
	(3) edge node[auto] {} (3a)
	(4a) edge node[auto] {} (4)
	(4) edge node[auto] {} (5); 
	\end{tikzpicture}}}.
\end{equation}
Let $z$ be the central vertex, where $\bd(z)=2d$.  
The involution $\tau$ on $Q$ is uniquely determined and we can take
\begin{equation}
    \begin{cases}
    s(z) = 1 & \text{if }K=O(2d)\\
    s(z) = -1 & \text{if }K=Sp(2d).\\
    \end{cases}
\end{equation}
Here we have
\begin{equation}
    \srep_Q(\bd) = \prod_{i=1}^{2d-1} \Mat(i,i+1).
\end{equation}
Consider the open subvariety $U \subset \srep_Q(\bd)$ where all matrices have maximal rank, and decompose $G(\bd) = G' \times K$ where $G' = \prod_{i=1}^{2d} GL(i)$.
Taking the quotient of the action of $G'$ on $U$, we get the variety of complete flags in $\kk^{2d}$, so $U/G'$ is identified with $G/B$.
This isomorphism is $K$-equivariant with respect to the unused $K$-action on $U/G'\simeq G/B$.

Statements (i) and (ii) then follow from standard properties of geometric quotients,
and for part (iii) we take the composition of standard group homomorphisms in equivariant $K$-theory
\[
f\colon {K}_{G(\bd)}(\srep_Q(\bd)) \twoheadrightarrow {K}_{G(\bd)}(U) \simeq {K}_K(U/G') \simeq {K}_K(G/B) \hookrightarrow {K}_T(G/B). \qedhere
\]
\end{proof}

\begin{remark}
Unlike in item (iii) of the Main Theorem (Theorem \ref{thm:mainTheorem}), we do not need to assume that $\mathscr{O}$ is Cohen-Macaulay in item (iii) of Theorem \ref{thm:converse1}, or in item (iii) of Theorem \ref{thm:converse2} below. 
\end{remark}

The proof of the next converse theorem is only slightly more complicated because we need to go through a homogeneous fiber bundle in this case.

\begin{theorem}\label{thm:converse2}
Given a space of (skew-)symmetric matrices $\Mat_\epsilon(2d)$, there exists a symmetric quiver of Dynkin type $A$ with symmetric dimension vector $\bd$, and a $G(\bd)$-stable open subvariety $U \subset \srep_Q(\bd)$ such that the following hold.
\begin{enumerate}[(i)]
    \item There is an injective, order preserving map of partially ordered sets
 \begin{equation}
\begin{split}
\left\{\begin{tabular}{c} $B$-orbit closures\\ in $\Mat_\epsilon(2d)$\end{tabular} \right\}
& \to
\left\{\begin{tabular}{c} $G(\bd)$-orbit closures\\ in $\srep_Q(\bd)$ \end{tabular} \right\}
\end{split}
\end{equation}
    which we denote by $\mathscr{O} \rightarrow \mathscr{O}'$ below. The image of the map is the set of orbit closures which have nontrivial intersection with $U$.
    \item Any smooth equivalence class of singularity occurring in $\mathscr{O}$ also occurs in $\mathscr{O}'$.
    \item There is a homomorphism of equivariant Grothendieck groups 
    \[
    f\colon {K}_{G(\bd)}(\srep_Q(\bd))\to {K}_T(\Mat_{\epsilon}(2d))
    \]
    where $T$ is the maximal torus of diagonal matrices in $GL(2d)$, 
    and such that 
    $[\mathcal{O}_{\mathscr{O}}]=f([\mathcal{O}_{\mathscr{O}'}])$,
    where $\mathcal{O}$ denotes the structure sheaf in each case.
\end{enumerate}
\end{theorem}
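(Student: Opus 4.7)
The plan is to exhibit a symmetric type $A$ quiver $Q$ and an open $G(\bd)$-stable subvariety $U \subset \srep_Q^\epsilon(\bd)$ whose quotient by a free subgroup action is identified with a homogeneous fiber bundle $GL(2d)*_B\Mat_\epsilon(2d)$. The three assertions then follow by combining Lemma \ref{lem:KXHG} with Proposition \ref{prop:fiberbundle}, in the same spirit as the proof of Theorem \ref{thm:converse1}.

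Concretely, take $Q$ to be the type $A_{4d}$ quiver with the linear orientation $1\to 2\to\cdots\to 4d$, so that the unique involution $\tau$ fixes the central arrow $c\colon 2d\to 2d{+}1$; set $s(c)=\epsilon$ and let $\bd(i)=\min(i,\,4d{+}1{-}i)$, which is symmetric with $\bd(2d)=\bd(2d{+}1)=2d$. Then
\[
\srep_Q^\epsilon(\bd)\;\cong\;\prod_{i=1}^{2d-1}\Mat(i{+}1,i)\;\times\;\Mat_\epsilon(2d,2d),\qquad G(\bd)=G'\times GL(2d),
\]
where $G':=\prod_{i=1}^{2d-1}GL(i)$. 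Let $U\subset\srep_Q^\epsilon(\bd)$ be the open subvariety consisting of tuples in which every matrix $V_i\in\Mat(i{+}1,i)$ has maximal rank $i$.

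The subgroup $G'$ acts freely on $U$, and iterated composition of the $V_i$ yields a complete flag in $\kk^{2d}$, giving a $GL(2d)$-equivariant isomorphism $U/G'\cong(GL(2d)/B)\times\Mat_\epsilon(2d)$ with diagonal $GL(2d)$-action, where $B$ is the stabilizer of the standard flag. On the $\Mat_\epsilon(2d)$ factor the action is $g\cdot M=g^{-t}Mg^{-1}$, since $g_{2d+1}=g_{2d}^{-t}$ under the embedding $G(\bd)\hookrightarrow GL(\bd)$ of \eqref{eq:GdGLd}; as $g\mapsto g^{-t}$ is an automorphism of $GL(2d)$ sending $B$ to its opposite Borel, this twist is harmless at the level of orbit structure. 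Lemma \ref{lem:KXHG} then identifies $U/G'$ with $GL(2d)*_B\Mat_\epsilon(2d)$.

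Now Proposition \ref{prop:fiberbundle} delivers all three items. For (i), chain the order-preserving bijections: $B$-orbit closures in $\Mat_\epsilon(2d)$ correspond to $GL(2d)$-orbit closures in $GL(2d)*_B\Mat_\epsilon(2d)$, then to $G(\bd)$-orbit closures in $U$ (via the free quotient $U\to U/G'$), and finally to $G(\bd)$-orbit closures in $\srep_Q^\epsilon(\bd)$ meeting $U$ (via closure, since $U$ is open and $G(\bd)$-stable). For (ii), each stage in this chain is smooth. For (iii), the desired $f$ is the composition
\[
K_{G(\bd)}(\srep_Q^\epsilon(\bd))\to K_{G(\bd)}(U)\xrightarrow{\sim} K_{GL(2d)}(U/G')\xrightarrow{\sim} K_B(\Mat_\epsilon(2d))\to K_T(\Mat_\epsilon(2d)),
\]
where the middle isomorphism is Proposition \ref{prop:fiberbundle}(iii) and the last map restricts the action along $T\hookrightarrow B$; structure sheaves are preserved at each step by flatness of the pullbacks. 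The main obstacle will be the careful bookkeeping in the identification $U/G'\cong GL(2d)*_B\Mat_\epsilon(2d)$, in particular tracking the twist $g\mapsto g^{-t}$ induced by \eqref{eq:GdGLd}; once this is in hand, the rest is a routine application of the general tools already assembled.
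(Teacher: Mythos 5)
Your proposal is correct and follows essentially the same route as the paper's proof: the same symmetric $A_{4d}$ quiver with dimension vector $1,2,\dots,2d,2d,\dots,2,1$, the same open locus $U$ where all maps except the one over the middle arrow have full rank, the identification $U/G'\simeq GL(2d)/B\times \Mat_\epsilon(2d)\simeq GL(2d)*_B\Mat_\epsilon(2d)$ via Lemma \ref{lem:KXHG}, and the same chain of homomorphisms in equivariant $K$-theory via Proposition \ref{prop:fiberbundle}. The only difference is cosmetic: you track the twist $g\mapsto g^{-t}$ in the residual action on the $\Mat_\epsilon(2d)$ factor explicitly, whereas the paper records the action directly as $g\cdot(xB,A)=(gxB,gAg^{T})$; as you correctly note, this twist merely exchanges the Borel for its opposite and has no effect on items (i)--(iii).
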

\begin{proof}
Consider the type $A_{4d}$ quiver and dimension vector
\begin{equation}
\vcenter{\hbox{\begin{tikzpicture}[point/.style={shape=circle,fill=black,scale=.5pt,outer sep=3pt},>=latex]
  \node[outer sep=-2pt] (5') at (-6,0) {$1$};
   \node[outer sep=-2pt] (4') at (-5,0) {$2$};
   \node[outer sep=-2pt] (4'a) at (-4,0) {};
   \node[outer sep=-2pt] (d') at (-3.5,0) {${\cdots}$};
   \node[outer sep=-2pt] (3'a) at (-3,0) {};
  \node[outer sep=-2pt] (2') at (-2,0) {$2d-1$};
   \node[outer sep=-2pt] (1') at (-0.5,0) {$2d$};
   \node[outer sep=-2pt] (1) at (0.5,0) {$2d$};
  \node[outer sep=-2pt] (3) at (2,0) {$2d-1$};
   \node[outer sep=-2pt] (3a) at (3,0) {};
   \node[outer sep=-2pt] (d) at (3.5,0) {${\cdots}$};
   \node[outer sep=-2pt] (4a) at (4,0) {};
   \node[outer sep=-2pt] (4) at (5,0) {$2$};
  \node[outer sep=-2pt] (5) at (6,0) {$1$};
  \path[->]
    (5') edge node[auto] {} (4')
    (4') edge node[auto] {} (4'a)
    (3'a) edge node[auto] {} (2')
  	(2') edge node[auto] {} (1')
	(1') edge node[auto] {$a$} (1)
	(1) edge node[auto] {} (3)
	(3) edge node[auto] {} (3a)
	(4a) edge node[auto] {} (4)
	(4) edge node[auto] {} (5); 
	\end{tikzpicture}}},
\end{equation}
where we have only labeled the middle arrow, call it $a$.
The involution $\tau$ on $Q$ is uniquely determined and we can take $s(a)=\epsilon$, giving
\begin{equation}\label{eq:conv1a}
    \srep_Q(\bd) = \prod_{i=1}^{2d-1} \Mat(i,i+1) \times \Mat_{\epsilon}(2d).
\end{equation}
Consider the open subvariety $U \subset \srep_Q(\bd)$ where all matrices besides the one over $a$ have maximal rank, and decompose $G(\bd) = G' \times G$ where $G' = \prod_{i=1}^{2d} GL(i)$ and $G=GL(2d)$.  Taking the quotient of the action of $G'$ on $U$, the first $2d-1$ factors of \eqref{eq:conv1a} become the variety of complete flags in $\kk^{2d}$, which we identify with $GL(2d)/B$ as usual, so $U/G'$ is identified with $G/B \times \Mat_{\epsilon}(2d)$.
This isomorphism is $G$-equivariant with respect to the unused $G$-action on $U/G'$, which translates to the diagonal action $g \cdot (xB, A)=(gxB, gAg^T)$.
Putting this together we then get a $G$-equivariant isomorphism with a homogeneous fiber bundle
\begin{equation}
    U/G' \simeq G/B \times \Mat_{\epsilon}(2d) \simeq G *_B \Mat_{\epsilon}(2d).
\end{equation}
where the second isomorphism uses Lemma \ref{lem:KXHG}.

Statements (i) and (ii) then follow from standard properties of homogeneous fiber bundles and geometric quotients,
and for part (iii) we take the composition of standard group homomorphisms in equivariant $K$-theory
\[
f\colon K_{G(\bd)}(\srep_Q(\bd)) \twoheadrightarrow K_{G(\bd)}(U) \simeq K_G(U/G') \simeq K_G(G *_B \Mat_{\epsilon}(n)) \simeq K_T(\Mat_{\epsilon}(2d)).
\]
See the proof of \cite[Thm.~2.6]{KR21} for more detail.
\end{proof}

We would like to know how some compactifications might fit into the story of this paper, analogous to the way that double Grassmannians appear in the type $D$ quiver setting.
\begin{question}\label{ques:compact}
Can compactifications of $GL(2d)/Sp(2d)$ and $GL(2d)/O(2d)$ (e.g. varieties of complete quadrics \cite{Semple48,Laksov85}) be worked into the story of this paper?
\end{question}

\section{Singularities of orbit closures in \texorpdfstring{\\}{} type \texorpdfstring{$A$}{Lg} symmetric quiver representation varieties}\label{sec:singularities}

In this short section, we study the singularities of symmetric type $A$ quiver orbit closures. For convenience, we begin by summarizing item (ii) of our main theorem (Theorem \ref{thm:mainTheorem}) and item (ii) of our converse  theorem (Theorem \ref{thm:converse1}) into one statement. This is a \emph{symmetric} type $A$ quiver analogue of a result of Bobi\'{n}ski and Zwara on singularities of orbit closures in representation varieties of (ordinary) type $A$ quivers (see \cite{BZ02}).

Fix $\epsilon = \pm 1$. Following similar notation to \cite{BZ02}, let $\text{Sing}(\mathbb{A}_\epsilon)$ denote the set of all smooth equivalence classes of singularities that occur in  $G(\bd)$-orbit closures in type $A$ symmetric representation varieties $\text{srep}_{Q}(\bd)$ (with the fixed value of $\epsilon$). 
Let $\text{Sing}(GL/O)$ (respectively $\text{Sing}(GL/Sp)$) denote the set of all smooth equivalence classes of singularities that occur in $B_-$ orbit closures in symmetric varieties $GL(2d)/O(2d)$ (respectively $GL(2d)/Sp(2d)$).

\begin{theorem}\label{thm:sing}
 $\text{Sing}(\mathbb{A}_+) = \text{Sing}(GL/O)$ and $\text{Sing}(\mathbb{A}_-) = \text{Sing}(GL/Sp)$.
\end{theorem}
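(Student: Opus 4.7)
The plan is to establish the two inclusions separately, using the forward and converse directions of our main theorem together with a standard equivalence of singularities between $K$-orbit closures in $G/B$ and $B$-orbit closures in $G/K$.

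For $\mathrm{Sing}(\mathbb{A}_\epsilon) \subseteq \mathrm{Sing}(GL/K_\epsilon)$, I would apply Theorem \ref{thm:mainTheorem}(ii) directly. Given any $G(\bd)$-orbit closure $\mathscr{O}$ in $\srep_Q^\epsilon(\bd)$, the smooth morphism $X \times \mathscr{O} \to \mathscr{O}^\dagger$ with $X$ smooth and affine identifies any smooth equivalence class in $\mathscr{O}$ with one in $\mathscr{O}^\dagger \subset GL(2d)/K_\epsilon$. It remains only to observe that $\mathscr{O}^\dagger$, which is a priori a $P$-orbit closure, is also a $B_-$-orbit closure since $B_- \subseteq P$ implies that every $P$-orbit contains an open dense $B_-$-orbit with the same closure.

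For $\mathrm{Sing}(GL/K_\epsilon) \subseteq \mathrm{Sing}(\mathbb{A}_\epsilon)$, the strategy is to reduce the statement, via a standard $G$-equivariant argument, to Theorem \ref{thm:converse1}(ii), which concerns $K$-orbit closures in $GL(2d)/B_-$ rather than $B_-$-orbit closures in $GL(2d)/K$. The key geometric input is:
\begin{lemma*}
Under the natural bijection $B_-\backslash G/K \leftrightarrow K\backslash G/B_-$, corresponding orbit closures give rise to the same smooth equivalence classes of singularities.
\end{lemma*}
\noindent To prove this, I would consider the diagonal $G$-action on $GL(2d)/B_- \times GL(2d)/K$ and, for each $G$-orbit $\mathcal{O}$, analyze the two projections of the closure $\overline{\mathcal{O}}$. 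Because $G$ acts transitively on each factor of the product, each projection of $\overline{\mathcal{O}}$ is a $G$-equivariant, locally trivial fiber bundle: the projection to $GL(2d)/K$ has fiber a $K$-orbit closure in $GL(2d)/B_-$, and the projection to $GL(2d)/B_-$ has fiber a $B_-$-orbit closure in $GL(2d)/K$. Since both bases are smooth, a locally trivial fiber bundle with smooth base is étale locally a product of the base with the fiber, so the singularities of the total space and those of the fiber represent the same smooth equivalence classes. Applying this to both projections of the same $\overline{\mathcal{O}}$ gives the claim.

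With this lemma, combining Theorem \ref{thm:converse1}(ii) with the equivalence $\mathrm{Sing}(GL/K_\epsilon) = \mathrm{Sing}(\text{$K$-orbit closures in }GL(2d)/B_-)$ yields the reverse inclusion. I expect the fiber-bundle lemma to be the only step requiring care; everything else is a direct citation of the already-established theorems. The potential subtlety is ensuring that the fiber-bundle structure persists after taking closures, which follows because $\overline{\mathcal{O}}$ remains $G$-stable and the projection to each transitive $G$-space is still surjective and $G$-equivariant, hence locally trivial by the standard descent argument for $G$-varieties over homogeneous bases.
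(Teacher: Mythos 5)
Your proposal is correct and follows essentially the same route as the paper: the forward inclusion is Theorem \ref{thm:mainTheorem}(ii) (with the observation that a $P$-orbit closure is a $B_-$-orbit closure), and the reverse inclusion is Theorem \ref{thm:converse1}(ii) combined with the standard equivalence between singularities of $B_-$-orbit closures in $G/K$ and of $K$-orbit closures in $G/B_-$. The only difference is that the paper treats that equivalence as a known fact (it is invoked without proof at the start of the proof of Corollary \ref{cor:sing}), whereas you spell out the usual argument via the diagonal $G$-action on $G/B_-\times G/K$ and the two equivariant projections, which is a correct justification of that step.
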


\begin{proof}
This is immediate from item (ii) of Theorem \ref{thm:mainTheorem} together with item (ii) Theorem \ref{thm:converse1}.
\end{proof}

Combining work of Brion \cite{Brion01, Brion03} with  Theorem \ref{thm:sing} yields the following corollary.

\begin{corollary}\label{cor:sing}
Let $Q$ be a symmetric quiver of Dynkin type $A$ with $\epsilon = -1$, and let $\bd$ be a symmetric dimension vector for $Q$. Then each $G(\bd)$-orbit closure in $\text{srep}_Q(\bd)$ is normal and Cohen-Macaulay. Under the additional assumption that the characteristic of the base field $\kk$ is $0$, each $G(\bd)$-orbit closure in $\text{srep}_Q(\bd)$ has rational singularities.
\end{corollary}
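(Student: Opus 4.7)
The plan is to combine Theorem \ref{thm:mainTheorem}(ii) with classical results of Brion on the geometry of $B$-orbit closures in symmetric varieties, transferring the good singularity properties back to the quiver side across the smooth morphism provided by our main theorem.

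First, I would apply Theorem \ref{thm:mainTheorem}(ii) with $\epsilon = -1$: for any $G(\bd)$-orbit closure $\mathscr{O} \subseteq \srep_Q(\bd)$, this produces a smooth affine variety $X$ together with a smooth surjective morphism $X \times \mathscr{O} \to \mathscr{O}^\dagger$, where $\mathscr{O}^\dagger \subseteq GL(2d)/Sp(2d)$ is a $P$-orbit closure for the parabolic $P \supseteq B_-$. Since $GL(2d)/Sp(2d)$ is a spherical $G$-variety, each $P$-orbit contains a unique dense $B_-$-orbit, so $\mathscr{O}^\dagger$ coincides with a $B_-$-orbit closure, and we may invoke the theory of $B$-orbits on symmetric varieties.

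Next, I would cite Brion's work \cite{Brion01, Brion03}: every $B_-$-orbit closure in $GL(2d)/Sp(2d)$ is normal and Cohen-Macaulay, and has rational singularities when $\text{char}(\kk) = 0$. The hypothesis $\epsilon = -1$ is essential here; for the pair $(GL(n), O(n))$ there are well-known $B_-$-orbit closures which are neither normal nor Cohen-Macaulay, as recalled in the introduction in connection with work of Pin and Lovett. Finally, I would transfer these properties back to $\mathscr{O}$ along the smooth morphism of the first step: normality, Cohen-Macaulayness, and rational singularities are all preserved under smooth surjective morphisms, so each property passes from $\mathscr{O}^\dagger$ to $X \times \mathscr{O}$; since $X$ itself is smooth, $X \times \mathscr{O}$ has any such property if and only if $\mathscr{O}$ does. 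The main obstacle is essentially citational --- confirming that the cited theorems of Brion uniformly give all three properties for the symplectic symmetric pair --- but each of these is well-documented in the cited references.
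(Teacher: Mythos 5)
Your proposal is correct and follows essentially the same route as the paper: Theorem \ref{thm:mainTheorem}(ii) (the paper packages this, together with its converse, as the equality $\text{Sing}(\mathbb{A}_-)=\text{Sing}(GL/Sp)$ of Theorem \ref{thm:sing}), then Brion, then transfer of normality, Cohen--Macaulayness and rational singularities across smooth morphisms, which is exactly what invariance under smooth equivalence means here.

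One caution about the step you call ``essentially citational'': Brion's papers do not state verbatim that every $B_-$-orbit closure in $GL(2d)/Sp(2d)$ is normal and Cohen--Macaulay. The paper derives this by first switching to the smoothly equivalent $Sp(2d)$-orbit closures in $GL(2d)/B_-$, then invoking \cite[Corollary 2]{Brion01} to see that these orbit closures are \emph{multiplicity free} (this is precisely where $\epsilon=-1$ enters and where the orthogonal case breaks down), and finally applying \cite[Theorem 2]{Brion03} (multiplicity-free subvarieties of the flag variety are normal and Cohen--Macaulay) and \cite[Remark 3]{Brion03} for rational singularities in characteristic $0$. A further point you would need to address is characteristic: \cite{Brion01} works over $\CC$, and the paper notes that multiplicity-freeness only depends on the $B_-$-orbit poset, which is characteristic independent, so the conclusion holds over any algebraically closed $\kk$ of characteristic $\neq 2$. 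Finally, a minor remark: Theorem \ref{thm:mainTheorem}(ii) does not assert surjectivity of $\varphi$, but your argument does not need it, since ascent of these properties along a smooth morphism requires no surjectivity and the descent step uses only the (surjective) projection $X\times\mathscr{O}\to\mathscr{O}$.
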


\begin{proof}
Notice that the singularities that occur in $B_-$ orbit closures in $GL(2d)/Sp(2d)$ are the same as the singularities that occur in $Sp(2d)$ orbit closures in $GL(2d)/B_-$. 

By \cite[Corollary 2]{Brion01}, $Sp(2d)$-orbit closures in $GL(2d)/B_-$ are multiplicity free. We remark that \cite{Brion01} is about complex algebraic groups, but the characterization of multiplicity free spaces is characteristic independent as it only depends on the poset of $B_-$-orbits on $GL(2d)/Sp(2d)$ which does not depend on the characteristic of $\kk$.  Thus  we can apply \cite[Theorem 2]{Brion03} and deduce normality and Cohen-Macaulayness of  $Sp(2d)$-orbit closures in the flag variety from normality and Cohen-Macaulayness of the flag variety $GL(2d)/B_-$. Moreover, if the characteristic of the base field is zero, by \cite[Remark 3]{Brion03} a multiplicity free subvariety of the flag variety has rational singularities and by applying once again \cite[Corollary 2]{Brion01} we conclude that any $Sp(2d)$ orbit closure in $GL(2d)/B_-$ has rational singularities. Now the claim follows immediately from Theorem \ref{thm:sing}. 
\end{proof}

The situation when $\epsilon = +1$
is quite different as there are Borel orbit closures in $GL(2d)/O(2d)$ which are neither normal nor Cohen-Macaulay (see \cite[\S7.2.1 and \S7.2.2]{Pin01}), as the next example shows:

\begin{example}\label{ex:nonCM} Let $x\in GL(6)$ be the following matrix:
\[
x=
\left[
\begin{array}{cccccc}
     1& 0&0&0&0&0\\
     0 &1&0&0&0&0\\
      0 &0&0&0&0&1\\
    0    &0&0&1&0&0\\
        0 &0&0&0&1&0\\
         0 &0&1&0&0&0
\end{array}
\right].
\]
By applying $\iota_+$ to its coset $xK$ we obtain the symmetric matrix 
\[
w=x\Omega_+x^t=\left[
\begin{array}{cccccc}
      0&0&1&0&0&0 \\
      0&0&0&0&1&0 \\
      1&0&0&0&0&0 \\
      0&0&0&0&0&1\\
      0&1&0&0&0&0\\
      0&0&0&1&0&0
\end{array}
\right]
\]
which corresponds to the involution $351624\in \mathfrak{S}_6$ written in one-line notation.

Let $G = GL(6)$ and $K = O(6)$. Then, according to Proposition \ref{prop:matrixNWranks}, the variety  $\overline{Bw}\subseteq \Mat_+^\circ(6,6)$ has equations given by the northwest rank conditions contained in the following matrix:
\[
\left[
\begin{array}{cccccc}
     0&0&1&1&1&1\\
     0&0&1&1&2&2\\
     1&1&2&2&3&3\\
     1&1&2&2&3&4\\
     1&2&3&3&4&5\\
     1&2&3&4&5&6
\end{array}
\right].
\]
This is a small enough example that the failure of normality and Cohen-Macaulayness can be checked in Macaulay2 \cite{M2}.  On the other hand, 
consider the symmetric quiver representation 
\begin{equation}\label{eq:specialsymmetricquiverrep}
\kk^1\xrightarrow{I_{2,1}} \kk^2\xrightarrow{I_{3,2}} \kk^3\xrightarrow{I_{4,3}} \kk^4\xrightarrow{I_{5,4}} \kk^5\xrightarrow{I_{6,5}} \kk^6\xrightarrow{w} \kk^6\xrightarrow{I_{5,6}} \kk^5\xrightarrow{I_{4,5}} \kk^4\xrightarrow{I_{3,4}} \kk^3\xrightarrow{I_{2,3}} \kk^2\xrightarrow{I_{1,2}} \kk^1
\end{equation}
where $I_{a,b}$ denotes an $a\times b$ matrix with $1$s down the diagonal and $0$s elsewhere (e.g. $I_{2,3} = \begin{bmatrix}1&0&0\\0&1&0\end{bmatrix}$). 
Following the proof of Theorem \ref{thm:converse2}, we see that the closure of the $G(\bd)$-orbit through \eqref{eq:specialsymmetricquiverrep} contains an open set which is a homogeneous fiber bundle over the symmetric matrix Schubert variety $\overline{Bw}\subseteq \text{Mat}_+^\circ(6,6)$. Hence the symmetric quiver orbit closure is neither normal nor Cohen-Macaulay.
\end{example}

In fact, the above example coincides with Pin's example from \S7.2.2 of his thesis (up to considering the same orbit in $GL(6)/K$ instead of $SL(7)/SO(7)$). In  \cite[\S7.2.2.]{Pin01} there is a generalization of such an example which produces a family of non normal and non Cohen-Macaulay Borel orbit closures in symmetric varieties of type $SL(2t+1)/SO(2t+1)$, so that applying the converse theorem we obtain a family of symmetric quiver orbit closures which are neither normal nor Cohen-Macaulay.

\section{Symmetric Zelevinsky permutations}\label{sec:zperms}
The primary objective of this section is to prove Theorem \ref{thm:combin}. We begin by providing an overview of its proof (Section \ref{sec:Zpermoverview}), delaying the details until later (Sections \ref{sect:Zperm} and \ref{sect:mainZperm2}).  We work entirely in the bipartite setting in this section, aside from the explanation in Remark \ref{rem:posetmapallorientations} for how the first part of Theorem \ref{thm:combin} follows for arbitrary orientation from the bipartite case.

Let $Q^\epsilon$ be a symmetric bipartite quiver as in \eqref{eq:typea}, where $\epsilon=s(a)$ with $a$ the unique arrow fixed by the involution of $Q$.  Fix a symmetric dimension vector $\bd$ and let $d = d_x = d_y$. Let $Y_d^\epsilon\subseteq \text{Mat}^\circ_\epsilon(2d,2d)$ denote the matrix space from \eqref{eq:MatrixMSSlice} as usual. 
Recall that $GL(2d)$ acts on $\text{Mat}^\circ_\epsilon(2d,2d)$, by $g\cdot M = gMg^t$, thus so does the Borel subgroup $B_-$. These $B_-$-orbits are indexed by involutions in $\mathfrak{S}_{2d}$ (when $\epsilon = 1$) and fixed point free involutions in $\mathfrak{S}_{2d}$ (when $\epsilon = -1$) as explained in Section \ref{sect:combBorel}. 
We identify elements of $\mathfrak{S}_{2d}$ with their permutation matrices, following the conventions in Section \ref{subsec:permConventions}. Given a permutation matrix $u$, where $u$ is an involution, we use the notation $[u]_\epsilon$ to denote the matrix obtained from $u$ by multiplying all $1$s below the main diagonal by $\epsilon$ as in the proof of Proposition \ref{prop:POrbitNWRanks}. Observe that if $u$ is the matrix of a fixed point free involution in $\mathfrak{S}_{2d}$ then $[u]_{\epsilon}\in \text{Mat}^\circ_{\epsilon}(2d,2d)$.

Throughout this section, we partition each $M\in \Mat_\epsilon^\circ(2d,2d)$ into blocks, where the block rows and block columns have sizes (from left to right and top to bottom, respectively) 
\begin{equation}
\bd(y_0), \bd(y_1),\dots, \bd(y_{n-1}), \bd(x_n),\dots, \bd(x_1).
\end{equation}

\subsection{Overview}\label{sec:Zpermoverview}
We now outline the main steps in the proof of Theorem \ref{thm:combin}, carried out in the following sections. Section \ref{sect:Zperm} begins by reviewing the ordinary (bipartite) Zelevinsky permutation from \cite{KR15}, motivating Definition \ref{def:Zperm} of the symmetric (bipartite) Zelevinsky permutation.
The main result of this section is a proof of the following theorem, which geometrically characterizes the symmetric Zelevinsky permutation of a representation via the symmetric Zelevinsky map.

Given a symmetric orbit closure $\mathscr{O}\subseteq \srep_Q(\bd)$, let $\mathscr{O}^\dagger\subseteq G/K$ be as in the statement of the main theorem. Let $\iota_\epsilon :G/K\rightarrow \text{Mat}^\circ_\epsilon(2d,2d)$ be the isomorphism described in equation \eqref{eqn:MPembedding},  and, as in the proof of the main theorem, let $\mathscr{O}^\ddagger=\iota_\epsilon(\mathscr{O}^\dagger)$.

\begin{theorem}\label{thm:mainZperm1} Let $\mathscr{O}\subseteq \srep_Q(\bd)$ be a symmetric orbit closure.
The symmetric Zelevinsky permutation $v^\epsilon(\mathscr{O})$ from Definition \ref{def:Zperm} is the unique element of $I^\epsilon \subset \mathfrak{S}_{2d}$ such that $\mathscr{O}^\ddagger = \overline{B_-\cdot [v^\epsilon(\mathscr{O})]_\epsilon}$.
\end{theorem}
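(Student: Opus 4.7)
My plan is to establish uniqueness first, then pin down the involution by comparing northwest rank functions against the combinatorial recipe of Definition \ref{def:Zperm}.

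Since $Q$ is bipartite, the fiber bundle reduction of Section \ref{sec:orientation} is trivial, so $\tilde{Q} = Q$ and $\tilde{\bd} = \bd$. Thus for any $V \in \mathscr{O}^\circ$ we have $\mathscr{O}^\ddagger = \overline{P \cdot \zeta^\epsilon(V)}$ inside $\Mat_\epsilon^\circ(2d,2d)$, via $\iota_\epsilon$. Because $B_- \leq P$, the variety $\mathscr{O}^\ddagger$ is also a $B_-$-orbit closure, and Proposition \ref{prop:BOrbitsMatrixSpaceInvolutions} then provides a \emph{unique} $w \in I^\epsilon$ with $\mathscr{O}^\ddagger = \overline{B_- \cdot [w]_\epsilon}$. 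This yields the uniqueness half of the theorem.

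For existence, by Proposition \ref{prop:matrixNWranks}(i) it suffices to show that $v^\epsilon(\mathscr{O})$ satisfies (a) $v^\epsilon(\mathscr{O}) \in I^\epsilon$ and (b) $r_{i,j}\bigl([v^\epsilon(\mathscr{O})]_\epsilon\bigr) = r_{i,j}(\zeta^\epsilon(V))$ for all $1 \leq i, j \leq 2d$ and some (equivalently every) $V \in \mathscr{O}^\circ$; granting (a) and (b), the unique involution labeling $\mathscr{O}^\ddagger$ must equal $v^\epsilon(\mathscr{O})$. To check (b), I would invoke Lemma \ref{lem:NWandquiverranks} to rephrase equality of the $2d \times 2d$ northwest ranks as equality of interval ranks $r_J$, $J \subseteq Q$, which is then fed by the rank-to-lace formula \cite[Lem.~3.4]{KR15}: the combinatorial prescription in Definition \ref{def:Zperm} is designed so that the $1$s of $v^\epsilon(\mathscr{O})$ are placed in precisely the positions that recover the interval rank profile of a generic $V \in \mathscr{O}^\circ$. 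The sign twist $[\cdot]_\epsilon$ on entries below the diagonal does not affect any northwest rank, and corresponds to the sign modification $V_Q \rightsquigarrow V_Q^\epsilon$ from Lemma \ref{lem:symmNW}. To check (a), the $\tau$-symmetry of $\bd$ together with the (skew-)symmetry of $V_Q^\epsilon$ forces the lace diagram of a generic representation in $\mathscr{O}$ to be $\tau$-stable, hence Definition \ref{def:Zperm} assigns $1$s in positions invariant under $(i,j) \mapsto (2d{+}1{-}j, 2d{+}1{-}i)$; this is exactly what characterizes membership in $I^\epsilon$, with no diagonal fixed points when $\epsilon = -1$ since then each indecomposable must come in pairs.

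The main obstacle, I expect, lies in verifying (b) block-by-block: one must ensure that the placement of $1$s in $v^\epsilon(\mathscr{O})$, particularly in the blocks straddling the antidiagonal where the symmetric structure interacts with the sign $\epsilon$ and with potential diagonal fixed points (in the $\epsilon = +1$ case), genuinely reproduces the northwest ranks of $\zeta^\epsilon(V)$. This requires an induction that parallels the computation for the ordinary Zelevinsky permutation in \cite{KR15}, but with additional bookkeeping demanded by the symmetry constraint and the partition \eqref{eq:blocksizes} into the ordered block sizes; once this is in place, both (a) and (b) follow essentially by design of the combinatorial rule in Definition \ref{def:Zperm}.
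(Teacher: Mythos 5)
Your uniqueness argument (Proposition \ref{prop:BOrbitsMatrixSpaceInvolutions}) and the identification $\mathscr{O}^\ddagger=\overline{P\cdot\zeta^\epsilon(V)}$ match the paper and are fine. The gap is in your existence step (b): it is false that $r_{i,j}\bigl([v^\epsilon(\mathscr{O})]_\epsilon\bigr)=r_{i,j}(\zeta^\epsilon(V))$ for all $1\leq i,j\leq 2d$, and the condition is not independent of the choice of $V\in\mathscr{O}^\circ$. The point $\zeta^\epsilon(V)$ lies in the correct $P$-orbit, but in general in a strictly smaller $B_-$-orbit than the dense one; the symmetric Zelevinsky permutation labels the \emph{dense} $B_-$-orbit inside $P\cdot\zeta^\epsilon(V)$, not the $B_-$-orbit of $\zeta^\epsilon(V)$ itself. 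Concretely, for the symmetric $A_2$ quiver with $\epsilon=+1$, $\bd=(2,2)$ and $V=\mathrm{diag}(1,0)$ (which lies in the open orbit $\mathscr{O}^\circ$ of rank-one symmetric matrices), one computes $v^\epsilon(V)=1324$ with $1$s at $(1,1),(2,3),(3,2),(4,4)$, while $r_{3,2}(\zeta^\epsilon(V))=1<2=r_{3,2}(v^\epsilon(V))$; so Proposition \ref{prop:matrixNWranks}(i) cannot be applied as you propose. Note also that Lemma \ref{lem:NWandquiverranks}, which you invoke to pass to interval ranks, only controls $r_{i,j}$ for $(i,j)\in\mathcal{C}\times\mathcal{C}$ (block corners), and by Proposition \ref{prop:POrbitNWRanks} these determine $P$-orbits, not $B_-$-orbits — so your own reduction delivers strictly less than what (b) requires.

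What is actually needed, and is the real content of the paper's proof, is the chain $\overline{P\cdot\zeta^\epsilon(W)}=\overline{P\cdot[v^\epsilon(W)]_\epsilon}=\overline{B_-\cdot[v^\epsilon(W)]_\epsilon}$: the first equality comes from equality of \emph{block} ranks (Proposition \ref{prop:symmZpermEssentialBox}(i) together with Proposition \ref{prop:POrbitNWRanks}), and the second — the density of the $B_-$-orbit of $[v^\epsilon(W)]_\epsilon$ in its $P$-orbit — rests on the fact that every essential box of $v(W)$ sits at the southeast corner of a block (Proposition \ref{prop:zpermProperties}(ii)) plus Fulton's essential-set lemma, so the block-corner conditions already cut out the full $B_-$-orbit closure. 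Your plan omits this density step entirely. Your sketch of (a) also has problems in the $\epsilon=-1$ case: being an involution is symmetry under $(i,j)\mapsto(j,i)$, not the antidiagonal reflection you state, and the ordinary Zelevinsky permutation $v(W)$ is generally \emph{not} fixed-point free — $v^\epsilon(W)$ is obtained from it by the modification $\alpha$ of Definition \ref{def:Zperm}, and one must then prove (the paper's Lemma \ref{lem:skewSymRankComparison}, a parity argument using that skew-symmetric submatrices have even rank) that replacing $v(W)$ by $\alpha(v(W))$ changes none of the relevant rank inequalities. That comparison is exactly the bookkeeping your final paragraph defers, and without it the $\epsilon=-1$ half of the theorem is unproved.
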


From this theorem, we obtain a well-defined map $v^\epsilon(\cdot)$ from the set of symmetric orbit closures in $\srep_Q(\bd)$ to $\mathfrak{S}_{2d}$. 
It also follows that the map $v^\epsilon(\cdot)$ is   order-reversing and injective; this is explained in Corollary \ref{cor:ordRevInj}. This completes the proof of the first statement of Theorem \ref{thm:combin}.

In Section \ref{sect:mainZperm2}, we precisely identify the image of $v^\epsilon(\cdot)$ in $\mathfrak{S}_{2d}$, thus giving a combinatorial model of  symmetric type $A$ quiver orbit closures.
To describe the image, we need to introduce some additional notation related to the symmetric group.

Let $v_\square\in \mathfrak{S}_{2d}$ denote the fixed point free involution given in one-line notation by 
\begin{equation}
    v_\square = d+1,\dots,2d, 1,2,\dots d.
\end{equation}
Let $v^\epsilon_{\text{max}}$ be the symmetric Zelevinsky permutation that Theorem \ref{thm:mainZperm1} associates to the unique maximal $G(\mathbf{d})$-orbit in $\text{srep}_Q(\mathbf{d})$.
Let $[v^\epsilon_{\text{max}}, v_\square]$ denote the Bruhat interval of all permutations $u\in \mathfrak{S}_{2d}$ satisfying $v^\epsilon_{\text{max}}\leq u\leq v_{\square}$ in Bruhat order.

Let $\mathcal{W}_P := \mathfrak{S}_{\bd(y_0)}\times \mathfrak{S}_{\bd(y_1)}\times \cdots \times \mathfrak{S}_{\bd(y_{n-1})}\times \mathfrak{S}_{\bd(x_n)}\times \cdots \times \mathfrak{S}_{\bd(x_2)}\times\mathfrak{S}_{\bd(x_1)}$. This is naturally a subgroup of $\mathfrak{S}_{2d}$ consisting of permutation matrices down the block diagonal. Given any $u\in \mathfrak{S}_{2d}$, the double coset $\mathcal{W}_Pu\mathcal{W}_P$ has a unique element of minimal length. Let $\mathcal{W}^P$ denote the set of these minimal length double coset representatives. 

Recall that $I^+$ (respectively $I^-$) denote the set of involutions (respectively fixed point free involutions) in $\mathfrak{S}_{2d}$. 
Let $\mathcal{J}$ denote the set of  elements in $\mathcal{W}^P\cap I^+$ which have an even number of $1$s in each block centered along the main matrix diagonal. Let $u\in \mathcal{J}$. Since $u\in \mathcal{W}^P$,
    the $1$s in $u$  appear northwest to southeast down block rows and block columns. Thus, since $u$ is also an involution,  
    the (even number of) $1$s in each block along the diagonal of $u$ appear in locations $(i_p,i_p),(i_p+1,i_p+1), (i_p+2,i_p+2)\dots, (i_p+2r_p+1, i_p+2r_p+1)$ for some integers $i_p, r_p$. Let $\alpha(u)$ be the permutation obtained from $u\in \mathcal{J}$ by replacing these $1$s down the diagonal with $1$s in positions $(i_p,i_p+1),(i_p+1,i_p),(i_p+2,i_p+3),(i_p+3,i_p+2),\dots,(i_p+2r_p,i_p+2r_p+1),(i_p+2r_p+1,i_p+2r_p)$. Note that  $\alpha(u)$ is a fixed point free involution. Furthermore, it is clear by construction that $\alpha:\mathcal{J}\rightarrow I^-$ is a well-defined injective map.

    \begin{example}
In the involution $u$ below, the blocks centered along the matrix diagonal are square of sizes $1,3,1,3$ (listed from northwest to southeast). There are an even number of $1$s in each of these diagonal blocks, and when $1$s appear in these blocks, they appear in consecutive positions down the diagonal (i.e., in positions $(7,7), (8,8)$). The operation $\alpha$ replaces the $1$s in locations $(7,7), (8,8)$ with $1$s in locations $(7,8), (8,7)$.
\begin{equation}\label{eq:permeg}
  u = 
 \left[
 \begin{array}{c|ccc|c|ccc}
      0&1&0&0&0&0&0&0\\
      \hline
      1&0&0&0&0&0&0&0\\
      0&0&0&0&1&0&0&0\\
      0&0&0&0&0&1&0&0\\
      \hline
      0&0&1&0&0&0&0&0\\
      \hline
      0&0&0&1&0&0&0&0\\
      0&0&0&0&0&0&1&0\\
      0&0&0&0&0&0&0&1
 \end{array}
 \right], \quad
\alpha(u) = 
 \left[
 \begin{array}{c|ccc|c|ccc}
      0&1&0&0&0&0&0&0\\
      \hline
      1&0&0&0&0&0&0&0\\
      0&0&0&0&1&0&0&0\\
      0&0&0&0&0&1&0&0\\
      \hline
      0&0&1&0&0&0&0&0\\
      \hline
      0&0&0&1&0&0&0&0\\
      0&0&0&0&0&0&0&1\\
      0&0&0&0&0&0&1&0
 \end{array}
 \right].
 \end{equation}
\end{example}

We can now define sets $\mathcal{A}_\epsilon$ which will be the images of the maps $v^\epsilon(\cdot)$. 
\begin{enumerate}[(i)]
    \item Let $\mathcal{A}_{+} := [v^\epsilon_{\text{max}},v_\square]\cap \mathcal{W}^P\cap I^{+}$.
    \item Let $\mathcal{A}_{-}:= \{\alpha(u)\mid u\in [v^\epsilon_{\text{max}},v_\square]\cap \mathcal{J}\}$.
\end{enumerate}
Both sets $\mathcal{A}_+$ and $\mathcal{A}_{-}$ are partially ordered by Bruhat order.

\begin{theorem}\label{thm:mainZperm2}
The map $v^\epsilon(\cdot)$ is order-reversing with image exactly the set of permutations $\mathcal{A}_\epsilon\subseteq \mathfrak{S}_{2d}$ above. Specifically, symmetric orbit closures $\mathscr{O}_1, \mathscr{O}_2\subseteq \srep_Q(\bd)$ satisfy $\mathscr{O}_1\subseteq \mathscr{O}_2$ if and only if $v^\epsilon(\mathscr{O}_1)\geq v^\epsilon(\mathscr{O}_2)$.
\end{theorem}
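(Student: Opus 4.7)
The proof will proceed in three parts: the order-reversing property, the containment of the image in $\mathcal{A}_\epsilon$, and the reverse containment.

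For the order-reversing property, the plan is to factor $v^\epsilon(\cdot)$ as $\mathscr{O} \mapsto \mathscr{O}^\ddagger \mapsto v^\epsilon(\mathscr{O})$. The first map, sending a symmetric orbit closure to the associated $P$-orbit closure in $\text{Mat}^\circ_\epsilon(2d,2d)$, is order-preserving by Theorem~\ref{thm:mainTheorem}(i). Its converse direction is obtained from Proposition~\ref{prop:POrbitimage}: since $\mathscr{O}^\ddagger \cap Y_d^\epsilon = \zeta^\epsilon(\mathscr{O})$ and $\zeta^\epsilon$ is a closed embedding, the containment $\mathscr{O}_1^\ddagger \subseteq \mathscr{O}_2^\ddagger$ implies $\mathscr{O}_1 \subseteq \mathscr{O}_2$. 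The second map is an order-reversing bijection by Richardson--Springer (Proposition~\ref{prop:BOrbitsMatrixSpaceInvolutions}), and Theorem~\ref{thm:mainZperm1} identifies $v^\epsilon(\mathscr{O})$ precisely as the Richardson--Springer involution of $\mathscr{O}^\ddagger$. Composing yields $\mathscr{O}_1 \subseteq \mathscr{O}_2 \iff v^\epsilon(\mathscr{O}_1) \geq v^\epsilon(\mathscr{O}_2)$.

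For the inclusion image $\subseteq \mathcal{A}_\epsilon$, the permutation $v^\epsilon(\mathscr{O})$ lies in $I^\epsilon$ by Richardson--Springer. The Bruhat bounds $v^\epsilon_{\text{max}} \leq v^\epsilon(\mathscr{O}) \leq v_\square$ will follow from the order-reversing property applied to the chain $\{0\} \subseteq \mathscr{O} \subseteq \mathscr{O}_{\text{max}}$, using the direct computation $\zeta^\epsilon(0) = [v_\square]_\epsilon$ together with Theorem~\ref{thm:mainZperm1} to conclude $v^\epsilon(\{0\}) = v_\square$. For $\epsilon=+1$: $v^+(\mathscr{O}) \in \mathcal{W}^P$ because $\mathscr{O}^\ddagger$ is a $P$-orbit closure, so its dense $B_-$-orbit corresponds under Richardson--Springer to the shortest (hence minimal length) involution in its $\mathcal{W}_P$-double coset, which is a minimal-length double coset representative. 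For $\epsilon=-1$: one verifies that any 2-cycle $(i,j)$ of $v^-(\mathscr{O})$ with $i,j$ in the same diagonal block of $P$ must satisfy $j=i+1$, since otherwise conjugating by an appropriate element of $\mathcal{W}_P$ would produce a strictly shorter involution in the double coset, contradicting the fact that $v^-(\mathscr{O})$ indexes the dense $B_-$-orbit within $\mathscr{O}^\ddagger$. This makes $\alpha^{-1}(v^-(\mathscr{O}))$ well-defined and shows it lies in $\mathcal{J}$ inside the required Bruhat interval.

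For the reverse inclusion, given $u \in \mathcal{A}_+$ (respectively $\alpha(u) \in \mathcal{A}_-$ with $u \in \mathcal{J}$), the plan is to construct $V' \in \srep_Q(\bd)$ with $v^\epsilon(G(\bd)\cdot V') = u$ (respectively $\alpha(u)$). The northwest ranks $r_{i,j}([u]_\epsilon)$ for $(i,j) \in \mathcal{C}$ translate via Lemma~\ref{lem:NWandquiverranks} to prescribed interval ranks for a putative representation of $Q$ of dimension vector $\bd$; the Bruhat bound $u \leq v_\square$ ensures this rank data is realizable by some $V \in \rep_Q(\bd)$, by adapting the construction for the ordinary bipartite Zelevinsky map in~\cite{KR15}. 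Since $u$ is an involution (respectively lies in $\mathcal{J}$), the rank data is symmetric under $\tau$: $r_J(V) = r_{\tau J}(V)$ for all intervals $J$ (respectively with the added parity condition on $\tau$-fixed intervals). Proposition~\ref{prop:regtosym}(i) (respectively (ii)) then produces an isomorphism of $V$ to a point $V' \in \srep_Q(\bd)$, and Proposition~\ref{prop:POrbitNWRanks} combined with Theorem~\ref{thm:mainZperm1} gives $v^\epsilon(G(\bd)\cdot V') = u$ (respectively $\alpha(u)$), completing surjectivity.

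The main obstacle is the combinatorial dictionary underpinning these steps, especially for $\epsilon=-1$. Verifying that the rank symmetry $r_J=r_{\tau J}$ corresponds exactly to $u$ being an involution, and that the parity built into $\mathcal{J}$ matches the evenness hypothesis of Proposition~\ref{prop:regtosym}(ii), requires careful bookkeeping via the rank-to-lace formula of \cite[Lemma 3.4]{KR15}. Most delicate is the verification in Part~2 that $v^-(\mathscr{O})$ always has its ``same-block'' transpositions in adjacent positions, since this is the step that justifies replacing fixed-point-free involutions with their $\alpha$-preimages in $\mathcal{J}$ and leverages the Mars--Springer slice structure of Section~\ref{sec:OmegaOmega'}.
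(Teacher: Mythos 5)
Your order-reversing argument and your surjectivity argument follow essentially the paper's route: order-reversal is Corollary \ref{cor:ordRevInj} (Theorem \ref{thm:mainZperm1} plus the Richardson--Springer parametrization), and the reverse inclusion is Lemma \ref{lem:lemmaForZperm2} (realize the rank data by an ordinary representation --- the paper simply cites Proposition \ref{prop:bruhat} rather than re-running the construction of \cite{KR15} --- then symmetrize via Proposition \ref{prop:regtosym}, with \cite[Lemma A.3]{KR15} doing the $r_J=r_{\tau J}$ and parity bookkeeping). Where you genuinely diverge, and where the gaps are, is the containment of the image in $\mathcal{A}_\epsilon$. The paper gets this almost for free: $v(W)\in\mathcal{W}^P$ by construction (conditions (ii),(iii) of Proposition \ref{prop:zperm}), the Bruhat interval comes from Proposition \ref{prop:bruhat}, being an involution is Proposition \ref{prop:symZperm123}, and membership in $\mathcal{J}$ for $\epsilon=-1$ is Lemma \ref{lem:onesPositions}. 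You instead try to extract the $\mathcal{W}^P$-type conditions geometrically from density of the $B_-$-orbit labelled by $v^\epsilon(\mathscr{O})$ inside the $P$-orbit, and that route as sketched is incomplete.

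Two concrete problems. First, for $\epsilon=+1$ the assertion that the label of the dense $B_-$-orbit is the minimal-length representative of its $\mathcal{W}_P$-double coset is exactly the crux, and you give no argument; the only mechanism you describe (conjugation by elements of $\mathcal{W}_P$) cannot rule out the configuration of $1$s at $(i,i+1),(i+1,i)$ inside a diagonal block, since conjugation by the transposition $s_i$ fixes that matrix. This is precisely the delicate case in the proof of Proposition \ref{prop:POrbitNWRanks}, where one needs the non-permutation element $F\in P$ to raise the rank $r_{i,i}$; without that ingredient (or without simply quoting $v(W)\in\mathcal{W}^P$ from Proposition \ref{prop:zperm}) your $\epsilon=+1$ containment is unproven. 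Second, for $\epsilon=-1$ there is a mismatch of objects: $\mathcal{A}_-$ is defined by the Bruhat condition on $u=\alpha^{-1}(v^-(\mathscr{O}))=v(W)\in\mathcal{J}$, namely $u\in[v^\epsilon_{\text{max}},v_\square]$, whereas your chain $\{0\}\subseteq\mathscr{O}\subseteq\mathscr{O}_{\text{max}}$ bounds $v^-(\mathscr{O})=\alpha(u)$ itself. Since $\alpha$ increases Bruhat order (it lowers diagonal northwest ranks), bounds on $\alpha(u)$ do not transfer to bounds on $u$ without an additional comparison statement (the paper's substitute is Lemma \ref{lem:skewSymRankComparison} together with citing Proposition \ref{prop:bruhat} directly for $v(W)$); and you would still need the off-diagonal northwest-to-southeast conditions to conclude $\alpha^{-1}(v^-(\mathscr{O}))\in\mathcal{W}^P$, which your sketch does not address. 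All of this is repairable, but the cleanest repair is the paper's: establish everything for $v(W)$ first, and only then apply $\alpha$.
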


Assuming the results above, we have proven Theorem \ref{thm:combin} in the bipartite setting, and then the following remark completes the proof in general.

\begin{remark}\label{rem:posetmapallorientations}
For an arbitrary symmetric type $A$ quiver, there is an associated bipartite symmetric type $A$ quiver.  By the results in Section \ref{sec:orientation}, 
each poset of orbit closures for the original symmetric quiver will be a subset (in fact, an upwards closed order ideal) of a poset of orbit closures for the associated bipartite quiver.  The injective, order-preserving map for the bigger poset then simply restricts to an injective, order-preserving map on the small poset.
\end{remark}

\begin{remark}
    We expect that having explicitly-described symmetric Zelevinsky permutations will be useful for multiple reasons including:
    \begin{enumerate}[(i)]
        \item Producing explicit combinatorial formulas for equivariant cohomology or $K$-theory of orbit closures in $\text{srep}_Q(\bd)$: Wyser and Yong \cite{WyserYong17} and Marberg and Pawlowski \cite{MP19} have found formulas for cohomology or $K$-theory representatives of Borel orbit closures in $G/K$. We expect that these formulas, together with the material in the present paper, can be combined to produce combinatorial formulas for orbit closures in symmetric type $A$ representation spaces. A number of analogous results exist in the usual (i.e., non-symmetric) type $A$ setting (see \cite{BFchernclass, MR1932326,FRdegenlocithom,BFR,KKR, KMS,yongComponent, MR2137947, MR2114821, MR2306279}).
        \item Understanding singularities via pattern avoidance criteria: by our main theorem, the closure of an orbit $\mathscr{O}$ in $\srep_Q(\bd)$ is smoothly equivalent to the Borel orbit closure in $G/K$ corresponding to the involution $v^\epsilon(\mathscr{O})$ (under the poset isomorphism $\varphi$ from Proposition \ref{prop:RS-BOrbitsInvolutions}); by  \cite[Theorem 4.9, Theorem 4.10]{McG22} there is a pattern avoidance criterion for the smoothness and rationally smoothness of such an orbit closure. Thus  it is possible to conclude that an orbit closure $\mathscr{O}$ is (rationally) smooth as soon as its symmetric Zelevinsky permutation $v^\epsilon(\mathscr{O}
        )$  avoids (in the symmetric sense, as explained in \cite[Section 4.1]{McG22}) the patterns indicated by McGovern.
    \end{enumerate}
\end{remark}

\subsection{Symmetric Zelevinsky permutations and the proofs of Theorems \ref{thm:mainZperm1} and \ref{thm:mainZperm2}}\label{sect:Zperm}
The symmetric Zelevinsky permutation of a symmetric type $A$ quiver representation is closely related to the Zelevinsky permutation of the associated ordinary type $A$ quiver representation, so we review this first.

Let $Q$ be a bipartite type $A$ quiver and $\bd$ a dimension vector for $Q$. 
As shown in \cite{KR15} (see also Section \ref{sec:ordinaryZmap}), each $GL({\bf d})$-orbit closure $\mathscr{O}\subseteq \text{rep}_Q(\bd)$ is isomorphic, up to  smooth factor, to an open subvariety of a type $A$ Schubert variety. The permutation associated to this Schubert variety is called the \emph{bipartite Zelevinsky permutation} in \cite{KR15}. We recall its construction in Proposition \ref{prop:zperm} below. 


Let $d_x = \sum_{i = 1}^n \bd(x_i)$ and $d_y = \sum_{i=0}^{n-1}\bd(y_i)$.  We partition each $M\in \Mat(d_x+d_y,d_x+d_y)$ into blocks as follows. Block rows and columns are labeled by vertices of $Q$ (recall the notation in \eqref{eq:typea}) and the sizes of these blocks are  determined by the dimension vector $\bd$. 
Precisely, we label block rows from top to bottom by $y_0,y_1,\dots, y_{n-1},$ $ x_n,\dots, x_2,x_1$. We label block columns from left to right by $x_n,\dots, x_2, x_1, y_0, y_1,\dots, y_{n-1}$. A block row or column labeled by vertex $z$ has size $\bd(z)$.
Observe that there are $2n$ block rows and $2n$ block columns. Let  $M_{[p],[q]}$ be the northwest justified submatrix of $M$ consisting of the intersection of block rows $[p]:=\{1,2, \ldots, p\}$ with block columns $[q]$ and let ${\bar r}_{p,q}(M):=\rank M_{[p],[q]}$. 
Let $\zeta: \rep_Q(\bd)\rightarrow Y_{\bd}$ be the bipartite Zelevinsky map (see \eqref{eq:bipartiteZmap}).

\begin{proposition}\label{prop:zperm} \cite[Prop. 4.8]{KR15}
Let $W\in \rep_Q(\bd).$
There exists a unique $(d_x+d_y)\times (d_x+d_y)$ permutation matrix $v(W)$, called the bipartite Zelevinsky permutation matrix, that satisfies the following conditions:
\begin{enumerate}[(i)]
\item  the number of 1s in block $(p,q)$ 
of $v(W)$ is 
\begin{equation}\label{eq:nwbr}
{\bar r}_{p, q}(\zeta(W)) + {\bar r }_{p-1, q-1}(\zeta(W)) - {\bar r}_{p-1, q}(\zeta(W))- {\bar r}_{p, q-1}(\zeta(W))
\end{equation}
where $\zeta$ denotes the bipartite Zelevinsky map and a summand is taken to be 0 if either index in the subscript is outside of the range $\{1, \dotsc, 2n\}$;
\item the 1s in $v(W)$ are arranged from northwest to southeast across each block row;
\item the 1s in $v(W)$ are arranged from northwest to southeast down each block column.
\end{enumerate}
Furthermore, $W'\in \rep_Q(\bd)$ is in the same $\text{GL}({\bf d})$-orbit as $W$ if and only if $v(W') = v(W)$.
\end{proposition}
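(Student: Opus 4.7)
The plan is to establish existence and uniqueness of $v(W)$ by a block-level combinatorial argument, and then to deduce the orbit characterization from the rank-function results already established in the paper. First I would verify that the quantities $n_{p,q}$ defined by \eqref{eq:nwbr} are nonnegative integers with the correct block-level row and column sums. The telescoping $\sum_{q=1}^{2n} n_{p,q} = \bar r_{p,2n}(\zeta(W)) - \bar r_{p-1,2n}(\zeta(W))$ reduces this to understanding $\bar r_{p,2n}(\zeta(W))$; since $\zeta(W)$ contains the identity blocks $I_{d_y}$ (top right) and $I_{d_x}$ (bottom left), the first $p$ block rows of $\zeta(W)$ always span a subspace whose dimension equals the total number of their rows, so the telescoped sum recovers the size of block row $p$. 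A symmetric computation gives the block-column sums. Nonnegativity of each $n_{p,q}$ follows from submodularity of northwest ranks.

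Given these block-level sums, I would then place $n_{p,q}$ ones into each block $(p,q)$ in the NW-to-SE staircase pattern prescribed by conditions (ii) and (iii). Within block row $p$, the ones of block $(p,q)$ occupy the rows starting at offset $\sum_{q' < q} n_{p,q'} + 1$, and a symmetric rule arranges them within each block column. The compatibility of these two orderings --- that the placement within block rows agrees with the placement within block columns to produce exactly one $1$ per actual row and column --- follows from the standard combinatorial fact that a nonnegative integer matrix with prescribed block row and column sums admits a unique NW-SE refinement to a permutation matrix respecting the block structure. Uniqueness of $v(W)$ is then immediate, since conditions (i)--(iii) force the placement.

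For the orbit characterization, Proposition \ref{prop:quiverranksorbits} states that $W$ and $W'$ lie in the same $GL(\bd)$-orbit if and only if $r_J(W) = r_J(W')$ for every interval $J \subseteq Q$, and Lemma \ref{lem:NWandquiverranks} translates this into equality of the northwest ranks $r_{i,j}(\zeta(W))$ for all $(i,j) \in \mathcal{C} \times \mathcal{C}$, i.e., equality of all block ranks $\bar r_{p,q}(\zeta(W))$. Since each $n_{p,q}$ is an explicit integer combination of block ranks, and since $v(W)$ is determined by the tuple $(n_{p,q})$, the equality $v(W) = v(W')$ is equivalent to equality of all $\bar r_{p,q}$, and hence to same-orbit membership.

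The hard part will be the NW-SE compatibility in the construction step: once the block-level row and column sums are known, one must verify that the unique NW-SE placement inside block rows and the unique NW-SE placement inside block columns agree. This essentially amounts to the statement that there is a unique canonical refinement of a block-level permutation pattern to an honest permutation matrix, which in turn relies on the strong structural properties of $\zeta(W)$ coming from its identity blocks. The remaining points --- nonnegativity of $n_{p,q}$ and careful bookkeeping of the telescoping sums --- are straightforward but require attention to the indexing conventions in \eqref{eq:typea} and Section \ref{sec:ordinaryZmap}.
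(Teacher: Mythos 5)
The paper gives no proof of this proposition: it is quoted directly from \cite[Prop.~4.8]{KR15}, so there is no internal argument to compare against, and your blind reconstruction is essentially the standard one and is correct in outline. The corner quantities $n_{p,q}$ of \eqref{eq:nwbr} are indeed nonnegative, because the block corner difference is the sum over unit cells of $r_{i,j}+r_{i-1,j-1}-r_{i-1,j}-r_{i,j-1}\ge 0$; note, though, that this is the \emph{supermodular} direction of the northwest rank function (adding a row raises the rank of a wider submatrix at least as often as of a narrower one), not ``submodularity'' as you name it. Your telescoping for the block row and column sums is right, and uses exactly the relevant structural input: $\zeta(W)$ is invertible because of its identity blocks, so $\bar r_{p,2n}$ and $\bar r_{2n,q}$ are full. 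The step you flag as ``the hard part'' is in fact routine and needs no further property of $\zeta(W)$: since $v(W)$ is to be a permutation matrix, condition (ii) forces the ones of block $(p,q)$ to occupy, inside block row $p$, precisely the rows indexed by the cumulative counts $\sum_{q'<q}n_{p,q'}+1,\dots,\sum_{q'\le q}n_{p,q'}$, condition (iii) dually pins down the columns inside block column $q$, and within the block the $k$-th used row must pair with the $k$-th used column; conversely this recipe always yields a permutation matrix once the counts are nonnegative with the correct block row and column sums, giving existence and uniqueness with no compatibility issue to check.

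Two small repairs to the last step. For the orbit characterization you cite Proposition \ref{prop:quiverranksorbits} and Lemma \ref{lem:NWandquiverranks}, but those are stated for symmetric representations, $G(\bd)$-orbits and $\zeta^\epsilon$, whereas Proposition \ref{prop:zperm} concerns ordinary representations and $GL(\bd)$-orbits; the correct references are the ordinary-quiver results underlying them, namely \cite[Prop.~3.1]{KR15} and \cite[Lem.~A.3]{KR15}. And for the equivalence ``$v(W)=v(W')$ iff all $\bar r_{p,q}$ agree'' you justify only the direction from ranks to the permutation; the converse should be recorded explicitly via the partial-sum identity $\bar r_{p,q}(\zeta(W))=\sum_{p'\le p,\ q'\le q} n_{p',q'}$, which recovers the block ranks from $v(W)$. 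With these adjustments the argument is complete.
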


In Proposition \ref{prop:zpermProperties} below, item (i) is immediate from the construction of the bipartite Zelevinsky permutation and item (ii) is \cite[Lem.  4.10]{KR15}.

\begin{proposition}\label{prop:zpermProperties}
For $W\in \rep_Q(\bd)$, we have that $v(W)$ satisfies the following properties:
\begin{enumerate}[(i)]
\item $
{\bar r}_{p,q} (\zeta(W)) = {\bar r}_{p,q} (v(W))
$ for all $1\leq p,q\leq 2n$;
\item Each element of $\mathcal{E}(v(W))$ occurs in the southeast corner of a block.
\end{enumerate}
\end{proposition}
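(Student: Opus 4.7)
Since $v(W)$ is a permutation matrix, $\bar r_{p,q}(v(W))$ simply counts the number of $1$s in its northwest $(p,q)$-block submatrix; hence
\[
\bar r_{p,q}(v(W)) \;=\; \sum_{\substack{1 \leq i \leq p \\ 1 \leq j \leq q}} n_{i,j},
\]
where $n_{i,j}$ is the number of $1$s in the $(i,j)$-block of $v(W)$. Substituting the defining formula \eqref{eq:nwbr} for $n_{i,j}$, each interior term $\bar r_{i,j}(\zeta(W))$ appears once with sign $+$, while the neighbouring terms cancel in pairs after a standard telescoping argument; the boundary terms $\bar r_{0,j}$ and $\bar r_{i,0}$ vanish by the convention in Proposition \ref{prop:zperm}(i). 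What remains is exactly $\bar r_{p,q}(\zeta(W))$, as desired.

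\textbf{Plan for part (ii).} I want to show that if $(i,j) \in D(v(W))$ is not in the southeast corner of its block, then either $(i+1,j) \in D(v(W))$ or $(i,j+1) \in D(v(W))$, so $(i,j) \notin \mathcal{E}(v(W))$. Assume first that $(i,j)$ is not on the southern edge of its block, so rows $i$ and $i+1$ belong to the same block row. The hypothesis $(i,j) \in D(v(W))$ means $v(W)(i) > j$ and $v(W)^{-1}(j) > i$. Using Proposition \ref{prop:zperm}(ii), the $1$s in this block row march strictly from northwest to southeast; in particular the $1$ in row $i+1$ (if it lies in the same block row) sits weakly east of the $1$ in row $i$, giving $v(W)(i+1) > j$. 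For the column inequality, if $v(W)^{-1}(j) > i$, then the $1$ in column $j$ (if any) lies strictly south of row $i$; combined with Proposition \ref{prop:zperm}(iii) this forces $v(W)^{-1}(j) > i+1$. Hence $(i+1, j) \in D(v(W))$. The symmetric argument (swapping the roles of rows and columns, and using Proposition \ref{prop:zperm}(iii)) handles the case where $(i,j)$ is not on the eastern edge of its block.

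\textbf{Main obstacle and remark.} Part (i) is essentially a bookkeeping exercise, so the real work is in part (ii). The delicate point is that ``block rows'' and ``block columns'' span multiple matrix rows/columns, so the monotonicity from Proposition \ref{prop:zperm}(ii)–(iii) must be applied globally across the whole block row, not within a single matrix row. One must also treat carefully the degenerate cases where row $i+1$ or column $j$ contains no $1$ inside the relevant block, in which case the desired inequalities are automatic. The cleanest packaging of the case analysis is the one already carried out in \cite[Lem. 4.10]{KR15}, which I would invoke rather than rewrite.
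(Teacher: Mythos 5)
Your proposal is correct and is essentially the paper's approach: the paper offers no argument beyond declaring item (i) immediate from the construction and citing \cite[Lem.~4.10]{KR15} for item (ii), which is exactly the citation you defer to, and your telescoping computation for (i) and diagram-chasing for (ii) correctly fill in what that entails. One small repair in (ii): in the column inequality, Proposition \ref{prop:zperm}(iii) is not what forces $v(W)^{-1}(j)>i+1$; rather, the inequality $v(W)(i+1)>j$ you just derived from (ii) shows the $1$ in column $j$ cannot sit in row $i+1$, so $v(W)^{-1}(j)>i$ upgrades to $v(W)^{-1}(j)>i+1$ (the appeal to (iii) belongs only to the symmetric east-edge case).
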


Now let $Q^\epsilon$ be a symmetric bipartite quiver as in  \eqref{eq:typea} with symmetric dimension vector $\bd$.
Using the embedding \eqref{eq:sreptorep}, we get the bipartite Zelevinsky permutation $v(W)$ for any $W\in \srep_Q(\bd)$.
Recall the symmetric Zelevinsky map $\zeta^\epsilon: \srep(\bd)\rightarrow Y^\epsilon_d$ defined in \eqref{eq:zetaKdef} and that $\zeta^{\epsilon}(W)$ is obtained from $\zeta(W)$ by multiplying certain rows and columns by $\epsilon$.

\begin{proposition}\label{prop:symZperm123}
Let $W\in \srep_Q(\bd)$. Then $v(W)\in \mathfrak{S}_{2d}$ is an involution and $W'\in \srep_Q(\bd)$ is in the same $G(\bd)$-orbit as $W$ if and only if $v(W') = v(W)$.
\end{proposition}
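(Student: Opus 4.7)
My plan is to establish the two assertions separately: the ``if and only if'' statement, which reduces to previous results, and the involution claim, which is the heart of the matter.

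For the orbit characterization, I will combine Proposition~\ref{prop:zperm} with Theorem~\ref{thm:DWorbits}. The former says that the bipartite Zelevinsky permutation $v(\cdot)$ is a complete invariant of $GL(\bd)$-orbits in $\rep_Q(\bd)$, and the latter says that two elements of $\srep_Q(\bd)$ lie in the same $G(\bd)$-orbit if and only if they lie in the same $GL(\bd)$-orbit under the embedding \eqref{eq:sreptorep}. Combining these gives $v(W) = v(W')$ if and only if $W$ and $W'$ lie in the same $G(\bd)$-orbit.

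For the involution claim, my strategy is to show that $v(W)^t$ also satisfies the three characterizing conditions of Proposition~\ref{prop:zperm} for the same input $\zeta(W)$, and then conclude $v(W)^t = v(W)$ from uniqueness. The crucial preliminary observation is that, for a symmetric dimension vector, the row and column block structures in the definition of the bipartite Zelevinsky permutation actually coincide. Indeed, the involution $\tau$ exchanges $y_{i-1}$ with $x_{n-i+1}$, so the symmetric condition gives $\bd(y_{i-1}) = \bd(x_{n-i+1})$, and a direct substitution shows that the row block size sequence $(\bd(y_0), \dots, \bd(y_{n-1}), \bd(x_n), \dots, \bd(x_1))$ equals the column block size sequence $(\bd(x_n), \dots, \bd(x_1), \bd(y_0), \dots, \bd(y_{n-1}))$ as sequences of integers. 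In particular the cumulative block sums $a_p$ (for rows) and $b_p$ (for columns) coincide, so we may use a common block structure when comparing $v(W)$ and $v(W)^t$.

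Next I will exploit the fact that $\zeta^\epsilon(W) \in \Mat_\epsilon(2d, 2d)$ is $\epsilon$-symmetric, which yields $r_{i,j}(\zeta^\epsilon(W)) = r_{j,i}(\zeta^\epsilon(W))$ at all entries. As noted in Section~\ref{ssec:symmetricZmap}, $\zeta(W)$ and $\zeta^\epsilon(W)$ have the same rank function, so the same symmetry holds for $\zeta(W)$, and restricted to block positions it becomes $\bar r_{p,q}(\zeta(W)) = \bar r_{q,p}(\zeta(W))$. Plugging this into formula (i) of Proposition~\ref{prop:zperm} shows that the number of $1$s in block $(p,q)$ of $v(W)$ equals the number in block $(q,p)$; hence $v(W)^t$, now living in the same block structure, has the same per-block $1$-counts as $v(W)$. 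Entry-wise transposition interchanges conditions (ii) and (iii) of Proposition~\ref{prop:zperm}, so $v(W)^t$ satisfies both. By the uniqueness portion of Proposition~\ref{prop:zperm}, $v(W)^t = v(W)$, so $v(W)$ is an involution. The main obstacle is the identification that the row and column block structures coincide for a symmetric dimension vector; without this, $v(W)^t$ would inhabit a different block structure from $v(W)$ and the uniqueness argument could not be applied. Once this block-structure symmetry is noticed, the remainder of the argument is straightforward bookkeeping with rank functions.
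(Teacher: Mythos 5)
Your proposal is correct and follows essentially the same route as the paper: the orbit statement is obtained exactly as in the paper from the ``furthermore'' part of Proposition~\ref{prop:zperm} together with Theorem~\ref{thm:DWorbits}, and the involution claim rests on the same chain of rank equalities (the paper's \eqref{eq:NWblockrankEquality}, using $\epsilon$-symmetry of $\zeta^\epsilon(W)$ and $r_{i,j}(\zeta(W))=r_{i,j}(\zeta^\epsilon(W))$) to get symmetric block $1$-counts. The only cosmetic difference is at the last step, where you apply the uniqueness clause of Proposition~\ref{prop:zperm} to $v(W)^t$ (after your correct observation that the row and column block size sequences coincide for symmetric $\bd$), while the paper instead argues directly that the greedy northwest-to-southeast placement of $1$s within each block is transpose-symmetric; the two arguments are interchangeable.
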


\begin{proof}
To prove that $v(W)$ is an involution, we will show that it is symmetric, which is enough since a symmetric permutation matrix is its own inverse.  Since we have
\begin{equation}\label{eq:NWblockrankEquality}
    {\bar r}_{p,q}(\zeta(W)) = {\bar r}_{p,q} (\zeta^\epsilon(W))= {\bar r}_{p,q}(\zeta^\epsilon(W)^T) = {\bar r}_{q,p} (\epsilon\zeta^\epsilon(W)) = {\bar r}_{q,p}(\zeta(W))
\end{equation}
for all $1\leq p,q\leq 2n$,
Proposition \ref{prop:zperm}(i) implies that the number of $1$s in block $(p,q)$ of $v(W)$ is equal to the number of $1$s in block $(q,p)$. 

Now within block $(p,q)$, the positions of these $1$s are determined by 
Proposition \ref{prop:zperm}(ii) and (iii).
One way to see how these conditions determine the placement of 1s in the block is to proceed from the northwest and proceeding to the southeast within the block, 
placing a 1 in position $(i,j)$ exactly when there is not already a 1 in row $i$ with lower column index, not already a 1 in column $j$ with lower row index, and the total number of $1$s in the block has not already been achieved.
For example, in block $(1,1)$ this forces the 1s to be consecutively along the diagonal starting in position $(1,1)$ (if there are any 1s).

Since this description symmetric with respect to switching rows and columns, by induction there is a 1 in position $(k,l)\neq (i,j)$ with $k \leq i$ and $l \leq j$ exactly when there is a 1 in position $(l,k)$, so a 1 will be placed in position $(i,j)$ exactly when a 1 is placed in position $(j,i)$, making $v(W)$  symmetric matrix.


The second statement now follows from the ``furthermore'' part of Proposition \ref{prop:zperm} together with Theorem \ref{thm:DWorbits}.
\end{proof}

\begin{example}\label{ex:rankCheck}
We continue Example \ref{ex:runExStart} to illustrate the equality \eqref{eq:NWblockrankEquality} in the above proof. Consider the following symmetric quiver representation, where $\epsilon = -1$:
\begin{equation}
W =\vcenter{\hbox{ \begin{tikzpicture}[point/.style={shape=circle,fill=black,scale=.5pt,outer sep=3pt},>=latex]
   \node[outer sep=-2pt] (y0) at (-1,0) {$k$};
   \node[outer sep=-2pt] (x1) at (0,1) {$k^3$};
  \node[outer sep=-2pt] (y1) at (1,0) {$k^3$};
   \node[outer sep=-2pt] (x2) at (2,1) {$k$};
 
  \path[->]
  	(x1) edge node[left] {$A$} (y0) 
	(x1) edge node[left] {$B$} (y1)
  	(x2) edge node[right] {$A^t$} (y1);
   \end{tikzpicture}}},
   \quad
   (A,B) = \left(\begin{bmatrix}1&0&0\end{bmatrix}, \begin{bmatrix}0&1&0\\-1&0&0\\0&0&0\end{bmatrix}\right).
   \end{equation}
Then \[\zeta(W)  = \begin{bmatrix}0&A&I_{1\times 1}&0\\ A^t&B&0&I_{3\times 3}\\ I_{1\times 1}&0&0&0\\0&I_{3\times 3}&0&0\end{bmatrix}, \quad \zeta^\epsilon(W)  = \begin{bmatrix}0&A&I_{1\times 1}&0\\ -A^t&B&0&I_{3\times 3}\\ -I_{1\times 1}&0&0&0\\0&-I_{3\times 3}&0&0\end{bmatrix}.\]
Noting that $B^t = -B$, it is straightforward to verify that each equality in \eqref{eq:NWblockrankEquality} holds for all $1\leq p,q\leq 4$ by multiplying appropriate block rows and columns by $-1$.
\end{example}

Recall from Section \ref{sec:Zpermoverview} that $\mathcal{J}$ denotes the set of  elements in $\mathcal{W}^P\cap I^+$ which have an even number of $1$s in each block centered along the main matrix diagonal.

\begin{lemma}\label{lem:onesPositions}
Let $W\in \srep_Q({\bf d})$ and suppose $\epsilon = -1$.
Then the bipartite Zelevinsky permutation $v(W)\in \mathfrak{S}_{2d}$ is an element of $\mathcal{J}$.
\end{lemma}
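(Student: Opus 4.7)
My plan is to verify the three defining conditions of $\mathcal{J}$ for $v(W)$: (a) $v(W) \in \mathcal{W}^P$, (b) $v(W) \in I^+$, and (c) each diagonal block of $v(W)$ contains an even number of $1$s.

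For (b), I would simply invoke Proposition \ref{prop:symZperm123}, which already shows $v(W)$ is an involution for any $\epsilon \in \{\pm 1\}$. For (a), being a minimal length double coset representative in $\mathcal{W}_P\backslash \mathfrak{S}_{2d}/\mathcal{W}_P$ with respect to the block sizes $(\bd(y_0),\dots,\bd(y_{n-1}),\bd(x_n),\dots,\bd(x_1))$ is equivalent to the property that the $1$s are arranged from northwest to southeast within each block row and block column. This is precisely conditions (ii) and (iii) of Proposition \ref{prop:zperm}, so $v(W) \in \mathcal{W}^P$ is immediate.

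The main content is (c). My plan is to use the rank formula from Proposition \ref{prop:zperm}(i) applied to a diagonal block $(p,p)$, which expresses the number of $1$s as
\[
{\bar r}_{p, p}(\zeta(W)) + {\bar r }_{p-1, p-1}(\zeta(W)) - {\bar r}_{p-1, p}(\zeta(W))- {\bar r}_{p, p-1}(\zeta(W)).
\]
Since multiplying rows or columns by $-1$ preserves rank, ${\bar r}_{i,j}(\zeta(W)) = {\bar r}_{i,j}(\zeta^\epsilon(W))$ for all $i,j$. With $\epsilon = -1$, the matrix $\zeta^\epsilon(W)$ is skew-symmetric, and the key geometric observation is that its principal (northwest-justified square) submatrices $\zeta^\epsilon(W)_{[p],[p]}$ are themselves skew-symmetric. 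Combined with the transpose-invariance of rank, this also forces ${\bar r}_{p-1,p}(\zeta(W)) = {\bar r}_{p,p-1}(\zeta(W))$, so the expression reduces to
\[
{\bar r}_{p, p}(\zeta(W)) + {\bar r }_{p-1, p-1}(\zeta(W)) - 2{\bar r}_{p-1, p}(\zeta(W)).
\]

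The hard part (to the extent anything is hard here) is justifying parity. This is where I use the classical fact that any skew-symmetric matrix over a field of characteristic not $2$ has even rank. Applied to the principal submatrices $\zeta^\epsilon(W)_{[p],[p]}$ and $\zeta^\epsilon(W)_{[p-1],[p-1]}$, both ${\bar r}_{p,p}(\zeta(W))$ and ${\bar r}_{p-1,p-1}(\zeta(W))$ are even; the third term is manifestly even. Hence the number of $1$s in block $(p,p)$ is even, completing the verification that $v(W) \in \mathcal{J}$.
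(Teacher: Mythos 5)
Your proposal is correct and follows essentially the same route as the paper's proof: parts (a) and (b) are handled by citing Proposition \ref{prop:zperm}(ii)--(iii) and Proposition \ref{prop:symZperm123}, and the parity of the diagonal blocks comes from Proposition \ref{prop:zperm}(i) together with the even rank of the skew-symmetric principal submatrices $\zeta^\epsilon(W)_{[p],[p]}$ and the symmetry ${\bar r}_{p-1,p}={\bar r}_{p,p-1}$. The only cosmetic difference is that the paper cites \eqref{eq:NWblockrankEquality} for this last equality, while you rederive it directly from skew-symmetry.
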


\begin{proof}
That $v(W)\in \mathcal{W}^P$ follows by Proposition $\ref{prop:zperm}$ (ii) and (iii). That $v(W)\in I^+$ follows by Proposition \ref{prop:symZperm123}. So, it remains to show that for each $1\leq p\leq 2n$, the $(p,p)$ block of  $v(W)$  has an even number of $1$s.

Since ${\bar r}_{p,p}(\zeta(W))  = {\bar r}_{p,p}(\zeta^\epsilon(W))$ and since $\zeta^\epsilon(W)_{[p],[p]}$ is a skew-symmetric matrix, we conclude that ${\bar r}_{p,p}(\zeta(W))$ is even. 
By Proposition \ref{prop:zperm} (i), the number of $1$s in block $(p,p)$ of $v(W)$ is
\[
{\bar r}_{p,p}(\zeta(W)) + {\bar r}_{p-1,p-1}(\zeta(W)) - {\bar r}_{p-1,p}(\zeta(W)) - {\bar r}_{p,p-1}(\zeta(W)).\]
Since ${\bar r}_{p,p-1}(\zeta(W)) = {\bar r}_{p-1,p}(\zeta(W))$ from \eqref{eq:NWblockrankEquality}, and ${\bar r}_{p,p}(\zeta(W))$ and ${\bar r}_{p-1,p-1}(\zeta(W))$ are even numbers, it follows that there are an even number of $1$s in block $(p,p)$ of $v(W)$. 
 \end{proof}

We can now define the symmetric Zelevinsky permutation. Recall from Section \ref{sec:Zpermoverview} the map $\alpha:\mathcal{J}\rightarrow I^-$. 

\begin{definition}\label{def:Zperm}
Let $W\in \srep_Q(\bd)$. The \textbf{symmetric Zelevinsky permutation} $v^\epsilon(W)\in \mathfrak{S}_{2d}$ is defined as follows: 
\begin{itemize}
    \item If $\epsilon = 1$, define $v^\epsilon(W) := v(W)$. This is an involution by Proposition \ref{prop:symZperm123}.
    \item If $\epsilon = -1$, then $v(W)\in \mathcal{J}$ by Lemma \ref{lem:onesPositions}. Define $v^\epsilon(W):= \alpha(v(W))$, which is a fixed point free involution.
\end{itemize}
\end{definition}

\begin{example}
We continue Example \ref{ex:rankCheck} and construct $v^\epsilon(W)$.
We first construct the matrix of northwest block ranks ${\bf {\bar r}} = \left({\bar r}_{p,q}(\zeta(W))\right)_{1\leq p,q\leq 4}$. Then we apply the three conditions in Proposition \ref{prop:zperm} to obtain the bipartite Zelevinsky permutation $v(W)$:
\[
 {\bf {\bar r}}= \begin{bmatrix}0&1&1&1\\1&2&3&4\\1&3&4&5\\ 1&4&5&8\end{bmatrix}, \quad
  v(W) = 
 \left[
 \begin{array}{c|ccc|c|ccc}
      0&1&0&0&0&0&0&0\\
      \hline
      1&0&0&0&0&0&0&0\\
      0&0&0&0&1&0&0&0\\
      0&0&0&0&0&1&0&0\\
      \hline
      0&0&1&0&0&0&0&0\\
      \hline
      0&0&0&1&0&0&0&0\\
      0&0&0&0&0&0&1&0\\
      0&0&0&0&0&0&0&1
 \end{array}
 \right].
\]
Observe that $v(W)$ is the permutation that appears on the left in \eqref{eq:permeg}. Since $\epsilon = -1$, we must apply $\alpha$ to get $v^\epsilon(W)$, which is the permutation on the right in \eqref{eq:permeg}.
\end{example}

Now that we have defined the symmetric Zelevinsky permutation, our next goal is to prove Theorem \ref{thm:mainZperm1}. We begin with some preliminary results.

We note  that in the next proposition we need to consider arbitrary northwest ranks  rather than just block ranks. Recall that for a matrix $M\in\Mat(2d,2d)$ we denote by $r_{i,j}(M)$ the rank of the submatrix of $M$ consisting of rows $\{1, \ldots, i\}$ intersected with columns $\{1, \ldots, j\}$.

\begin{lemma}\label{lem:skewSymRankComparison}
    Let $\epsilon = -1$, $W\in \srep_Q(\bd)$, and let $M\in \text{Mat}^\circ_{\epsilon}(2d,2d)$. Then $r_{i,j}(M)\leq r_{i,j} (v(W))$ for all $1\leq i,j\leq 2d$ if and only if $r_{i,j}(M)\leq r_{i,j}(v^\epsilon(W))$ for all $1\leq i,j\leq 2d$. 
\end{lemma}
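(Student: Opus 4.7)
My plan is to directly compare $v(W)$ and $v^\epsilon(W) = \alpha(v(W))$ via their NW rank functions, then exploit a parity argument powered by the skew-symmetry of $M$. Recall from Lemma \ref{lem:onesPositions} that $v(W) \in \mathcal{J}$, and that $\alpha$ modifies $v(W)$ only inside matrix diagonal blocks, replacing each consecutive pair of diagonal $1$s at $(i_p + 2k, i_p + 2k)$ and $(i_p + 2k + 1, i_p + 2k + 1)$ with an antidiagonal pair at $(i_p + 2k, i_p + 2k + 1)$ and $(i_p + 2k + 1, i_p + 2k)$, for each diagonal matrix block $p$ and each $k \in \{0, 1, \ldots, r_p\}$.

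First I would carry out the local rank-change computation for a single such swap: checking cases on the position of $(i,j)$ relative to $\{a, a+1\}$ shows that replacing $1$s at $(a,a), (a+1,a+1)$ by $1$s at $(a,a+1), (a+1,a)$ decreases $r_{a,a}$ by exactly $1$ and leaves every other $r_{i,j}$ unchanged. Aggregating these local changes over all swaps in all diagonal blocks, I obtain $r_{i,j}(v^\epsilon(W)) \leq r_{i,j}(v(W))$ for all $(i,j)$, with equality unless $(i,j) = (i_p + 2k, i_p + 2k)$ for some diagonal block $p$ and some $k \in \{0, 1, \ldots, r_p\}$; at these \emph{problematic positions}, the difference is exactly $1$. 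The $(\Leftarrow)$ direction of the lemma now follows trivially.

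For the $(\Rightarrow)$ direction, the only nontrivial content is at a problematic position $a = i_p + 2k$, and the key claim is that $r_{a,a}(v(W))$ is odd. To verify this, I would decompose the $1$s of the symmetric permutation matrix $v(W)$ lying in $[1, a] \times [1, a]$ into fixed points (contributing $1$ each) and off-diagonal symmetric pairs (contributing $2$ each). Since $v(W) \in \mathcal{W}^P$, every diagonal $1$ of $v(W)$ lies inside a matrix diagonal block; each diagonal block whose diagonal $1$s are entirely contained in $[1, a]$ contributes an even number of fixed points because $v(W) \in \mathcal{J}$, whereas the current partial block $p$ contributes exactly $2k + 1$ fixed points when truncated at row $a = i_p + 2k$. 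Thus the total number of fixed points in $[1, a]$ is odd, and hence $r_{a,a}(v(W))$ is odd. On the other hand, since $\epsilon = -1$, the submatrix $M_{[1,a] \times [1,a]}$ is skew-symmetric and therefore of even rank, so $r_{a,a}(M)$ is even. An even integer $r_{a,a}(M)$ bounded above by an odd integer $r_{a,a}(v(W))$ is automatically at most $r_{a,a}(v(W)) - 1 = r_{a,a}(v^\epsilon(W))$, which completes the argument.

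The hard part will be the bookkeeping in the local rank-change computation and in the parity count of fixed points; both are elementary but require careful use of the block structure of $\mathcal{J}$.
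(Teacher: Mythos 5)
Your proposal is correct and follows essentially the same route as the paper's proof: both rest on the observation that converting each adjacent diagonal pair of $1$s to an antidiagonal pair decreases exactly one northwest rank, $r_{a,a}$, by $1$, and that at these positions $r_{a,a}(v(W))$ is odd (by the involution/block structure of $\mathcal{J}$) while $r_{a,a}(M)$ is even by skew-symmetry of $M$. The only difference is organizational: the paper factors $v^\epsilon(W)=s_{j_m}\cdots s_{j_1}v(W)$ and compares one transposition at a time through the intermediate matrices, whereas you aggregate all swaps at once and verify the odd-parity claim by a direct fixed-point count in $v(W)$, which is a fine (arguably slightly cleaner) way to package the same argument.
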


\begin{proof}
If $v^\epsilon(W) = v(W)$ then there is nothing to show. So assume otherwise.

Let $s_i\in \mathfrak{S}_{2d}$ denote the simple transposition swapping $i$ and $i+1$. By Lemma \ref{lem:onesPositions} and the discussion following the definition of $\mathcal{J}$ in Section \ref{sec:Zpermoverview}, we may assume that there are $j_1, j_2, \ldots, j_m$ such that the locations of the $1$s in the (non-symmetric) bipartite Zelevinsky permutation $v(W)$ are $(j_1,j_1), (j_1+1,j_1+1)$, $(j_2,j_2), (j_2+1,j_2+1)$, $\dots$, $(j_m,j_m),(j_m+1,j_m+1)$. We assume that $j_k+1<j_{k+1},$ for each $1\leq k\leq m-1$, so that $v(W)$ has $2m$-many $1$s in positions along the diagonal.
Then, by the construction of $v^\epsilon(W)$ from $v(W)$, we see that 
\[v^\epsilon(W) =  s_{j_m}\cdots s_{j_1}v(W).\]

Observe that $r_{i,j}(v(W))$ and $r_{i,j}(s_{j_1}v(W))$ are equal for all $1\leq i,j\leq 2d$ except $(i,j) = (j_1,j_1)$. Precisely, $r_{j_1,j_1}(s_{j_1}v(W)) = r_{j_1,j_1}(v(W))-1$. 

Now, if $M$ is a skew-symmetric matrix, then so is the northwest $j_1 \times j_1$ submatrix of $M$, and  hence that submatrix has even rank. 
Additionally, since the northwest-most $1$ that appears along the diagonal in $v(W)$ appears in location $(j_1,j_1)$, and $v(W)$ is an involution, we observe that $r_{j_1,j_1}(v(W))$ is odd. 
Consequently, $r_{j_1,j_1}(M)\leq r_{j_1,j_1}(v(W))$ if and only if $r_{j_1,j_1}(M)\leq r_{j_1,j_1}(s_{j_1}v(W))$. Thus, $r_{i,j}(M)\leq r_{i,j}(v(W))$ for all $1\leq i,j\leq 2d$ if and only if $r_{i,j}(M)\leq r_{i,j}(s_{j_1}v(W))$ for all $1\leq i,j\leq 2d$.

A similar argument shows that, for each $1\leq k\leq m-1$, we have $r_{i,j}(M)\leq r_{i,j}(s_{j_k}\cdots s_{j_1}v(W))$ for all $1\leq i,j\leq 2d$ if and only if $r_{i,j}(M)\leq r_{i,j}(s_{j_{k+1}}\cdots s_{j_1}v(W))$ for all $1\leq i,j\leq 2d$. 

The desired result now follows.
\end{proof}

 Let $P\leq GL(2d)$ denote the parabolic subgroup of block lower triangular matrices with block sizes $\bd(y_0),$ $ \bd(y_1), \dots,$ $ \bd(y_{n-1}), $ $\bd(x_n),\dots,$ $ \bd(x_2),\bd(x_1)$ from northwest to southeast down the diagonal. Let $\text{Mat}^\circ_{\epsilon}(2d,2d)$ denote the group of invertible symmetric matrices of size $2d\times 2d$ when $\epsilon = 1$, and the group of invertible skew-symmetric matrices of size $2d\times 2d$ when $\epsilon = -1$. We have the following symmetric quiver analogue of Proposition \ref{prop:zpermProperties}.

\begin{proposition}\label{prop:symmZpermEssentialBox}
Let $W\in \srep_Q(\bd)$. The symmetric bipartite Zelevinsky permutation $v^\epsilon(W)$ satisfies the following properties:
\begin{enumerate}[(i)]
\item ${\bar r}_{p,q}(\zeta^\epsilon(W)) = {\bar r}_{p,q}(v^\epsilon(W))$, for all $1\leq p,q\leq 2n.$
\item $\overline{B_-\cdot [v^{\epsilon}(W)]_{\epsilon}} = \overline{P\cdot [v^\epsilon(W)]_\epsilon}$, where the orbit closure is taken in $\text{Mat}^\circ_{\epsilon}(2d,2d)$. 
\end{enumerate}
\end{proposition}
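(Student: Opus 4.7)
The plan is to prove the two items in sequence, using (i) as a bridge in (ii).

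For (i) I would assemble the chain
\begin{equation*}
\bar r_{p,q}\bigl(\zeta^\epsilon(W)\bigr)
=\bar r_{p,q}\bigl(\zeta(W)\bigr)
=\bar r_{p,q}\bigl(v(W)\bigr)
=\bar r_{p,q}\bigl(v^\epsilon(W)\bigr).
\end{equation*}
The first equality holds because $\zeta^\epsilon(W)$ is obtained from $\zeta(W)$ by multiplying certain rows and columns by $\epsilon\in\{\pm 1\}$, a rank-preserving operation. The second is Proposition~\ref{prop:zpermProperties}(i). The third is trivial when $\epsilon=+1$; when $\epsilon=-1$, the map $\alpha$ replaces pairs of $1$s at consecutive diagonal positions $(\ell,\ell),(\ell+1,\ell+1)$ by $1$s at $(\ell,\ell+1),(\ell+1,\ell)$, and the critical observation is that $\ell$ and $\ell+1$ always lie in the \emph{same} diagonal block of the ambient block decomposition (by construction of $\mathcal{J}$ and $\alpha$). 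Since $p,q$ index block boundaries (elements of $\mathcal{C}$), each northwest $p\times q$ block submatrix contains either both positions of such a swapped pair or neither, so $\alpha$ cannot alter $\bar r_{p,q}$.

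For (ii), the inclusion $\overline{B_-\cdot[v^\epsilon(W)]_\epsilon}\subseteq\overline{P\cdot[v^\epsilon(W)]_\epsilon}$ is automatic from $B_-\leq P$. For the reverse direction I invoke Proposition~\ref{prop:POrbitNWRanks}(ii) to write
\begin{equation*}
\overline{P\cdot[v^\epsilon(W)]_\epsilon}
=\bigl\{M\in\Mat_\epsilon^\circ(2d,2d)\,\bigm|\,r_{i,j}(M)\leq r_{i,j}\bigl([v^\epsilon(W)]_\epsilon\bigr)\ \text{for all}\ (i,j)\in\mathcal{C}\times\mathcal{C}\bigr\}.
\end{equation*}
Because $[v^\epsilon(W)]_\epsilon$ has at most one nonzero entry per row and per column, its northwest ranks coincide with those of $v^\epsilon(W)$; and the block-boundary argument from step three of (i) gives $r_{i,j}(v^\epsilon(W))=r_{i,j}(v(W))$ at every $(i,j)\in\mathcal{C}\times\mathcal{C}$. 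By Proposition~\ref{prop:zpermProperties}(ii) we have $\mathcal{E}(v(W))\subseteq\mathcal{C}\times\mathcal{C}$, so Fulton's essential-set reduction \cite{Ful92} upgrades the block-corner rank bounds on any such $M$ to the bounds $r_{i,j}(M)\leq r_{i,j}(v(W))$ for \emph{every} $(i,j)$.

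When $\epsilon=+1$, $v^\epsilon(W)=v(W)$, and Proposition~\ref{prop:matrixNWranks}(ii) converts the bounds just obtained into membership $M\in\overline{B_-\cdot[v^\epsilon(W)]_\epsilon}$, completing (ii) in this case. The main obstacle is the case $\epsilon=-1$, where $v^\epsilon(W)=\alpha(v(W))$ and the rank functions of $v(W)$ and $v^\epsilon(W)$ genuinely differ at certain interior positions $(i,j)\notin\mathcal{C}\times\mathcal{C}$. I would dispatch this step by invoking Lemma~\ref{lem:skewSymRankComparison}, which asserts precisely that for any $M\in\Mat_{-}^\circ(2d,2d)$ the two systems of rank inequalities, one with respect to $v(W)$ and one with respect to $v^\epsilon(W)$, are equivalent. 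Combined once more with Proposition~\ref{prop:matrixNWranks}(ii), this yields $M\in\overline{B_-\cdot[v^\epsilon(W)]_\epsilon}$ and completes the proof.
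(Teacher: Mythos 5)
Your proposal is correct and follows essentially the same route as the paper: part (i) via the same chain of block-rank equalities through Proposition \ref{prop:zpermProperties}(i), and part (ii) via Proposition \ref{prop:POrbitNWRanks}, the block-corner location of the essential set from Proposition \ref{prop:zpermProperties}(ii), Fulton's essential-set reduction, Lemma \ref{lem:skewSymRankComparison} for $\epsilon=-1$, and Proposition \ref{prop:matrixNWranks}. Your only deviation is handling the passage between $v(W)$ and $v^\epsilon(W)$ at block corners by the direct observation that $\alpha$ only moves $1$s within diagonal blocks (where the paper invokes Lemma \ref{lem:skewSymRankComparison} a second time), which is a harmless, if anything slightly cleaner, variant of the same argument.
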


\begin{proof}
To prove (i), note that
\[{\bar r}_{p,q}(\zeta^\epsilon(W))= {\bar r}_{p,q}(\zeta(W)), \quad \text{ and } \quad {\bar r}_{p,q}(v^\epsilon(W)) = {\bar r}_{p,q}(v(W)),\] for each $1\leq p,q\leq 2n$. Thus, (i) follows from Proposition \ref{prop:zpermProperties} (i).

We next prove (ii). Since the base field $\kk$ is algebraically closed and both orbit closures are algebraic varieties, it suffices to check that the sets of $\kk$-points agree. First, since $B_-\leq P$, we have $\overline{B_-\cdot [v^{\epsilon}(W)]_{\epsilon}} \subseteq \overline{P\cdot [v^\epsilon(W)]_\epsilon}$. So 
suppose that $M \in \overline{P\cdot [v^\epsilon(W)]_\epsilon}$. We wish to show that $M\in \overline{B_-\cdot [v^{\epsilon}(W)]_\epsilon}$. By Proposition \ref{prop:POrbitNWRanks}, we have ${\bar r}_{p,q}(M)\leq {\bar r}_{p,q}(v^\epsilon(W))$ for each $1\leq p, q \leq 2n$. Noting that $v(W) = v^\epsilon(W)$ (when $\epsilon = 1$) or applying Lemma \ref{lem:skewSymRankComparison} (when $\epsilon = -1$), we have that ${\bar r}_{p,q}(M)\leq {\bar r}_{p,q}(v(W))$ for each $1\leq p, q \leq 2n$. 

By Proposition \ref{prop:zpermProperties} (ii), each element of the essential set, $\mathcal{E}(v(W))$, is supported in the southeast corner of a block. Hence $r_{i,j}(M)\leq r_{i,j}(v(W))$, for $(i,j)\in \mathcal{E}(v(W))$, and hence $r_{i,j}(M)\leq r_{i,j}(v(W))$, for all $1\leq i,j\leq 2d$ by \cite[Lemma 3.10]{Ful92}. 
Noting once more that $v(W) = v^\epsilon(W)$ (when $\epsilon = 1$) or applying Lemma \ref{lem:skewSymRankComparison} (when $\epsilon = -1$), we have $r_{i,j}(M)\leq r_{i,j}(v^\epsilon(W))$ for all $1\leq i,j\leq 2d$.
Hence $M\in \overline{B_-\cdot [v^{\epsilon}(W)]_\epsilon}$ by Proposition \ref{prop:matrixNWranks}.
\end{proof}

We are now ready to prove Theorem \ref{thm:mainZperm1}.

\begin{proof}[Proof of Theorem \ref{thm:mainZperm1}]
Let $W\in \srep_Q(\bd)$. 
Let $\mathscr{O} := \overline{G(\bd)\cdot W}\subseteq \srep_Q(\bd)$.  
First recall that $\zeta^\epsilon$ is a closed immersion which restricts to an isomorphism from $\mathscr{O}$ to $\zeta^\epsilon(\mathscr{O})$. Thus, we have that $\overline{\zeta^\epsilon(G(\bd)\cdot W)} = \zeta^\epsilon(\overline{G(\bd)\cdot W}) = \zeta^\epsilon(\mathscr{O})$. 
Next recall from the proof of Theorem \ref{thm:mainTheorem} 
that $\mathscr{O}^{\ddagger}:=\overline{P\cdot \zeta^\epsilon(\mathscr{O})}\subset\Mat^\circ_\epsilon(2d,2d)$. Then, 
\[
\mathscr{O}^\ddagger = \overline{P\cdot\zeta^\epsilon(\mathscr{O})}=
\overline{P\cdot\zeta^\epsilon(G(\bd)\cdot W)}=
\overline{P\cdot \zeta^\epsilon(W)}.
\]
where the second equality follows from the equivariance of $\zeta^\epsilon$ with respect to the embedding ${\bar \xi}$ in \eqref{eq:GdtoGL} of $G(\bd)$ into $P$.

Thus we have
\begin{equation}\label{eq:ZpermThm1eq2}
\mathscr{O}^\ddagger= \overline{P\cdot \zeta^\epsilon(W)} = \overline{P\cdot [v^\epsilon(W)]_\epsilon} = \overline{B\cdot [v^\epsilon(W)]_\epsilon},\end{equation}
where the middle equality follows from Proposition \ref{prop:symmZpermEssentialBox} (i) together with Proposition \ref{prop:POrbitNWRanks}, and the rightmost equality follows from Proposition \ref{prop:symmZpermEssentialBox} (ii).

Uniqueness of $v^\epsilon(W)$ follows from the $B_-$-orbit closure parametrization in $\Mat_\epsilon^\circ(2d,2d)$ given in Proposition \ref{prop:BOrbitsMatrixSpaceInvolutions}: as recalled also at the beginning of this section, if $O\subset \Mat_\epsilon^\circ(2d,2d)$ is a $B_-$-orbit closure, then there exists a unique $u\in I^\epsilon$ such that $O=\overline{B\cdot [u]_\epsilon}$. \end{proof}

\begin{corollary}\label{cor:ordRevInj}
    $v^\epsilon(\cdot)$ is an order-reversing and injective map from the set of orbit closures in $\srep_Q(\bd)$, partially ordered by inclusion, to $\mathfrak{S}_{2d}$, partially ordered by Bruhat order. 
\end{corollary}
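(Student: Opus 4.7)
The plan is to observe that Corollary \ref{cor:ordRevInj} is a direct consequence of assembling three results already proved: Theorem \ref{thm:mainZperm1}, Theorem \ref{thm:mainTheorem}(i), and Proposition \ref{prop:BOrbitsMatrixSpaceInvolutions}. By Theorem \ref{thm:mainZperm1}, the symmetric Zelevinsky permutation $v^\epsilon(\mathscr{O})$ is by construction the unique element of $I^\epsilon$ such that $\mathscr{O}^\ddagger = \overline{B_-\cdot[v^\epsilon(\mathscr{O})]_\epsilon}$. Consequently, I would factor $v^\epsilon(\cdot)$ as the composition
\[
\mathscr{O}\ \longmapsto\ \mathscr{O}^\ddagger\ \longmapsto\ v^\epsilon(\mathscr{O}),
\]
where the first arrow lands in the poset of $B_-$-orbit closures in $\Mat_\epsilon^\circ(2d,2d)$ (using that every $P$-orbit closure is automatically a $B_-$-orbit closure, since $B_-\leq P$ forces the $P$-orbit to contain a unique dense $B_-$-orbit with the same closure), and the second arrow is the inverse of the Richardson--Springer correspondence from Proposition \ref{prop:BOrbitsMatrixSpaceInvolutions}.

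Next I would verify the monotonicity of each arrow separately. Theorem \ref{thm:mainTheorem}(i) states that $\mathscr{O}\mapsto\mathscr{O}^\dagger$ is injective and order-preserving. Since $\iota_\epsilon$ is a $G$-equivariant isomorphism (hence $B_-$-equivariant), it induces a poset isomorphism between $B_-$-orbit closures in $G/K$ and those in $\Mat_\epsilon^\circ(2d,2d)$, so the composed map $\mathscr{O}\mapsto\mathscr{O}^\ddagger$ inherits both injectivity and the inclusion-preserving property. Proposition \ref{prop:BOrbitsMatrixSpaceInvolutions} then gives that the second arrow $\mathscr{O}^\ddagger\mapsto v^\epsilon(\mathscr{O})$ is an order-reversing bijection between the poset of $B_-$-orbit closures in $\Mat_\epsilon^\circ(2d,2d)$ and $I^\epsilon\subseteq \mathfrak{S}_{2d}$ (with Bruhat order).

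Finally, composing an injective order-preserving map with an order-reversing bijection produces an injective order-reversing map, which is exactly the conclusion of the corollary. I do not anticipate any genuine obstacle in this argument; the content has been fully prepared by the preceding sections, and the only minor verification required is the observation that every $P$-orbit closure is also a $B_-$-orbit closure, which is immediate from $B_-\leq P$.
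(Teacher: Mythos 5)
Your proposal is correct and follows essentially the same route as the paper: injectivity comes from Theorem \ref{thm:mainZperm1} (via the injectivity of $\mathscr{O}\mapsto\mathscr{O}^\ddagger$ from Theorem \ref{thm:mainTheorem}(i)), and order-reversal comes from composing the order-preserving map $\mathscr{O}\mapsto\mathscr{O}^\ddagger$ with the order-reversing Bruhat correspondence of Proposition \ref{prop:BOrbitsMatrixSpaceInvolutions}. (One cosmetic point: the second arrow $\mathscr{O}^\ddagger\mapsto v^\epsilon(\mathscr{O})$ is the Richardson--Springer bijection $\mathcal{T}^\epsilon\to I^\epsilon$ itself rather than its inverse, which changes nothing since both are order-reversing.)
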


\begin{proof}
    By Theorem \ref{thm:mainZperm1}, $v^\epsilon(\cdot)$ is injective. 
    To see that it is order reversing, let $W_1, W_2\in \text{srep}_Q(\bd)$. 
    Let $\mathscr{O}_1 = \overline{G(\bd)\cdot W_1}$ and let $\mathscr{O}_2 = \overline{G(\bd)\cdot W_2}$.
    Since
    $\mathscr{O}^\ddagger= \overline{B\cdot [v^\epsilon(W_i)]_\epsilon}$, $i\in\{1,2\}$, we see that $\mathscr{O}_1\subseteq \mathscr{O}_2$ in $\text{srep}_Q(\bd)$ if and only if $\overline{B\cdot [v^\epsilon(W_1)]_\epsilon}\subseteq \overline{B\cdot [v^\epsilon(W_2)]_\epsilon}$, which is true if and only if $v^\epsilon(W_1)\geq v^\epsilon(W_2)$ in Bruhat order.
\end{proof}

\subsection{Image of $v^\epsilon(\cdot)$}\label{sect:mainZperm2}

Throughout this section we let $Q$ be a bipartite type $A$ quiver and let $\bd$ be a symmetric dimension for $Q$. 
As in Section \ref{sec:Zpermoverview}, let $\mathcal{W}_P := \mathfrak{S}_{\bd(y_0)}\times \mathfrak{S}_{\bd(y_1)}\times \cdots \times \mathfrak{S}_{\bd(y_{n-1})}\times \mathfrak{S}_{\bd(x_n)}\times \cdots \times \mathfrak{S}_{\bd(x_2)}\times\mathfrak{S}_{\bd(x_1)}\leq \mathfrak{S}_{2d}$.
and let $\mathcal{W}^P$ denote the set of minimal length representatives of double cosets in $\mathcal{W}_P\backslash\mathfrak{S}_{2d}/\mathcal{W}_P$.

\begin{proposition}\label{prop:bruhat}\cite[Thm. 4.12 (3)]{KR15}
Let $Z$ be an element of the open orbit in $\rep_Q(\bd)$. Let $0\in \rep_Q(\bd)$ denote the representation with all matrix entries being zeros. 
    There is an inclusion reversing bijection between the poset of orbit closures in $\rep_Q(\bd)$, partially ordered by inclusion, and  $[v(Z), v(0)]\cap \mathcal{W}^P$ partially ordered by Bruhat order. This bijection is given by:
    \begin{equation}\label{eq:ZmaptoPerm}
    \overline{GL(\bd)\cdot W}\mapsto v(W).
    \end{equation}
\end{proposition}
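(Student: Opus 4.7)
The plan is to verify in turn that the assignment $\overline{GL(\bd)\cdot W}\mapsto v(W)$ is well defined with image in $\mathcal{W}^P$, is injective and order reversing, and has image equal to $[v(Z),v(0)]\cap\mathcal{W}^P$. Well-definedness is immediate from the ``furthermore'' clause of Proposition \ref{prop:zperm}, which says that $v$ is a complete $GL(\bd)$-orbit invariant. The containment $v(W)\in\mathcal{W}^P$ follows from conditions (ii) and (iii) of that proposition, which state that the $1$s of $v(W)$ advance from northwest to southeast in every block row and column; this is exactly the Bruhat-theoretic characterization of the minimal length representative of a $(\mathcal{W}_P,\mathcal{W}_P)$ double coset in $\mathfrak{S}_{2d}$.

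For the order-reversal and injectivity, I would chain together the standard characterizations of orbit closure containment for ordinary type $A$ quivers: namely, $\overline{GL(\bd)\cdot W_1}\subseteq\overline{GL(\bd)\cdot W_2}$ if and only if $r_J(W_1)\le r_J(W_2)$ for all intervals $J\subseteq Q$ (the non-symmetric analog of Proposition \ref{prop:quiverranksorbits}, due to Abeasis--Del Fra), with the translation to northwest block ranks of the Zelevinsky-embedded matrices via \cite[Lem.~A.3]{KR15}, and Proposition \ref{prop:zpermProperties}, which identifies these block ranks with the northwest block ranks of the permutation matrix $v(W)$ and places the essential set of $v(W)$ at block corners. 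Using \cite[Lem.~3.10]{Ful92} to propagate block-rank inequalities to all northwest rank inequalities and Fulton's Bruhat criterion \cite{Ful92} finishes the step: the inequalities between rank functions translate to $v(W_1)\ge v(W_2)$ in Bruhat order, and running the equivalence in both directions yields injectivity as a bonus.

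For the identification of the image, one containment is automatic from step 2: the unique closed orbit $\{0\}$ maps to the Bruhat-maximum $v(0)$, the unique dense orbit $GL(\bd)\cdot Z$ maps to the Bruhat-minimum $v(Z)$, and any other orbit closure maps strictly between these, so the image lies inside $[v(Z),v(0)]\cap\mathcal{W}^P$. The main obstacle, and essentially the only serious work in the proof, is the converse: given $u\in[v(Z),v(0)]\cap\mathcal{W}^P$, producing an actual representation $W$ with $v(W)=u$. My plan is to use the formula \eqref{eq:nwbr} to read off from $u$ a candidate tuple of northwest block ranks $(\bar r_{p,q}(u))$, convert these into prescribed interval ranks $r_J$ for every $J\subseteq Q$ using Proposition \ref{prop:zpermProperties}(ii) together with \cite[Lem.~3.10]{Ful92}, and then realize these ranks by an explicit representation built as a direct sum of interval indecomposables via a rank-to-lace argument in the spirit of \cite[Lem.~3.4]{KR15}. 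The two bounds $v(Z)\le u\le v(0)$ should be precisely what guarantees that the resulting interval multiplicities are nonnegative integers and that the total dimension at each vertex equals $\bd$, so that the construction produces a genuine representation in $\rep_Q(\bd)$ with $v(W)=u$; verifying this compatibility, rather than anything in steps 1 or 2, is where I expect all the real difficulty to sit.
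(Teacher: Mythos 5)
First, a point of comparison: the paper does not prove this proposition at all --- it is imported verbatim from \cite[Thm.~4.12(3)]{KR15} --- so the only ``in-paper proof'' is the citation, and your attempt has to be measured against the original argument in \cite{KR15} rather than against anything in this text. With that caveat, your outline runs along essentially the same lines as the source. Steps 1 and 2 are sound: the ``furthermore'' clause of Proposition~\ref{prop:zperm} gives well-definedness, conditions (ii)--(iii) of that proposition do characterize the minimal-length $(\mathcal{W}_P,\mathcal{W}_P)$ double coset representatives, and the chain of equivalences (interval ranks \`a la Abeasis--Del Fra) $\leftrightarrow$ (northwest block ranks via \cite[Lem.~A.3]{KR15}) $\leftrightarrow$ (ranks of $v(W)$ via Proposition~\ref{prop:zpermProperties}, propagated from the essential set by \cite[Lem.~3.10]{Ful92}) $\leftrightarrow$ (Bruhat order via Fulton's criterion) is exactly the mechanism used both in \cite{KR15} and, in the symmetric setting, in Proposition~\ref{prop:symmZpermEssentialBox} and Theorem~\ref{thm:mainZperm1}; injectivity and order-reversal do fall out of running it in both directions.

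The gap is the surjectivity onto $[v(Z),v(0)]\cap\mathcal{W}^P$, which you correctly flag as the crux but leave as a plan. Two comments on that plan. Nonnegativity of the candidate multiplicities is not where the difficulty sits: for $u\in\mathcal{W}^P$ the alternating sum \eqref{eq:nwbr}, computed from the block ranks of $u$ itself, is literally the number of $1$s of $u$ in block $(p,q)$, hence automatically a nonnegative integer, so the rank-to-lace inversion costs nothing. What the hypotheses $v(Z)\le u\le v(0)$ actually have to deliver is structural: at the block corners whose rank conditions cut out the image of the Zelevinsky map (\cite[Lem.~A.2]{KR15}), the permutations $v(Z)$ and $v(0)$ have equal northwest ranks, so the two-sided Bruhat squeeze pins the ranks of $u$ there, forcing full ``identity'' blocks and confining the remaining $1$s of $u$ to exactly those blocks that index honest intervals of $Q$; only then does the direct sum of interval indecomposables with these block counts have dimension vector $\bd$ and Zelevinsky permutation $u$. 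That verification --- not nonnegativity --- is the entire content of the image statement, so as written your proposal is an accurate road map to the proof in \cite{KR15} rather than a complete argument.
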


We will need one more lemma to prove Theorem \ref{thm:mainZperm2}. Recall from Section \ref{sec:Zpermoverview} that $\mathcal{A}_{+} = [v^\epsilon_{\text{max}},v_\square]\cap \mathcal{W}^P\cap I^{+}$.

\begin{lemma}\label{lem:lemmaForZperm2}
Let $u\in \mathcal{A}_+$. 
\begin{enumerate}[(i)]
    \item Then there is some $W\in \srep_Q(\bd)$, with $\epsilon = +1$, such that $u = v(W)$.
    \item If additionally $u\in \mathcal{A}_+$ has an even number of $1$s in each block diagonal, then there is some $W\in \srep_Q(\bd)$, with $\epsilon = -1$ such that $u = v(W)$.
\end{enumerate}
\end{lemma}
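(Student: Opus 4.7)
The plan is to reduce to the non-symmetric case via Proposition~\ref{prop:bruhat}, then promote the resulting representation to a symmetric one using Proposition~\ref{prop:regtosym}. The first step is common to both parts: given $u$ as in the hypothesis, exhibit $V \in \rep_Q(\bd)$ with $v(V) = u$. I would verify $v_\square = v(0)$ by a direct computation of the bipartite Zelevinsky permutation of the zero representation; the only nonzero blocks of $\zeta(0)$ are the two identity matrices, which via Proposition~\ref{prop:zperm} produce exactly the involution $i \leftrightarrow d+i$. Next, since the open $G(\bd)$-orbit in $\srep_Q(\bd)$ is, by Theorem~\ref{thm:DWorbits}, a single $GL(\bd)$-orbit in $\rep_Q(\bd)$, its bipartite Zelevinsky permutation $v(\mathscr{O}_{\max})$ satisfies $v(\mathscr{O}_{\max}) \ge v(Z)$ in Bruhat order for $Z$ in the open $GL(\bd)$-orbit. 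Hence $u \in \mathcal{A}_+ \subseteq [v(Z), v(0)] \cap \mathcal{W}^P$, and Proposition~\ref{prop:bruhat} produces $V \in \rep_Q(\bd)$ with $v(V) = u$.

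For part (i), with $\epsilon = +1$, I would verify the hypothesis of Proposition~\ref{prop:regtosym}(i), namely $r_J(V) = r_{\tau J}(V)$ for every interval $J \subseteq Q$. By Proposition~\ref{prop:zpermProperties}(i), the block ranks of $\zeta(V)$ agree with those of $u$. Since $\bd$ is symmetric, the identity $\bd(y_j) = \bd(x_{n-j})$ makes the row and column block-size sequences of $\zeta(V)$ coincide, so the row and column partial sums agree: $P_p = Q_p$ for all $p$. The hypothesis $u = u^t$ then gives $\bar{r}_{p,q}(u) = \bar{r}_{q,p}(u)$. Furthermore, the block column ordering $x_n,\ldots,x_1,y_0,\ldots,y_{n-1}$ of $\zeta(V)$ is obtained from its block row ordering by applying $\tau$ termwise (since $\tau(y_{p-1}) = x_{n+1-p}$), so transposition of block indices corresponds precisely to the involution $\tau$ on quiver intervals. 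Combined with the correspondence between quiver-interval ranks of $V$ and northwest block ranks of $\zeta(V)$ (as in \cite[Lem.~A.3]{KR15}), this yields $r_J(V) = r_{\tau J}(V)$ for all $J$. Proposition~\ref{prop:regtosym}(i) then produces $W \in \srep_Q(\bd)$ with $\epsilon = +1$ and $W \in GL(\bd)\cdot V$; constancy of $v$ on $GL(\bd)$-orbits gives $v(W) = u$.

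For part (ii), with $\epsilon = -1$, the evenness hypothesis on $u$ is used to additionally guarantee $r_J(V)$ is even for each $\tau$-fixed interval $J$, as required by Proposition~\ref{prop:regtosym}(ii). A $\tau$-fixed interval corresponds to a diagonal block rank $\bar{r}_{p,p}(\zeta(V)) = \bar{r}_{p,p}(u)$ via the dictionary above. For the involution $u$, the number of $1$s in the northwest $P_p \times P_p$ submatrix decomposes as the number of fixed points of $u$ in $[1,P_p]$ plus twice the number of $2$-cycles of $u$ entirely within $[1,P_p]$, so $\bar{r}_{p,p}(u)$ has the same parity as this fixed-point count. The hypothesis that each diagonal block of $u$ contains an even number of $1$s makes the total number of fixed points of $u$ in $[1,P_p]$ a sum of even numbers, hence even. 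Proposition~\ref{prop:regtosym}(ii) then yields $W \in \srep_Q(\bd)$ with $\epsilon = -1$ and $v(W) = u$.

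The main obstacle is the combinatorial translation between block-rank identities for $u$ (its matrix symmetry and diagonal-block parity) and the corresponding interval-rank identities for $V$ (symmetry under $\tau$ and parity for $\tau$-fixed intervals). Because the block row and column labellings of $\zeta(V)$ are related by interchanging each vertex with its $\tau$-image, the translation requires careful bookkeeping to match each pair $(p,q)$ of block indices with the correct quiver interval and its $\tau$-image; this is essentially the non-symmetric dictionary underlying \cite[Lem.~A.3]{KR15} adapted to the symmetric setting.
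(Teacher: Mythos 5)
Your proposal is correct and follows essentially the same route as the paper: use Proposition \ref{prop:bruhat} to produce an ordinary representation $V$ with $v(V)=u$, translate the matrix symmetry (and, for part (ii), the diagonal-block parity) of $u$ into the interval-rank conditions $r_J(V)=r_{\tau J}(V)$ (and evenness for $\tau$-fixed $J$) via the block-rank dictionary of \cite[Lem.~A.3]{KR15}, and then apply Proposition \ref{prop:regtosym}. The extra details you supply (checking $v(0)=v_\square$, that $\mathcal{A}_+\subseteq[v(Z),v(0)]\cap\mathcal{W}^P$, and the fixed-point parity count) are exactly the points the paper leaves implicit, and they check out.
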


\begin{proof}
Since $u\in \mathcal{A}_+$, Proposition \ref{prop:bruhat} implies that $u = v(V)$ for some (ordinary) bipartite type $A$ representation $V\in \rep_Q(\bd)$.  

Since $u = v(V)$ is an involution, \cite[Lemma A.3]{KR15} implies that
$r_{J}(V) = r_{\tau J}(V)$, 
for all intervals $J\subseteq Q$. Item (i) now follows from Proposition \ref{prop:regtosym}.

For (ii), assume that $u = v(V)$ has an even number of $1s$ down each block diagonal. Then, since $u$ is an involution, we conclude that each northwest block rank ${\bar r}_{p,p}(v)$ for each $1\leq p\leq 2n$ is even. Finally, using 
\cite[Lemma A.3]{KR15}, we see that $r_{J}(V)$ is even for each interval $J$ such that $J = \tau J$. Applying item (ii) of Proposition \ref{prop:regtosym} completes the proof.
\end{proof}

We are now ready to prove Theorem \ref{thm:mainZperm2}.

\begin{proof}[Proof of Theorem \ref{thm:mainZperm2}]
The order reversing statement is Corollary \ref{cor:ordRevInj}, so it remains to describe the image.
We first prove the result for $\epsilon = +1$. To show that the image of $v^\epsilon$ is equal to the set $\mathcal{A}_{+} = [v^\epsilon_{\text{max}},v_\square]\cap \mathcal{W}^P\cap I^{+}$, we prove both containments. 
Let $u\in \mathcal{A}_{+}$, then $u = v(W)$ for some $W\in \srep_Q(\bd)$ by Lemma \ref{lem:lemmaForZperm2}. For the other inclusion, let $W\in \srep_Q(\bd)$. By Proposition \ref{prop:bruhat}, $v(W)\in [v^\epsilon_{\text{max}},v_\square]\cap \mathcal{W}^P$ and by Proposition \ref{prop:symZperm123} $v(W)$ is an involution. This completes the proof for $\epsilon=+1$ since $v(W)=v^\epsilon(W)$.

We now prove the result for $\epsilon = -1$. That is,  we will show that the image of $v^\epsilon$ is equal to the set $\mathcal{A}_- = \{\alpha(u)\mid u\in [v^\epsilon_{\text{max}},v_\square]\cap \mathcal{J}\}$ where the set $\mathcal{J}$ and the map $\alpha$ are as defined in Section \ref{sec:Zpermoverview}. 
First suppose that $w\in \mathcal{A}_-$. Then, $\alpha^{-1}(w)$ is an element of $\mathcal{A}_+$ and has an even number of $1$s in each block diagonal. Thus, but Lemma \ref{lem:lemmaForZperm2}, part (ii), there is a $W\in \srep_Q(\bd)$ (with $\epsilon = -1$) such that $\alpha^{-1}(w) = v(W)$. Hence $v^\epsilon(W) = \alpha(\alpha^{-1}(w)) = w$. Next suppose that $W\in \srep_Q(\bd)$. By Proposition \ref{prop:bruhat}, $v(W)\in [v^\epsilon_{\text{max}},v_\square]\cap \mathcal{W}^P$ and by Lemma \ref{lem:onesPositions}, $v(W)\in \mathcal{J}$. Thus, $\alpha(v(W))\in \mathcal{A}_-$. Since $v^\epsilon(W) = \alpha(v(W))$, we have $v^\epsilon(W)\in \mathcal{A}_-$.
\end{proof}

\bibliographystyle{alpha} 
\bibliography{symmetricquivers}

\end{document}